\newcommand{\bE}{\ensuremath{\mathbb{E}}}
\newcommand{\bH}{\ensuremath{\mathbb{H}}}
\newcommand{\bN}{\ensuremath{\mathbb{N}}}
\newcommand{\bP}{\ensuremath{\mathbb{P}}}
\newcommand{\bQ}{\ensuremath{\mathbb{Q}}}
\newcommand{\bR}{\ensuremath{\mathbb{R}}}
\newcommand{\bZ}{\ensuremath{\mathbb{Z}}}
\newcommand{\ind}{\ensuremath{\mathbbm{1}}}
\newcommand{\cA}{\ensuremath{\mathcal{A}}}
\newcommand{\cC}{\ensuremath{\mathcal{C}}}
\newcommand{\cF}{\ensuremath{\mathcal{F}}}
\newcommand{\cL}{\ensuremath{\mathcal{L}}}
\newcommand{\abs}[1]{\left\vert \, #1 \, \right\vert}
\newcommand{\norm}[1]{\left\Vert \, #1 \, \right\Vert}
\newcommand{\ddx}[1][1]{\ifnum#1=1 \frac{d}{dx} \else \frac{d^{#1}}{dx^{#1}} \fi}
\newcommand{\ddy}[1][1]{\ifnum#1=1 \frac{d}{dy} \else \frac{d^{#1}}{dy^{#1}} \fi}
\newcommand{\ddt}[1][1]{\ifnum#1=1 \frac{d}{dt} \else \frac{d^{#1}}{dt^{#1}} \fi}
\theoremstyle{plain}
\newtheorem{thm}{Theorem}[section]  
\newtheorem{cor}[thm]{Corollary}
\newtheorem{lem}[thm]{Lemma}
\newtheorem{defn}{Definition}[section]
\theoremstyle{definition}
\theoremstyle{remark}
\newtheorem{rem}{Remark}[section]
\numberwithin{equation}{section}
\DeclareMathOperator{\integers}{\mathbb{Z}}
\DeclareMathOperator{\reals}{\mathbb{R}}
\DeclareMathOperator{\nat}{\mathbb{N}}
\DeclareMathOperator{\Prob}{\mathbb{P}}
\newcommand{\set}[1]{\{ #1 \}}
\newcommand{\Lat}{\mathbb{Z}^{d+1}}
\newcommand{\Latd}{\mathbb{Z}^d}
\newcommand{\hP}{\widehat{\bP}}
\newcommand{\tP}{\widetilde{\bP}}
\newcommand{\hE}{\widehat{\bE}}
\newcommand{\tE}{{\widetilde{E}}}
\newcommand{\td}{{\widetilde{d}}}
\newcommand{\tmu}{{\widetilde{\mu}}}
\newcommand{\tOmega}{{\widetilde{\Omega}}}
\DeclareMathOperator{\sign}{sign}
\title{Absolute continuity and weak uniform mixing of random walk in dynamic
random environment}
\author{ 
Stein Andreas Bethuelsen
 \footnote{Technische Universit\"at M\"unchen, Fakult\"at f\"ur Mathematik, Boltzmannstr.\ 3, 85748 Garching, Germany. 
 Email: stein.bethuelsen@tum.de}  
 \quad
 Florian V\"ollering   
 \footnote{Department of Mathematical Sciences, University of Bath, Claverton Down, Bath, BA2 7AY, United Kingdom}
}
\begin{document}

\maketitle

\begin{abstract}

We prove results for random walks in dynamic random environments which do not
require the strong uniform mixing assumptions present in the literature. We
focus on the ``environment seen from the walker"-process and in particular its
invariant law. Under general conditions it exists and is mutually absolutely
continuous to the environment law. With stronger assumptions we obtain for
example uniform control on the density or a quenched CLT. The general
conditions are made more explicit by looking at hidden Markov models or Markov
chains as environment and by providing simple examples.
\end{abstract}

\bigskip

\emph{MSC2010.} Primary 82C41; Secondary 82C43, 60F17, 60K37.\\
\emph{Key words and phrases.} Random walk, dynamic random environment, absolute continuity, stability, central limit theorem, hidden Markov models, disagreement percolation
\bigskip

\section{Introduction and main results}

\subsection{Background and motivation}

We study the asymptotic behaviour of a class of random walks $(X_t)$ on
$\mathbb{Z}^{d}$ whose transition probabilities depend on another process, the
random
environment. Such models play an important role in the understanding of
disordered systems and serve as natural generalisations of the
classical
simple random walk model for describing transport processes in
inhomogeneous
media.

These types of random walks, which are called random walks in random
environment, can be split into two broad areas, static and dynamic
environments. In static environments the environment is created
initially and
then stays fixed in time. In dynamic environments the environment
instead
evolves over time. Note that a dynamic environment in $\bZ^d$ can
always be
reinterpreted as a static environment in $\bZ^{d+1}$ by turning time
into
an additional space dimension.

A major interest in dynamic environments are their often complicated
space-time dependency structure. Typically, in order to show that the random walk is diffusive, one looks for some way to
guarantee that the environment is ``forgetful'' and random walk
increments are
sufficiently independent on large time scales. 

One approach to this are various types of mixing assumptions on the
environment.
By now, general results are known for Markovian environments which are
uniformly mixing with respect to the starting configuration (see 
\citet*{AvenaHollanderRedigRWDRELLN2011} and
\citet{RedigVolleringRWDRE2013}).
For this, the rate at which the dynamic environment converges towards
its
equilibrium state plays an important role.

On the other hand, models where the dynamical environment has
non-uniform
mixing properties  serve as a major challenge  and are still not well
understood. Opposite to the diffusive behaviour known for uniform
mixing
environments, it has been conjectured that $(X_t)$ may be sub- or
super-diffusive for certain non-uniform mixing environments, see 
\citet{AvenaThomannRWDREsimulations2012}. Though some particular
examples
yielding diffusive behaviour have recently been studied by rigorous methods, e.g.\
\citet*{DeuschelGuoRamirezRWBDRE2015}, 
\citet*{HilarioHollanderSidoraviciusSantosTeixeiraRWRW2014},
\citet{HuveneersSimenhausRWSEP2014}
and \citet{MountfordVaresDCP2013}, these results are model specific
and/or
 perturbative in nature.
 No general theory has so far been developed.

In this article we provide a new approach for determining limiting
properties
of random walks in dynamic random environment, in particular about the
invariant law of the
``environment as seen from the walker''-process. Under general mixing assumptions, we prove  the existence of an invariant measure mutually absolutely
continuous with
respect to the random environment (Theorems \ref{def assump1} and
\ref{def
assump2}). Our mixing assumptions are considerably weaker than the uniform mixing conditions present in the literature (e.g.\ cone mixing) and do not require the environment to be Markovian.

An important feature of our approach is that it can also be applied to
dynamics with non-uniform mixing properties. Examples include an
environment
given by Ornstein-Uhlenbeck processes and the supercritical contact process.

Knowledge about the invariant measures for the ``environment as seen from the walker''-process yield limit laws for the random walk itself. One immediate application of  our approach is a strong law of large numbers for the random walk. Further applications include a quenched CLT based on
\citet*{DolgopyatKellerLIveraniRWME2008}, Theorem 1,
considerably relaxing its requirements.

Our key observation is an expansion of the ``environment as seen from
the
walker''-process (Theorem \ref{thm expansion}). This expansion enables
us to
separate the contribution of the random environment  to the law of the
``environment as seen from the walker''-process from that of the
transition
probabilities of the random walk.

We also show stability under perturbations of the environment or of the jump
kernel
of the random walk. Under a strong uniform mixing assumption, we obtain
uniform
control on the Radon-Nikodym derivative of the law of the ``environment
as
seen from the walker''-process with respect to the environment,
irrespective of
the choice of the jump kernel of the random walker.

\subsubsection*{Outline}
In the next two subsections we give a precise definition of our model and present our main results, Theorem \ref{def assump1} and Theorem \ref{def assump2}. Section \ref{sec examples} is devoted to examples and applications thereof. In Section \ref{sec expansion} we derive the aforementioned expansion, and present results on stability and control on the Radon-Nikodym derivative. Proofs are postponed until Section \ref{sec proofs Florian}.

\subsection{The model}
In this subsection we give a formal definition of our model. In short, $(X_t)$ is a random walk in a translation invariant random field with a deterministic drift in a fixed coordinate direction.

\subsubsection*{The environment}\label{sec model env}

Let $d\in \nat$ and let $\Omega := E^{\mathbb{Z}^{d+1}}$ where $E$ is assumed to be a finite set. 
We assign to the space $\Omega$ the standard product $\sigma$-algebra $\mathcal{F}$ generated by the cylinder events.  For $\Lambda \subset \mathbb{Z}^{d+1}$, we denote by $\mathcal{F}_{\Lambda}$ the sub-$\sigma$-algebra generated by the cylinders of $\Lambda$. For the forward half-space $\mathbb{H}:= \mathbb{Z}^{d} \times \integers_{\geq 0}$ we write $\mathcal{F}_{\geq0}$ for $\mathcal{F}_{\bH}$.

By $\mathcal{M}_1(\Omega)$ we denote the set of probability measures on $(\Omega, \mathcal{F})$.
We call $\eta \in \Omega$ the \emph{environment} and denote by $\bP \in  \mathcal{M}_1(\Omega)$ its law.
A particular class of environments contained in our setup are path measures of a stochastic process $(\eta_t)$ whose state space is $\Omega_0 :=E^{\mathbb{Z}^{d}}$.
To emphasise this, for $\eta \in \Omega$ and $(x,t) \in \integers^d \times \integers$, we often write  $\eta_t(x)$ for the value of $\eta$ at $(x,t)$.

 We assume throughout that $\bP$ is measure preserving with respect to translations, that is, for any $x\in \mathbb{Z}^{d}$, $t\in \integers$,
\begin{align}\label{eq translation invariant}
\bP(\cdot) = \bP(\theta_{x,t} \cdot ),
\end{align}
 where $\theta_{x,t}$ denotes the shift operator $\theta_{x,t} \eta_s(y) = \eta_{s+t}(y+x)$. Furthermore, we assume that $\bP$ is \emph{ergodic in the time direction}, that is, all events $B \in \mathcal{F}$ for which $\theta_{o,1} B :=  \{\omega \in \Omega \colon \theta_{o,-1} \omega \in B\} = B$ are assumed to satisfy $\bP(B) \in \{0,1\}$. Here,  $o\in \mathbb{Z}^{d}$ denotes the origin.
 
 \begin{rem}
We present in Subsection \ref{sec HMM}-\ref{sec OU} an approach where $E$ is allowed to be a general Polish space, by considering the environment as a hidden Markov model.
\end{rem}

\subsubsection*{The random walk}\label{sec RW}
The \emph{random walk} $(X_t)$ is a process on $\integers^d$. We assume w.l.o.g.\ that $X_0=o$.  The transition probabilities of $(X_t)$ is assumed to depend on the state of the environment as seen from the random walk. That is, given $\eta \in \Omega$, then the evolution of $(X_t)$ is given by
\begin{align}
&P_{\eta}(X_0 = o) =1
\\ &P_{\eta}(X_{t+1} = y+z \mid X_t = y) = \alpha( \theta_{y,t} \eta, z),
\end{align}
where $\alpha \colon \Omega \times \integers^d \rightarrow [0,1]$ satisfies $\sum_{z \in \mathbb{Z}^{d}} \alpha (\eta,z) =1$ for all $\eta \in \Omega$. The law of the random walk, $P_{\eta} \in \mathcal{M}_1((\Latd )^{\integers_{\geq0}})$, where we have conditioned on the entire environment, is called the \emph{quenched} law. We denote its $\sigma$-algebra by $\mathcal{G}$. 
Further, for $\mathbb{P} \in \mathcal{M}_1(\Omega)$, we denote by $P_{\bP}  \in \mathcal{M}_1 \left( \Omega \times (\Latd )^{\integers_{\geq0}}\right)$ the joint law of $(\eta,X)$, that is,
\begin{equation}
P_{\bP}(B \times A) = \int_{B}P_{\eta}(A) d\bP(\eta), \quad B \in \mathcal{F}, A \in \mathcal{G}.
\end{equation}
 The marginal law of $P_{\bP}$ on $(\Latd)^{\integers_{\geq0}}$ is the \emph{annealed} (or averaged) law of $(X_t)$.

We assume that the transition probabilities of $(X_t)$ only depend on the environment within a finite region around its location. That is, there exist $R \in \nat$ such that
for all $z \in \mathbb{Z}^{d}$
\begin{align}\label{eq assump alpha}
 \alpha(\eta,z)- \alpha(\sigma,z) = 0  \text{ whenever } \sigma \equiv \eta \text{ on } [-R,R]^{d} \times \{0\}.  
 \end{align}
Further, define
\begin{equation}\label{eq walker range}
\mathcal{R} := \set{y \in \mathbb{Z}^{d} \colon \sup_{\eta \in \Omega} \alpha(\eta,y)>0 }
\end{equation} as the jump range of the random walker, which we assume to be finite and to contain $o$. By possibly enlarging $R$ we can  guarantee that 
\begin{align}\label{eq walker R}
\sup_{y \in \mathbb{Z}^{d}} \set{ \norm{y}_1 \colon y \in \mathcal{R}} \leq R.
\end{align} 
  Lastly, we say that $(X_t)$ is \emph{elliptic in the time direction} if 
\begin{align}\label{eq weakly elliptic Florian}
\alpha(\eta,o) >0, \quad \forall \: \eta \in \Omega.
\end{align}
  If, after replacing $o$ with $y \in \mathcal{R}$, \eqref{eq weakly elliptic Florian} holds for all $y \in \mathcal{R}$, then we say that  $(X_t)$ is  \emph{elliptic}. 
 
\subsubsection*{The environment process}\label{sec EP def}

``The environment as seen from the walker''-process is of importance for understanding the asymptotic behaviour of the random walk itself, but it is also of independent interest. This process, which is given by
\begin{equation} (\eta_t^{EP}):=(\theta_{X_t,t}\eta), \quad t \in \integers_{\geq0}, \end{equation}
is called the \emph{environment process}. 
Note that $(\eta_t^{EP})$ is a Markov process on $\Omega$ under $P_{\eta}$, $\eta \in \Omega$, with initial distribution $\bP$.

 \subsection{Main results}\label{sec results Florian}
In this subsection we present our main results about the asymptotic behaviour of $(X_t)$ and $(\eta_t^{EP})$. However, before stating our first theorem we need to introduce some more notation.

Recall \eqref{eq walker range} and let
 \begin{equation}\label{eq Gamma k}
 \Gamma_k:= \left\{(\gamma_{-k},\gamma_{-k+1},...,\gamma_0) \colon \gamma_i \in \bZ^d,  \gamma_i-\gamma_{i-1}\in \mathcal{R} , -k\leq i<0, \gamma_0=o \right\}
 \end{equation}
 be the set of all possible backwards trajectories from $(o,0)$ of length $k$. For $\gamma \in \Gamma_k$ and $\sigma \in \Omega$, denote by
\begin{equation}\label{eq Akm}
 A_{-k}^{-m}(\gamma, \sigma) := \bigcap_{i=-k}^{-m} \left\{ \theta_{\gamma_i,-i}\eta \equiv \sigma_i \text{ on } [-R,R]^{d}\times \{0\} \right\}, \quad 1 \leq m \leq k,
\end{equation}
the event that an element $\eta \in \Omega$ equals $\sigma $ in the $R$-neighbourhood along the path $(\gamma_k, \dots, \gamma_{-m})$.
$A_{-k}^{-1}(\gamma,\sigma)$ is the event that the path of the environment observed by the random walk equals $\sigma$ if the random walk moves along the path $\gamma$.
 Given $\gamma \in \Gamma_k$, denote by 
\begin{align}
\mathcal{A}_{-k}^{-m}(\gamma) := \left\{ A_{-k}^{-m}(\gamma,\sigma) \colon \sigma \in \Omega \text{ and } \bP(A_{-k}^{-m}(\gamma,\sigma))>0 \right\}
\end{align} 
the set of all possible observations along the path $(\gamma_k, \dots, \gamma_{-m})$.
We write
$\mathcal{A}_{-\infty}^{-m}$ for the set of  events $\bigcup_{k \geq m, \gamma \in {\Gamma_k}} \mathcal{A}^{-m}_{-k}(\gamma)$. If $m=1$ we simply write $\mathcal{A}_{-\infty}$. 

 Further, denote by $\mathcal{C} := \{ (x,t) \in \bH \colon \norm{x}_1 \leq (R+1)t \}$ the forward cone with centre at $(o,0)$ and slope proportional to $R+1$. For $j \in \integers$, denote by $\mathcal{C}(j) := \mathcal{C} \cap \theta_{o,j} \bH$ 
and let $\mathcal{F}_{\infty}^{\infty} := \bigcap_{j \in \nat} \mathcal{F}_{\mathcal{C}(j)}$ be the tail-$\sigma$-algebra with respect to $\mathcal{F}_{\mathcal{C}}$.

\begin{thm}[Existence of an ergodic measure for the environment process]\label{def assump1}
Assume that $\bP \in \mathcal{M}_1(\Omega)$ satisfies 
\begin{align}\label{eq cone mixing Florian}
\lim_{l \rightarrow \infty}
 \sup_{B \in \mathcal{F}_{\mathcal{C}(l)}}
\sup_{A \in \mathcal{A}_{-\infty}} \left|\bP(B \mid A) - \bP(B)\right| =0.
\end{align}
Then there exists $\bP^{EP} \in \mathcal{M}_1(\Omega)$ invariant under $(\eta_t^{EP})$ satisfying $\bP^{EP} = \bP$ on $\mathcal{F}^{\infty}_{\infty}$. 

If $(X_t)$ is elliptic in the time direction and $\bP$ is ergodic in the time direction, 
then $\bP^{EP}$ is ergodic with respect to $(\eta_t^{EP})$. Moreover, for any $\mathbb{Q} \ll \bP$ on $\mathcal{F}^{\infty}_{\infty}$,
 \begin{align}\label{eq converge ergodic averages}
 \frac{1}{t} \sum_{s=0}^{t-1} P_{\mathbb{Q}}(\eta_s^{EP}\in \cdot) \text{ converges weakly towards } \bP^{EP} \text{ as } t \rightarrow \infty.
 \end{align}
\end{thm}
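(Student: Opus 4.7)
The proof proceeds in four steps: existence of $\bP^{EP}$, its agreement with $\bP$ on the tail, Cesàro convergence under the added hypotheses, and ergodicity deduced from uniqueness. Existence follows from Krylov--Bogolyubov: set $\bP_T^{EP}:=T^{-1}\sum_{s=0}^{T-1}P_{\bP}(\eta_s^{EP}\in\cdot)$. Since $E$ is finite, $\Omega$ is compact, and the environment process has a Feller transition kernel on $\Omega$ (both $\alpha(\cdot,z)$ and $\eta\mapsto f(\theta_{z,1}\eta)$ are continuous for continuous $f$), so $\{\bP_T^{EP}\}$ is weakly precompact and every subsequential weak limit $\bP^{EP}$ is invariant under $(\eta_t^{EP})$.

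The core step is the tail agreement. I would decompose $P_{\bP}(\eta_s^{EP}\in B)$ over the walker's backward trajectories $\gamma\in\Gamma_s$ relative to the current position $X_s$. On $\{X\sim\gamma\}$ one has $X_s=-\gamma_{-s}$, and by \eqref{eq assump alpha} the walker-path probability depends on $\eta$ only through the $R$-neighbourhoods along the trajectory. Re-centering via $\eta':=\theta_{-\gamma_{-s},s}\eta$ and invoking translation invariance of $\bP$ yield
\begin{equation}
P_{\bP}(\eta_s^{EP}\in B)=\sum_{\gamma\in\Gamma_s,\,\sigma\in\Omega}p(\gamma,\sigma)\,\bP\big(A_{-s}^{-1}(\gamma,\sigma)\cap B\big),
\end{equation}
with $p(\gamma,\sigma):=\prod_{j=1}^{s}\alpha(\sigma_{-j},\gamma_{-j+1}-\gamma_{-j})$ and total weight $\sum_{\gamma,\sigma}p(\gamma,\sigma)\bP(A_{-s}^{-1}(\gamma,\sigma))=1$. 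Since each $A_{-s}^{-1}(\gamma,\sigma)\in\mathcal{A}_{-\infty}$, cone mixing \eqref{eq cone mixing Florian} delivers an error $\epsilon(l)\to 0$ with $|\bP(B\mid A_{-s}^{-1}(\gamma,\sigma))-\bP(B)|\leq\epsilon(l)$ uniformly for $B\in\mathcal{F}_{\mathcal{C}(l)}$. Multiplying through gives the uniform-in-$s$ bound $|P_{\bP}(\eta_s^{EP}\in B)-\bP(B)|\leq\epsilon(l)$, which carries over to $\bP_T^{EP}(B)$; a sandwich via cylinder approximations in $\mathcal{F}_{\mathcal{C}(l)}$ (needed because weak convergence directly controls only continuity sets) transfers this to the limit $\bP^{EP}$, and taking $l\to\infty$ for $B\in\mathcal{F}^{\infty}_{\infty}=\bigcap_l\mathcal{F}_{\mathcal{C}(l)}$ yields $\bP^{EP}(B)=\bP(B)$.

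For the convergence \eqref{eq converge ergodic averages} I would extend the above decomposition to a general initial law $\mathbb{Q}$ using the expansion of Theorem \ref{thm expansion}, which represents $P_{\mathbb{Q}}(\eta_s^{EP}\in\cdot)$ as a mixture of shifted $\bP$-probabilities modulated by a Radon--Nikodym-type density of $\mathbb{Q}$ on the tail. Combined with the cone-mixing estimate from step~2, the absolute continuity $\mathbb{Q}\ll\bP$ on $\mathcal{F}^{\infty}_{\infty}$, ellipticity $\alpha(\eta,o)>0$, and ergodicity of $\bP$ under the time shift, the difference $T^{-1}\sum_s(P_{\mathbb{Q}}-P_{\bP})(\eta_s^{EP}\in\cdot)$ vanishes weakly, so both Cesàro averages share the limit $\bP^{EP}$. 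The main obstacle is to control uniformly in $s$ the path-weighted density factors appearing in the expansion; ellipticity in the time direction enters precisely to keep these factors well-defined by ensuring the walker always has a positive chance to remain stationary.

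Ergodicity then follows from \eqref{eq converge ergodic averages} by a uniqueness argument. If $\bP^{EP}=a\pi_1+(1-a)\pi_2$ were a nontrivial convex combination of invariant measures, then each $\pi_i\ll\bP^{EP}=\bP$ on $\mathcal{F}^{\infty}_{\infty}$, so \eqref{eq converge ergodic averages} applies to $\mathbb{Q}=\pi_i$. Since $\pi_i$ is itself invariant its Cesàro averages identically equal $\pi_i$, and by \eqref{eq converge ergodic averages} they must also equal $\bP^{EP}$; hence $\pi_1=\pi_2=\bP^{EP}$, contradicting non-extremality. Thus $\bP^{EP}$ is extremal among invariant measures, i.e., ergodic for $(\eta_t^{EP})$.
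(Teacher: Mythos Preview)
The first two steps of your proposal match the paper's proof: Krylov--Bogolyubov gives a subsequential invariant limit, and the path decomposition (your step~2, the paper's Lemma~\ref{lem exp main} via Theorem~\ref{thm expansion}) combined with the cone-mixing bound shows that any such limit agrees with $\bP$ on $\mathcal{F}_\infty^\infty$.

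The gap is in your step~3. You propose to establish the Ces\`aro convergence \eqref{eq converge ergodic averages} for general $\mathbb{Q}\ll\bP$ on $\mathcal{F}_\infty^\infty$ directly from the expansion, and then deduce ergodicity in step~4 by the extremality argument. But the expansion of Theorem~\ref{thm expansion} relies essentially on the translation invariance of the law one starts from; it does not extend to a general $\mathbb{Q}$. Absolute continuity only on the tail $\sigma$-algebra $\mathcal{F}_\infty^\infty$ is far too weak to supply a density controlling the weights $\mathbb{Q}(A_{-k}^{-1})$, since the events $A_{-k}^{-1}$ are local, not tail, events. You flag this as ``the main obstacle'' without resolving it, and there is no such resolution along these lines. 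Note also that even for $\mathbb{Q}=\bP$ you have at this stage only subsequential convergence, so you cannot yet speak of \emph{the} limit $\bP^{EP}$ to feed into step~4; your steps~3 and~4 are effectively circular.

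The paper reverses the order: it proves ergodicity first, by a direct argument, and then deduces convergence. The idea you are missing is that for any invariant subsequential limit $\mathbb{Q}$ and any local $f$, the Birkhoff limit $g=\mathbb{E}_{\mathbb{Q}}[f\mid\mathcal{I}]$ can be shown to be $\mathcal{F}_\infty^\infty$-measurable. Ellipticity in the time direction is used concretely here: on the positive-probability, environment-independent event that the walker stays put for the first $t$ steps, the environment process coincides with the pure time shift $\theta_{o,\cdot}$, which lets one rewrite $g$ as a function $\bar g$ that is $\mathcal{F}_{\mathcal{C}(k)}$-measurable for every $k$. Once $g$ is tail-measurable and (via the harmonic-function identity) $\theta_{o,1}$-invariant, ergodicity of $\bP$ in the time direction forces $g$ to be a constant; since the function $\bar g$ does not depend on which subsequential limit $\mathbb{Q}$ one started from, all such limits assign the same integral to $f$ and hence coincide. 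Convergence \eqref{eq converge ergodic averages} then follows, and the same argument runs verbatim when the process is initialised from any $\mathbb{Q}\ll\bP$ on $\mathcal{F}_\infty^\infty$. Your extremality argument in step~4 is correct in spirit, but it only becomes available \emph{after} this ergodicity/uniqueness step, not before.
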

 
 \begin{rem}
 There is a certain freedom in the ellipticity and the ergodicity assumptions in Theorem \ref{def assump1}. For instance, the statement still holds if, for some $k \in \nat$, the walker has a positive probability to return to $o$ after $k$ time steps, uniformly in the environment. The definitions can also be modified to require ellipticity and ergodicity with respect other directions $(y,1) \in \Lat$, with  $y \in \mathcal{R}$ (instead of in the direction $(o,1)$. On the other hand, both ellipticity and ergodicity in the time direction are natural assumptions if $\bP$ is the path measure of some stochastic process.
 \end{rem}
 
\begin{cor}[Law of large numbers]\label{thm new lln}
Assume that  $\bP \in \mathcal{M}_1(\Omega)$  is ergodic in the time direction and satisfies \eqref{eq cone mixing Florian}, and that  $(X_t)$ is elliptic in the time direction. 
Then there exists $v\in \reals^d$ such that
$\lim_{t \rightarrow \infty} \frac{1}{t} X_t = v$,  $P_{\bP}-a.s.$
\end{cor}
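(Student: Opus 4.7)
The plan is to decompose $X_t$ into a martingale with bounded increments plus a time average of a local drift along the environment process, apply Birkhoff's ergodic theorem under the invariant measure $\bP^{EP}$ supplied by Theorem~\ref{def assump1} to identify the limit, and then transfer the almost-sure statement from $P_{\bP^{EP}}$ to $P_\bP$ using the identity $\bP^{EP} = \bP$ on $\cF^\infty_\infty$.

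Concretely, set $f(\sigma) := \sum_{z \in \cR} z\,\alpha(\sigma,z)$, which is a bounded $\reals^d$-valued function because $\cR$ is finite, and which depends on $\sigma$ only through $[-R,R]^d\times\{0\}$ by \eqref{eq assump alpha}. Since the conditional one-step drift of $(X_t)$ at time $t$ under $P_\eta$ equals $f(\eta_t^{EP})$, the process
\begin{equation}
M_t := X_t - \sum_{s=0}^{t-1} f(\eta_s^{EP})
\end{equation}
is a martingale in the natural filtration with uniformly bounded increments. The martingale SLLN then gives $M_t/t \to 0$ $P_\eta$-a.s.\ for every $\eta$, and thus $P_\bP$-a.s. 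The corollary therefore reduces to showing $A_t := \frac{1}{t}\sum_{s=0}^{t-1} f(\eta_s^{EP}) \to v := \int f\, d\bP^{EP}$ $P_\bP$-a.s.

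Under the hypotheses of the corollary, Theorem~\ref{def assump1} makes $\bP^{EP}$ an ergodic invariant measure for the Markov chain $(\eta_t^{EP})$. Applying Birkhoff's ergodic theorem under $P_{\bP^{EP}}$ yields $A_t \to v$ $P_{\bP^{EP}}$-a.s. Define $g(\eta) := P_\eta(A_t \to v)$; since the event $\{A_t \to v\}$ is unaffected by any finite initial segment, it is a tail event for $(\eta_t^{EP})$, and the Markov property therefore gives $g(\eta) = E_\eta[g(\eta_n^{EP})]$ for every $n \geq 0$, i.e.\ $g$ is harmonic. Combined with $\int g\, d\bP^{EP} = 1$ and $0 \leq g \leq 1$, this forces $g = 1$ $\bP^{EP}$-a.s.

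The main obstacle is upgrading this to $g = 1$ $\bP$-a.s.\ using only the agreement of $\bP^{EP}$ and $\bP$ on the cone tail $\sigma$-algebra. The plan is to observe that the quenched walker reads $\eta$ only inside $\cC$ since $(X_s,s) \in \cC$ for all $s$, and then to exploit the harmonicity $g = P^n g$: the values of $g(\eta_n^{EP})$ depend on $\eta$ only inside $\cC(n)$ up to the walker's position at time $n$. Pushing $n$ to infinity and passing to a $\bP^{EP}$-a.s. version, one arranges that the exceptional set $\{g<1\}$ lies in $\cF^\infty_\infty$ modulo $\bP^{EP}$-null sets. The identity $\bP^{EP} = \bP$ on $\cF^\infty_\infty$ from Theorem~\ref{def assump1} then gives $\bP(\{g<1\}) = \bP^{EP}(\{g<1\}) = 0$, so that $P_\bP(A_t \to v) = \int g\, d\bP = 1$, identifying the deterministic limit $v \in \reals^d$ and completing the proof.
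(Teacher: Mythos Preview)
Your overall strategy coincides with the paper's: decompose $X_t$ into a bounded-increment martingale plus the ergodic average of the local drift $f$ along $(\eta_t^{EP})$, apply Birkhoff under the ergodic invariant measure $\bP^{EP}$ from Theorem~\ref{def assump1}, and then transfer the $P_{\bP^{EP}}$-a.s.\ convergence to $P_{\bP}$-a.s.\ via $\bP^{EP}=\bP$ on $\cF^\infty_\infty$. The martingale and Birkhoff parts are fine and match the paper essentially verbatim.

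The gap is in your transfer step. You assert that ``the exceptional set $\{g<1\}$ lies in $\cF^\infty_\infty$ modulo $\bP^{EP}$-null sets'' and then conclude $\bP(\{g<1\})=\bP^{EP}(\{g<1\})=0$. But equality of $\bP$ and $\bP^{EP}$ on $\cF^\infty_\infty$ only lets you compare genuine $\cF^\infty_\infty$-events; it says nothing about a set that differs from some $\cF^\infty_\infty$-event by a $\bP^{EP}$-null set, since you have no absolute continuity $\bP\ll\bP^{EP}$ here. Your harmonicity argument $g=P^n g$ does not by itself produce $\cF_{\cC(n)}$-measurability of $g$: the averaging in $E_\eta[g(\eta_n^{EP})]$ runs over the random position $X_n$, which depends on $\eta$ through the first $n$ time-layers of $\cC$.

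The paper closes exactly this gap (inside the proof of Theorem~\ref{def assump1}) by using ellipticity in the time direction in a concrete way: one realises the walk so that the event $B_t=\{X_0=\cdots=X_t=o\}$ has probability independent of $\eta$ (possible because $E$ is finite, so $\inf_\eta\alpha(\eta,o)>0$), and defines $\bar g(\eta):=\limsup_n P(B_t)^{-1}E_\eta[\ind_{B_t}\,A_n]$. On $B_t$ the walker sits at the origin up to time $t$, so $\bar g$ is genuinely $\cF_{\cC(k)}$-measurable for $k$ growing with $t$; taking $t$ large makes $\bar g$ $\cF^\infty_\infty$-measurable for every $\eta$, not just modulo null sets. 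Since $g=\bar g$ $\bP^{EP}$-a.s., one then legitimately uses $\bP=\bP^{EP}$ on $\cF^\infty_\infty$ (together with $\theta_{o,1}$-ergodicity of $\bP$) to get constancy $\bP$-a.s. Your sketch is on the right track but omits this construction, which is precisely where the ellipticity hypothesis enters.
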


 Condition \eqref{eq cone mixing Florian} is a considerably weaker mixing assumption than the cone mixing condition introduced by \citet{CometsZeitouniLLNforRWME2004} (see Condition $\mathcal{A}_1$ therein) and used in \citet*{AvenaHollanderRedigRWDRELLN2011} in the context of random walks in dynamic random environment.
For comparison, note that  cone mixing  is equivalent to taking the supremum over events $A \in \mathcal{F}_{< 0} := \mathcal{F}_{\mathbb{Z}^{d+1} \setminus \bH}$ in  \eqref{eq cone mixing Florian}. That Condition \eqref{eq cone mixing Florian} is strictly weaker can already be seen in the case where $\bP$ is i.i.d.\ with respect to space; see Theorem \ref{thm iid space}. Further examples where Condition \eqref{eq cone mixing Florian} improve on the classical cone mixing condition are given in Section \ref{sec examples} and include dynamic random environments with non-uniform mixing properties.


Under a slightly stronger mixing assumption on the environment we obtain more information about $\bP^{EP}$. For this, denote by $\Lambda(l):= 
\{ x \in \bH \colon \norm{x}_1\geq l \}$, $l \in  \nat$, where $\norm{\cdot}_1$ denotes the $l_1$ distance from $(o,0)$, and let $\mathcal{F}_{\geq 0}^{\infty}:= \bigcap_{l\in \nat} \mathcal{F}_{\Lambda(l)}$ be the tail-$\sigma$-algebra with respect to $\mathcal{F}_{\geq0}$.

\begin{thm}[Absolute continuity]\label{def assump2}
Let $\phi \colon \nat \rightarrow [0,1]$ be such that 
\begin{equation}\label{eq bcm}
\sup_{B \in \mathcal{F}_{\Lambda(l)}} \sup_{A \in \mathcal{A}_{-\infty}} \left| \bP( B \mid A) - \bP( B)\right| \leq \phi(l), \end{equation}
 with $\lim_{l \rightarrow \infty} \phi(l)=0$. Then $\bP^{EP}  = \bP$ on $\mathcal{F}_{\geq 0}^{\infty}$ (with $\bP^{EP}$ as in Theorem \ref{def assump1}) and
 \begin{align}\label{eq eqfo in thm}
\sup_{B \in \mathcal{F}_{\Lambda(l)}}  |\bP(B) - \bP^{EP}(B)| \leq \phi(l). 
 \end{align}
 Furthermore, if $(X_t)$ in addition  is elliptic, then $\bP$ and $\bP^{EP}$ are mutually absolutely continuous on $(\Omega,\mathcal{F}_{\geq0})$.
 \end{thm}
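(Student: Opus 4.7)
My starting point is the expansion provided by Theorem~\ref{thm expansion}. I expect it to represent the finite-time law of the environment process as a convex combination
\begin{equation*}
P_{\bP}(\eta_s^{EP} \in B) \;=\; \sum_{\gamma \in \Gamma_s}\sum_{\sigma} w_s(\gamma,\sigma)\, \bP\!\left(B \,\middle|\, A_{-s}^{-1}(\gamma,\sigma)\right),
\end{equation*}
where the non-negative weights $w_s(\gamma,\sigma)$ come from the annealed random walk and satisfy $\sum_{\gamma,\sigma} w_s(\gamma,\sigma) = 1$. Each conditioning event $A_{-s}^{-1}(\gamma,\sigma)$ lies in $\mathcal{A}_{-\infty}$, while the test event $B \in \mathcal{F}_{\Lambda(l)}$ is supported in the forward half-space at $\ell_1$-distance at least $l$ from $(o,0)$. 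Hence assumption~\eqref{eq bcm} applies uniformly in $(\gamma,\sigma)$ and gives $|\bP(B \mid A_{-s}^{-1}(\gamma,\sigma)) - \bP(B)| \leq \phi(l)$. Averaging against $w_s$ yields the uniform-in-$s$ bound $|P_{\bP}(\eta_s^{EP} \in B) - \bP(B)| \leq \phi(l)$. Taking the Ces\`aro mean in $s$ and invoking the weak convergence~\eqref{eq converge ergodic averages} from Theorem~\ref{def assump1} produces~\eqref{eq eqfo in thm} (first for cylinder $B$, whose indicators are continuous in the product topology on $\Omega$, and then for arbitrary $B \in \mathcal{F}_{\Lambda(l)}$ by the standard extension of total-variation bounds from a generating $\pi$-system). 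Since $\mathcal{F}_{\geq 0}^{\infty} = \bigcap_{l} \mathcal{F}_{\Lambda(l)}$, letting $l \to \infty$ gives $\bP^{EP} = \bP$ on the forward tail.

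\textbf{Mutual absolute continuity under ellipticity.} For the third statement I would treat the two directions separately. For $\bP^{EP} \ll \bP$ on $\mathcal{F}_{\geq 0}$, note that each $P_{\bP}(\eta_s^{EP} \in \cdot)$ is absolutely continuous with respect to $\bP$ with explicit density
\begin{equation*}
f_s(\sigma) \;=\; \sum_{\gamma \in \Gamma_s}\prod_{j=-s}^{-1}\alpha\bigl(\theta_{\gamma_j, j}\sigma,\, \gamma_{j+1} - \gamma_j\bigr).
\end{equation*}
I would set $g_s := \bE_{\bP}[f_s \mid \mathcal{F}_{\geq 0}]$ and argue that $\{g_s\}_{s \geq 0}$ is uniformly integrable under $\bP$: full ellipticity forces every factor $\alpha(\eta, y)$ to be uniformly bounded away from $0$, and the mixing hypothesis~\eqref{eq bcm} decorrelates the past events carrying $f_s$ from the future. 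Weak $L^1$-compactness then lets me pass through the Ces\`aro mean to a density $h = d\bP^{EP}/d\bP$ on $\mathcal{F}_{\geq 0}$. For the reverse direction $\bP \ll \bP^{EP}$, I would exploit the invariance of $\bP^{EP}$: if $\bP^{EP}(B) = 0$ for $B \in \mathcal{F}_{\geq 0}$, then $\bE_{\bP^{EP}}[P_\eta(\eta_s^{EP} \in B)] = 0$ for every $s$, and since $\alpha(\eta, y) \geq \epsilon > 0$ for all $y \in \mathcal{R}$ this forces $\ind_B(\theta_{y, s}\eta) = 0$ for every $y$ reachable in $s$ steps, $\bP^{EP}$-almost surely. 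Choosing $s$ large and decomposing $B$ so that shifts $\theta_{y,s}^{-1}B$ can be approximated by events in $\mathcal{F}_{\geq 0}^{\infty}$ (where $\bP = \bP^{EP}$ by the first part), together with translation invariance of $\bP$, should then deliver $\bP(B) = 0$.

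\textbf{Main obstacle.} The first two claims follow rather mechanically once one has the expansion and the mixing hypothesis in hand. The substantive difficulty lies in the third claim: the total-variation estimate~\eqref{eq eqfo in thm} only constrains events far from the origin, while mutual absolute continuity on all of $\mathcal{F}_{\geq 0}$ demands control on arbitrary local events as well. Ellipticity is essential in both directions---it provides the uniform positivity of $f_s$ that feeds the uniform integrability argument, and it permits the inversion of the walk dynamics needed to transfer a $\bP^{EP}$-null set to a $\bP$-null set. I expect the uniform-integrability step for $\bP^{EP} \ll \bP$ and the shift-and-approximation step for $\bP \ll \bP^{EP}$ to be the most delicate parts of the argument.
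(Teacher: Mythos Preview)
Your treatment of the first two claims is correct and essentially identical to the paper's: the expansion of Theorem~\ref{thm expansion} gives the uniform-in-$s$ bound via Lemma~\ref{lem exp main}, and passing to the Ces\`aro limit yields~\eqref{eq eqfo in thm} and tail agreement.

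For mutual absolute continuity, however, your route diverges from the paper's and carries a genuine gap. Your uniform-integrability argument for $\bP^{EP}\ll\bP$ is not substantiated: ellipticity gives each factor $\alpha(\eta,y)\geq\epsilon$, which is a \emph{lower} bound on $f_s$ and does nothing to prevent large values; meanwhile the mixing hypothesis~\eqref{eq bcm} only controls $\bP(B\mid A)-\bP(B)$ for $B\in\mathcal{F}_{\Lambda(l)}$, i.e.\ for events supported \emph{far} from the origin, and says nothing about local events. There is no evident mechanism by which these two ingredients combine to give $\sup_s\bE_{\bP}[g_s\ind_{g_s>M}]\to0$. Your reverse direction has a parallel problem: the step ``approximate $\theta_{y,s}^{-1}B$ by events in $\mathcal{F}_{\geq0}^\infty$'' is exactly what is not available for a general $B\in\mathcal{F}_{\geq0}$, so you are again trying to transfer a far-field statement to a local one without a bridge.

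The paper resolves this differently and more cheaply. It does \emph{not} attempt to prove $\bP^{EP}\ll\bP$ directly. Instead, from~\eqref{eq eqfo in thm} it extracts only that $\bP$ and $\bP^{EP}$ are \emph{non-singular} on $\mathcal{F}_{\Lambda(l)}$ for $l$ large (total-variation distance $<1$). A conditional version of the expansion (Lemma~\ref{lem extended expansion}) together with finiteness of $E$ then shows that, conditioned on the environment outside $\Lambda(l)$, the finite-volume law of $\bP^{-k}$ is absolutely continuous with respect to that of $\bP$; this upgrades non-singularity from $\mathcal{F}_{\Lambda(l)}$ to all of $\mathcal{F}_{\geq0}$. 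Finally, Lemma~\ref{lem thm 4.2 help} takes the Lebesgue decomposition $\bP^{EP}=\alpha\bP^{EP}_c+(1-\alpha)\bP^{EP}_s$, uses ellipticity to show the continuous part $\bP^{EP}_c$ is itself invariant, and then observes that the support $\{d\bP^{EP}_c/d\bP>0\}$ is $\theta_{o,1}$-invariant, hence has full $\bP$-measure by ergodicity. This simultaneously yields both directions of absolute continuity without any integrability estimate on densities.
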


Knowing that the environment process converges toward an ergodic measure, it is well known how to apply martingale technics in order to deduce an annealed functional central limit theorem.  However, it may happen that the covariance matrix is trivial. In \citet{RedigVolleringRWDRE2013} it was shown that the covariance matrix is non-trivial in a rather general setting when the environment is given by a Markov process satisfying a certain uniform mixing assumption. It is an interesting question whether $(X_t)$ satisfies an annealed functional central limit theorem with non-trivial covariance matrix under the weaker mixing assumption of \eqref{eq cone mixing Florian}.
 
To obtain a quenched central limit theorem is a much harder problem and is only known in a few cases for random walks in dynamic random environment, see e.g.\ \citet{BricmontKuipainenRWSTME2009},  \citet*{DeuschelGuoRamirezRWBDRE2015}, \citet*{DolgopyatLiveraniNPRWME2008} and \citet*{DolgopyatKellerLIveraniRWME2008}. 
In \cite{DolgopyatKellerLIveraniRWME2008}, Theorem 1, a quenched central limit theorem was proven under technical conditions on both the environment and the environment process. One important condition there was that the environment process has an invariant measure mutually continuous with respect to the invariant measure of the environment. By Theorem \ref{def assump2} above this condition is fulfilled. Combining this result with rate of convergence estimates obtained in \cite{RedigVolleringRWDRE2013}, we conclude a quenched central limit theorem for a large class of uniformly mixing environments. 
 
\begin{cor}[Quenched central limit theorem]\label{thm qclt}
Assume that $(\eta_t)$  is a Markov chain on $E^{\mathbb{Z}^{d}}$. 
For $\sigma, \omega \in \Omega_0$ let $\widehat{P}_{\sigma,\omega}$ be a coupling of $(\eta_t)$ started from $\sigma,\omega \in \Omega_0$ respectively and 
such that, for some $c,C>0$,
\begin{equation}\label{eq QCLT condition}
\sup_{\sigma,\omega \in \Omega} \widehat{P}_{\sigma,\omega}(\eta_t^{(1)}(o) \neq \eta_t^{(2)}(o) ) \leq Ce^{-ct}.
\end{equation}
Furthermore, assume that $(\eta_t)$ satisfies Conditions (A3)-(A4) in \cite{DolgopyatKellerLIveraniRWME2008} and that $(X_t)$ is elliptic. Then, there is a non-trivial $d\times d$ matrix $\Sigma$ such that for $P_{\mu}$-a.e. environment history $(\eta_t)$
\begin{equation}
\frac{X_N -Nv}{\sqrt{N}} \text{ converges weakly towards } \mathcal{N}(0,\Sigma) \quad
\Prob_{(\eta_t)}-a.s., 
 \end{equation}
where $\mu \in \mathcal{M}_1(\Omega_0)$ is the unique ergodic measure with respect to $(\eta_t)$.
\end{cor}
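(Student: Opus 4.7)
The strategy is to verify the hypotheses of \cite[Theorem 1]{DolgopyatKellerLIveraniRWME2008}. Beyond (A3)-(A4) and ellipticity, which are assumed, the decisive requirement is that the environment process $(\eta_t^{EP})$ admit an invariant measure mutually absolutely continuous with the law $\bP$ of the environment. This is exactly what Theorem \ref{def assump2} delivers, provided we can verify the uniform mixing bound \eqref{eq bcm}. Once this is in place, \cite[Theorem 1]{DolgopyatKellerLIveraniRWME2008} supplies the quenched CLT for some covariance matrix $\Sigma$.

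The main technical step is therefore to upgrade the single-site coupling \eqref{eq QCLT condition} to \eqref{eq bcm}. Let $A \in \mathcal{A}_{-\infty}$ and $B \in \mathcal{F}_{\Lambda(l)}$. Since $(\eta_t)$ is Markov in time, and $A$ depends only on times $t \leq 0$ while $B$ depends only on times $t \geq 0$, conditioning on $A$ influences $B$ solely through the conditional law of $\eta_0$:
\begin{equation}
\bP(B \mid A) - \bP(B) = \int_{\Omega_0} \bigl( \bP^{\sigma}(B) - \bP(B) \bigr)\, \bP(\eta_0 \in d\sigma \mid A),
\end{equation}
where $\bP^{\sigma}$ denotes the law of $(\eta_t)_{t \geq 0}$ started from $\sigma$. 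It therefore suffices to bound $\sup_{\sigma} |\bP^{\sigma}(B) - \bP(B)|$ uniformly in $\sigma$ for $B \in \mathcal{F}_{\Lambda(l)}$. For coordinates $(x,s)$ with $s \geq l/2$, the coupling \eqref{eq QCLT condition} combined with translation invariance of the dynamics and a finite union bound over the sites determining $B$ gives an exponentially small discrepancy. For coordinates at times $s < l/2$, in which case the spatial distance satisfies $\|x\|_1 \geq l/2$, the desired decorrelation is supplied by the spatial-mixing content of hypotheses (A3)-(A4). Together these yield \eqref{eq bcm} with exponentially decaying $\phi$.

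With \eqref{eq bcm} established, Theorem \ref{def assump2} provides an invariant measure $\bP^{EP}$ which, using ellipticity of $(X_t)$, is mutually absolutely continuous with $\bP$ on $\mathcal{F}_{\geq 0}$. Applying \cite[Theorem 1]{DolgopyatKellerLIveraniRWME2008} then produces a quenched CLT with covariance matrix $\Sigma$. Non-triviality of $\Sigma$ follows from the quantitative rate-of-convergence estimates of \cite{RedigVolleringRWDRE2013}, which under \eqref{eq QCLT condition} and ellipticity yield a uniform lower bound on $\Var(u\cdot X_N)/N$ for every unit vector $u \in \reals^d$. The principal obstacle is the translation of the single-site coupling into the uniform bound \eqref{eq bcm}: events $B$ that are concentrated near time zero but far in space are not directly controlled by \eqref{eq QCLT condition}, and we must extract the necessary spatial decorrelation from the remaining hypotheses on $(\eta_t)$.
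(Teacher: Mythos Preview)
Your overall strategy---reduce to \cite[Theorem 1]{DolgopyatKellerLIveraniRWME2008} and supply the absolute-continuity condition via Theorem \ref{def assump2}---matches the paper. But there is a genuine gap: the Dolgopyat--Keller--Liverani theorem has six hypotheses (A0)--(A5), and you have only accounted for (A1), (A3), (A4), (A5). You treat mutual absolute continuity as ``the decisive requirement'' beyond what is assumed, but (A0) and (A2) are separate mixing and rate-of-convergence conditions on the \emph{environment process} $(\eta_t^{EP})$, not on $(\eta_t)$ itself, and they do not follow from Theorem \ref{def assump2}. The paper handles them by invoking \cite[Theorem 3.4]{RedigVolleringRWDRE2013}, which under the exponential coupling \eqref{eq QCLT condition} gives exponential relaxation of the environment process to $\mu^{EP}$; this is where Redig--V\"ollering enters, not for the non-degeneracy of $\Sigma$ (that is part of the conclusion of \cite[Theorem 1]{DolgopyatKellerLIveraniRWME2008} once (A0)--(A5) hold).

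A second, smaller issue: your route to \eqref{eq bcm} appeals to ``the spatial-mixing content of (A3)--(A4)'' to control sites that are close in time but far in space. This is the right obstacle to flag, but the argument as written is not complete---(A3) is a mixing condition on the stationary measure $\mu$, not a coupling statement for $\bP^\sigma$ versus $\bP$, and your union bound over sites in $\Lambda(l)$ is over infinitely many sites at each time level, so the single-site estimate $Ce^{-cs}$ alone does not sum. The paper's framework handles this through finite speed of propagation (which is what the locality condition (A4) encodes); in the Markovian setting the cleaner route is Theorem \ref{thm markov 1} rather than Theorem \ref{def assump2} directly, since it works at a single time slice and only needs the $t^{d-1}$-weighted sum \eqref{eq t to the d-1}, which is immediate from exponential decay.
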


Conditions (A3)-(A4)  in \cite{DolgopyatKellerLIveraniRWME2008} are mixing assumptions on the dynamic random environment $(\eta_t)$. Condition (A3) is a (weak) mixing assumption on $\mu$, whereas Condition (A4) ensures that $(\eta_t)$ is ``local''.  
For the precise definitions we refer to \cite{DolgopyatKellerLIveraniRWME2008}, page 1681.

In \cite{DolgopyatKellerLIveraniRWME2008}, Theorem 2, the statement of Corollary \ref{thm qclt} was proven in a perturbative regime. Corollary \ref{thm qclt} extends their result as there are no restrictions (other than ellipticity) on the transition probabilities of the random walk. We expect that Corollary \ref{thm qclt} can be further improved to a functional CLT assuming only a polynomial decay in \eqref{eq QCLT condition}.

\section{Examples and applications}\label{sec examples}

In this section we present examples of environments which satisfy the conditions of Theorem \ref{def assump1} and Theorem \ref{def assump2}. Particular emphasise is put on environments associated to a hidden Markov model for which we can improve on the necessary mixing assumptions.

\subsection{Environments i.i.d.\ in space}\label{sec iid space}

The influence of the dimension on required mixing speeds is somewhat subtle.
On the one hand, the random walk observes only a local area, and, in the case
of conservative particle systems like the exclusion process, one can expect
that in high dimensions information about observed particles in the past
diffuses away. On the other hand, the higher dimension, the more sites the
random walk can potentially visit in a fixed time. Furthermore, a comparison
with a contact process or directed percolation gives an argument that
information can spread easier in higher dimensions, hence observations along
the path of the random walk could have more influence on future observations
if the dimension increases.

This problem becomes significantly easier when the environment is assumed to
be i.i.d. in space, that is  $\bP = \bigtimes_{x \in \mathbb{Z}^{d}} \bP_o$, and $\bP_o
\in \mathcal{M}_1(E^{\integers})$
is the law of $(\eta_t(x))_{t\in\bZ}$ for any $x\in\bZ^d$.
\begin{thm}\label{thm iid space}
Assume that $\bP = \bigtimes_{x\in\bZ^d} \bP_o$ and that
\begin{align}\label{eq iid space}
\sum_{t \geq 1} \sup_{B \in \mathcal{G}_{\geq t}, A\in \mathcal{G}_{< 0}}
\left| \bP_o(B \mid A ) - \bP_o(B)\right| <\infty ,
\end{align}
where $\mathcal{G}_{\geq t}$ ($\mathcal{G}_{<0}$) is the $\sigma$-algebra 
of $E^\bZ$ generated by the values after time $t$ (before time 0)
with respect to $\bP_o$.
Then \eqref{eq bcm} holds.
\end{thm}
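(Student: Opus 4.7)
The plan is to exploit the spatial product structure of $\bP$ to reduce \eqref{eq bcm} to a column-wise mixing question controlled by \eqref{eq iid space}. For $A \in \mathcal{A}_{-\infty}$ arising from a backward trajectory $\gamma \in \Gamma_k$, factorise $A = \bigcap_{y \in T_A} A_y$ over its finite spatial support
\[
T_A := \bigcup_{s=1}^{k}\bigl(\gamma_{-s}+[-R,R]^d\bigr) \subset \bZ^d,
\]
with each $A_y$ a cylinder in the single-column $\sigma$-algebra at site $y$, supported at times $\leq -s_y^*$, where $s_y^* := \min\{s \geq 1 : y \in \gamma_{-s}+[-R,R]^d\}$. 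Since $\bP(A) > 0$, each $\bP_o(A_y) > 0$; spatial independence makes $\bP(\cdot\mid A)$ a product measure, and I would build a product coupling $\Pi$ of $\bP$ and $\bP(\cdot\mid A)$: the identity coupling off $T_A$, and on each $y \in T_A$ an optimal coupling $\Pi_y$ of $\bP_o$ with $\bP_o(\cdot\mid A_y)$.

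To quantify each $\Pi_y$, note that, reducing to cylinders, any $B \in \mathcal{F}_{\Lambda(l)}$ depends on column $y$ only through $\mathcal{G}_{\geq t_y}$, where $t_y := \max(0, l - \|y\|_1)$, because $(y, t) \in \Lambda(l)$ forces $t \geq t_y$. Since each trajectory step has $\ell^1$-length $\leq R$, one has $\|y\|_1 \leq (s_y^* + d)R$, hence $s_y^* \geq \|y\|_1/R - d$. By the time-shift invariance of $\bP_o$ inherited from the translation invariance of $\bP$, the mixing coefficient \eqref{eq iid space} delivers
\[
\Pi_y\bigl(\omega|_{[t_y,\infty)} \neq \omega'|_{[t_y,\infty)}\bigr) \leq \phi_o(t_y + s_y^* - 1),
\]
where $\phi_o(\tau) := \sup_{B' \in \mathcal{G}_{\geq \tau},\, A' \in \mathcal{G}_{<0}} |\bP_o(B'\mid A') - \bP_o(B')|$ is summable by hypothesis.

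Agreement of the two copies at every $y \in T_A$ on $[t_y, \infty)$ forces them to agree on $\Lambda(l)$ and thus on $B$, so a union bound gives
\[
\bigl|\bP(B\mid A) - \bP(B)\bigr| \leq \sum_{y \in T_A} \phi_o(t_y + s_y^* - 1).
\]
I would estimate the right-hand side uniformly in $A$ and $B$ by grouping columns by $s_y^* = s$: each group has at most $(2R+1)^d$ elements (all inside $\gamma_{-s}+[-R,R]^d$), and $t_y + s_y^* \geq g(s) := \max\bigl(s,\, l - dR - s(R-1)\bigr)$, whose minimum over $s \geq 1$ is $\geq l/R - d$. The column sum thus reduces to $(2R+1)^d \sum_{s \geq 1} \phi_o(g(s)-1)$, which I would split at $s^* \sim l/R$: the large-$s$ branch $\sum_{s > s^*} \phi_o(s-1)$ vanishes as $l \to \infty$ by summability; for $R \geq 2$ the small-$s$ branch is a sum of $\phi_o$-values at distinct integers $\geq l/R - d - 1$ spaced by $R-1$ and again vanishes by summability, while for $R = 1$ the small-$s$ branch is $(l-d)\phi_o(l-d-1)$, vanishing by the standard consequence $n\phi_o(n) \to 0$ of summability plus monotonicity.

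The main obstacle is that $|T_A|$ grows with the trajectory length $k$, so a naive union bound at a single $\phi_o$-value cannot suffice. The resolution is geometric: columns far from the origin can only be touched by walker visits in the distant past, so their coupling gap is automatically large, and the cone shape of $T_A$ combined with the finite range $R$ collapses the column sum into a rapidly convergent one-dimensional series in $\phi_o$.
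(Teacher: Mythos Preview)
Your proposal is correct and follows essentially the same route as the paper's proof: both exploit the spatial product structure to factor $A=\bigcap_y A_y$ over columns, build a product of optimal column-wise couplings of $\bP_o$ with $\bP_o(\cdot\mid A_y)$, and reduce $|\bP(B\mid A)-\bP(B)|$ to a union bound over columns controlled by the single-site mixing coefficient $\phi_o$. Your treatment is in fact more explicit than the paper's in the final step: you organise the column sum by the first-visit time $s_y^*$, use the geometric bound $\|y\|_1\leq (s_y^*+d)R$ to track the time shift, and carry out the splitting at $s^*\sim l/R$ carefully (including the $R=1$ case via $n\phi_o(n)\to 0$, which is legitimate since $\phi_o$ is automatically non-increasing from its definition as a supremum over shrinking $\sigma$-algebras). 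The paper instead indexes the sum over all visits $(x,t)$ along the path and asserts convergence more tersely; your accounting by first visits avoids the overcounting and makes the convergence transparent.
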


Observe that \eqref{eq iid space} does not depend on the dimension. This is in contrast to the cone mixing condition of \citet{CometsZeitouniLLNforRWME2004}, where an additional factor $t^d$ inside the sum of \eqref{eq iid space} is required. In Subsection \ref{sec examples slow mixing} we present a class of environments which have arbitrary slow polynomial mixing, thus showing that Theorem \ref{thm iid space} yields an essential improvement.

\subsection{Hidden Markov models}\label{sec HMM}
When $\bP$ is the path measure of a stochastic process $(\eta_t)$ evolving on $\Omega_0$, the results of Subsection \ref{sec results Florian} can be improved.
In this subsection we discuss in detail the case where the random environment is governed by a hidden Markov model.

The environment $(\eta_t)$ is a hidden Markov model if it is given via a function of a Markov chain $(\xi_t)$. To be more precise, let $\tE$ be a Polish space, $\tOmega_0=\tE^{\bZ^{\td}}$ with $\td\geq d$, and $\tOmega=\tOmega_0^\bZ$. Denote by $\tilde{\mathcal{F}}$ the corresponding $\sigma$-algebra. 
We assume that the Markov chain $(\xi_t)$ is defined on $\tOmega$ with law $\tP_\xi$ and is ergodic with law $\tmu \in \mathcal{M}_1(\tOmega_0)$. Here $\xi \in \tOmega_0$ denotes the starting configuration. Let $\Phi:\tOmega_0\to\Omega_0=E^{\bZ^d}$ be a translation invariant map and let  $\eta_t=\Phi(\xi_t)$. We call $(\eta_t)$ a \emph{hidden Markov model}, which has $\mu$ as the induced measure on $\Omega_0$ as invariant measure.
We assume throughout that $\Phi$ is of finite range, that is, the function $\Phi(\cdot)(o)$ is $\tilde{\mathcal{F}}_{\Lambda}$-measurable for some $\Lambda \subset \integers^{\tilde{d}}$ finite.

\begin{rem}\label{rem hmm map}
When $\tilde{E}$ is finite, the canonical choice of $\Phi$ is the identity map. However, our setup opens for more sophisticated choices. One example is the projection map. For instance, if $\tilde{d}>1$ and $d=1$, one can consider the hidden Markov model given by $\eta_t(x) = \xi_t(x,0,\dots,0)$. In other words, the random walk only observes the environment in one coordinate.
\end{rem}

Condition \eqref{eq bcm} in Theorem \ref{def assump2} is an infinite volume condition which can be hard to verify by direct computation. The next result yields a sufficient condition which only needs to be checked for single site events. For its statement, we first introduce the concept of $\bP\in \mathcal{M}_1(\Omega)$ having \emph{finite speed of propagation}.

\begin{defn}
We say that $\bP \in \mathcal{M}_1(\Omega)$ has finite speed of propagation if the following holds: for some $\alpha>0$, and for each  $A\in \mathcal{F}_{< 0}$ and $A' \in \mathcal{F}_{\Lambda(\alpha t,t)}$, where $\Lambda(\alpha t,t) := \{ (x,s) \in \bH \colon \norm{ x }_1 \geq \alpha t, 0 < s \leq t \}$,  there is a coupling $\hP_{A,A'}$  of $\bP(\cdot \mid A,A')$ and $\bP(\cdot \mid A)$ such that
\begin{align}\label{eq finite speed of propagation}
\sum_{t \geq 1} t^d \sup_{A \in \mathcal{F}_{<0}, A' \in \mathcal{F}_{\Lambda(\alpha t,t)}}  \hP_{A,A'} \left(\eta_t^1(o) \neq \eta_t^2(o) \right) <\infty.
\end{align}
Furthermore, any such coupling satisfies $\hP_{A,A'}(\cdot) = \hP_{\theta_{x,s}A, \theta_{x,s}A'}(\theta_{x,s} \cdot)$ for all $(x,s) \in \mathbb{Z}^{d+1}$, where $\omega \in \theta_{x,s} A$ if and only if $\theta_{-x,-s} \omega \in A$.
\end{defn}

Finite speed of propagation is a natural assumption for many physical applications. Note that, for many interacting particle systems there is a canonical coupling given by the so-called graphical representation coupling.

\begin{cor}\label{thm:HMM-2}
Assume that $(\xi_t)$ has finite speed of propagation and that
\begin{align}\label{eq hhm estimate}
\sum_{t \geq 1} t^d \sup_{A \in \mathcal{A}_{-\infty}^{-t}} \hP_{\Omega, A} \left( \Phi(\xi^1)_0(o) \neq \Phi(\xi^2)_0(o) \right)< \infty.
\end{align}
Then \eqref{eq bcm} holds for $(\eta_t) = \Phi(\xi_t)$.
\end{cor}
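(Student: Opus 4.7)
The plan is to establish \eqref{eq bcm} by constructing, for each $A\in\mathcal{A}_{-\infty}$, a coupling of $\bP(\cdot\mid A)$ and $\bP(\cdot)$ at the level of the underlying Markov chain $(\xi_t)$ and carrying out a site-by-site union bound. Since $\Phi$ has finite range, the random variable $\eta^i_s(y)=\Phi(\xi^i_s)(y)$ depends on $\xi^i_s$ only through its restriction to a finite neighbourhood of $y$. Hence any cylinder event $B\in\mathcal{F}_{\Lambda(l)}$ supported on a finite set $V\subset\Lambda(l)$ satisfies
\[
|\bP(B\mid A)-\bP(B)|\;\le\;\widehat{\bP}_{\Omega,A}(\eta^1\not\equiv\eta^2\text{ on }V)\;\le\;\sum_{(y,s)\in V}\widehat{\bP}_{\Omega,A}(\eta^1_s(y)\ne\eta^2_s(y)).
\]
Approximating general $B\in\mathcal{F}_{\Lambda(l)}$ by cylinders reduces \eqref{eq bcm} to showing that $\sum_{(y,s)\in\Lambda(l)}\sup_{A\in\mathcal{A}_{-\infty}}\widehat{\bP}_{\Omega,A}(\eta^1_s(y)\ne\eta^2_s(y))\to 0$ as $l\to\infty$.

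Translation invariance of $\bP$ and of the coupling family (built into the finite speed of propagation definition) reduces each summand to a single-site estimate at $(o,0)$ with the shifted event $\theta_{-y,-s}A$. I would then split $\Lambda(l)$ according to whether $(y,s)$ lies in the walker cone, that is, whether $-y$ can be written as a sum of $s$ elements of $\mathcal{R}$. For \emph{inside-cone} sites, prepending to the original walker path a connecting walker sub-path from $-y$ at time $-s$ to $o$ at time $0$ exhibits $\theta_{-y,-s}A$ as an element of $\mathcal{A}_{-\infty}^{-(s+1)}$, so \eqref{eq hhm estimate} yields a bound $\beta_{s+1}$ with $\sum_t t^d\beta_t<\infty$. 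Since the number of inside-cone sites in $\Lambda(l)$ at time $s$ is $O(s^d)$ and such sites only arise for $s\gtrsim l/(R+1)$, the inside-cone contribution is dominated by the tail $\sum_{s\ge l/(R+1)} s^d\beta_{s+1}$, which vanishes as $l\to\infty$.

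For \emph{outside-cone} sites the path extension is no longer available, and instead the finite speed of propagation assumption on $\xi$ takes over. The support of $A$ is localised along a backward walker trajectory rooted at $(o,0)$ in the past, whereas $(y,s)$ lies outside the forward $\alpha$-cone from that support. I would condition on a suitable intermediate time slice of $\xi$ to exploit the Markov structure to decouple the past information carried by $A$ from the forward evolution at $(y,s)$; the graphical coupling underlying \eqref{eq finite speed of propagation} then bounds the $\xi$-disagreement at $(y,s)$ by the finite speed summability, and the finite range of $\Phi$ carries this over to $\eta$.

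The main obstacle is this outside-cone step: \eqref{eq finite speed of propagation} is stated as a three-way coupling of $\bP(\cdot\mid A,A')$ and $\bP(\cdot\mid A)$, whereas what is required for \eqref{eq bcm} is a two-way coupling of $\bP(\cdot\mid A)$ against $\bP(\cdot)$ controlled at a spatially distant point. Translating between the two relies on the Markov property of $\xi$ together with the translation-covariance of the coupling family. Once this is in place, the two hypotheses complement each other cleanly: \eqref{eq hhm estimate} controls temporal decay inside the walker cone through the HMM projection, while \eqref{eq finite speed of propagation} controls spatial escape outside the cone, with the common $t^d$ bookkeeping precisely matched to the number of sites of $\Lambda(l)$ on each time slice, delivering $\phi(l)\to 0$.
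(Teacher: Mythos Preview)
Your geometric split into ``inside-cone'' and ``outside-cone'' sites is genuinely different from the paper's argument, and your inside-cone step---embedding the translated observation event into $\mathcal{A}_{-\infty}^{-(s+1)}$ by prepending a connecting walker segment---is correct and rather elegant. The difficulty is that the outside-cone step does not go through. Finite speed of propagation, as formulated in \eqref{eq finite speed of propagation}, compares $\bP(\cdot\mid A,A')$ with $\bP(\cdot\mid A)$ when the \emph{extra} conditioning $A'$ is spatially far in the forward half-space; it says nothing about comparing $\bP(\cdot\mid A)$ with $\bP(\cdot)$ when $A$ lies in the past. Conditioning on past observations along the walker trajectory perturbs the law of $\xi_0$ at \emph{every} site, through the spatial correlations of the stationary measure (equivalently, the backward light cones of the observed sites run to $-\infty$). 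Your proposed conditioning on an intermediate time slice does not help: the conditional law of that slice under $A$ can differ from the unconditional law arbitrarily far from the origin, and FSP gives no handle on this spatial spread.

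The paper avoids this by never splitting $\Lambda(l)$ geometrically. Instead, for \emph{every} site $(x,t)\in\Lambda(l)$ at scale $\|(x,t)\|_1\asymp\alpha s$, it splits the \emph{conditioning} via an intermediate event $A'\in\mathcal{A}_{-\infty}^{-s}$ (the far-past tail of $A$) and a triple coupling, giving
\[
\hP_{A,\Omega}\bigl(\eta^1_t(x)\neq\eta^2_t(x)\bigr)\;\le\;\hP_{A,A'}\bigl(\eta^1_t(x)\neq\eta^2_t(x)\bigr)+\hP_{A',\Omega}\bigl(\eta^1_t(x)\neq\eta^2_t(x)\bigr).
\]
The first term is controlled by FSP because the \emph{additional} information $A\setminus A'$ sits in a bounded space-time region near the origin, so after a shift it fits the form of \eqref{eq finite speed of propagation}. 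The second term is controlled by \eqref{eq hhm estimate} after translation. Both contributions carry the $t^d$ factor, and the argument works uniformly over $\Lambda(l)$ without a cone/non-cone distinction. Your inside-cone trick is essentially a special case of the second term when $A'=A$; what is missing from your proposal is precisely the triangle-inequality step that lets FSP and the mixing hypothesis act in tandem at every site.
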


\begin{rem}
The measure $\hP_{\Omega,A}$ denotes the coupling of $\bP(\cdot)$ and $\bP(\cdot \mid A)$.
\end{rem}

Corollary \ref{thm:HMM-2} follows by a  slightly more general statement, see Theorem \ref{thm fsp}. This approach can also be used in cases where $(\xi_t)$ does not have finite speed of propagation. In such cases, \eqref{eq hhm estimate} is sufficient for \eqref{eq cone mixing Florian} to hold.
Observe also that, by applying the projection map introduced in Remark \ref{rem hmm map}, the dimensionality dependence in Condition \eqref{eq hhm estimate} can be replaced by the dimensionality of the range of the random walk. 

\subsubsection*{Markovian environment}
If $\bP$  is the path measure of a Markov chain $(\eta_t)$, we can weaken the mixing assumption. In such cases, we consider $\alpha$ as a function from $\Omega_0 \times \mathbb{Z}^{d}$. 
 Because the Markov property allows us to look at the invariant measure  of the environment process just at a time $0$ instead of in the entire upper half-space $\bH$, we have the following mixing condition. Here we denote by $\mathcal{F}_{=0}^{\infty}$ the tail-$\sigma$-algebra of $\mathcal{F}_{=0} := \mathcal{F}_{\mathbb{Z}^{d} \times \{0\}}$.

\begin{thm}\label{thm markov 1}
Assume that $(\eta_t)$ is a Markov chain with ergodic invariant measure $\mu \in \mathcal{M}_1(\Omega_0)$.
Further, assume that the path measure $\bP_{\mu} \in \mathcal{M}_1(\Omega)$ has finite speed of propagation and that
\begin{align}\label{eq t to the d-1}
\sum_{t \geq 1} t^{d-1} \sup_{A \in \mathcal{A}_{-\infty}^{-t}} \hP_{\Omega,A}\left( \eta_0^1(o)  \neq \eta_0^2(o) \right) < \infty.
\end{align}
Then there exists  $\mu^{EP} \in \mathcal{M}_1(\Omega_0)$ invariant for the Markov chain $(\eta_t^{EP})$  such that  $\mu^{EP}$ agrees with $\mu$ on $\mathcal{F}^{\infty}_{=0}$. 
If in addition $(X_t)$ is elliptic, then $\mu^{EP}$ and $\mu$ are mutually absolutely continuous and $\mu^{EP}$ is ergodic with respect to $(\eta_t^{EP})$.
\end{thm}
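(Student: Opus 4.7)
The approach exploits the Markov property to reduce finding a path-level invariant measure to finding a time-$0$ invariant measure on $\Omega_0$. In the Markov setting, $(\eta_t^{EP}) = (\theta_{X_t}\eta_t)$ is itself a Markov chain on $\Omega_0$; let $T$ denote its transition operator. Since $\Omega_0 = E^{\bZ^d}$ is compact ($E$ finite), the Krylov--Bogolyubov theorem applied to the Cesaro averages $\frac{1}{n}\sum_{k=0}^{n-1}\mu T^k$ produces a $T$-invariant weak subsequential limit $\mu^{EP}$. This handles existence.

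The crux is to show $\mu^{EP} = \mu$ on $\mathcal{F}_{=0}^\infty$, for which it suffices to establish that $\sup_{B \in \mathcal{F}_{\Lambda(l,0)}}\sup_{k}|\mu T^k(B) - \mu(B)| \to 0$ as $l \to \infty$, where $\Lambda(l,0) := \{x \in \bZ^d : \|x\|_1 \geq l\}$. Conditioning on the walker's trajectory $\gamma$ of length $k$, the observations along $\gamma$ form an event in $\mathcal{A}_{-k}^{-1}(\gamma)$; translating so that $\gamma_k$ at time $k$ maps to the origin reduces the problem to bounding the influence of a past observation event $A \in \mathcal{A}_{-k}^{-1}(\gamma')$ on the distribution of the environment on the single time-$0$ slice indexed by $\Lambda(l,0)$. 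I would use finite speed of propagation to handle past observations whose spatial location falls well outside the backwards cone from each target site, and the single-site decay \eqref{eq t to the d-1} for observations deep in the past, with a union bound over target sites $y$ with $\|y\|_1 \geq l$ and past times yielding the desired summable control. The improvement from $t^d$ (in Corollary \ref{thm:HMM-2}) to $t^{d-1}$ reflects that coupling errors are summed over the $d$-dimensional tail slice $\Lambda(l,0)$ rather than the $(d+1)$-dimensional forward region $\Lambda(l)$.

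In the elliptic case, mutual absolute continuity follows by adapting the argument of Theorem \ref{def assump2} to the single time slice: ellipticity of $(X_t)$ guarantees that the Radon--Nikodym derivative of $\mu^{EP}$ with respect to $\mu$, which is well-defined on $\mathcal{F}_{=0}^\infty$ by agreement there, extends without singular part to the full $\sigma$-algebra on $\Omega_0$. Ergodicity of $\mu^{EP}$ then follows by a 0-1 law: any $T$-invariant event is tail-measurable modulo null sets (using ellipticity so that the walker can propagate information to every site), and on the tail $\mu^{EP}$ coincides with the ergodic measure $\mu$. The main technical obstacle I anticipate is the bookkeeping in the second step --- combining finite speed of propagation with \eqref{eq t to the d-1} while avoiding a loose union bound that would reproduce the $t^d$ scaling of the HMM case. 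Squeezing out the extra power of $t$ requires carefully exploiting that the target is fixed at a single time $0$, so that only a $d$-dimensional slice of coupling errors contributes rather than the full $(d+1)$-dimensional forward region traced out by the walker.
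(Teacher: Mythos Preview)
Your proposal is correct and follows essentially the same approach as the paper: Cesaro averages on the compact space $\Omega_0$ for existence, then the expansion of Theorem~\ref{thm expansion} (your ``conditioning on the walker's trajectory'') combined with finite speed of propagation and \eqref{eq t to the d-1} to control $\sup_{B\in\mathcal{F}_{\Lambda_0(l)}}|\bP(B\mid A)-\bP(B)|$, with the $t^{d-1}$ gain coming precisely from summing over the $d$-dimensional slice $\Lambda_0(l)$ rather than the $(d+1)$-dimensional region $\Lambda(l)$. The paper handles the elliptic case via Lemma~\ref{lem extended expansion} and Lemma~\ref{lem thm 4.2 help} to pass from non-singularity on $\mathcal{F}_{\Lambda_0(l)}$ to mutual absolute continuity on $\mathcal{F}_{=0}$, and then obtains ergodicity by the argument of Theorem~\ref{def assump1}---exactly the adaptations you outline.
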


It is important to note that \eqref{eq t to the d-1} (as well as \eqref{eq hhm estimate}) does not require $(\eta_t)$ to be uniquely ergodic. However, if for every $\sigma,\xi\in \Omega_0$, there is a coupling $\hP_{\sigma,\xi}$ of $\bP_\sigma$ and $\bP_\xi$ which satisfies the finite speed of propagation property and
\begin{align}\label{eq improve1a}
\sum_{t=1}^\infty t^{d-1} \sup_{\sigma,\xi\in\tOmega_0} \hP_{\sigma,\xi}(\eta_t^1(o)\neq\eta_t^2(o))<\infty,
\end{align}
then it follows, under the assumptions of Theorem \ref{thm markov 1}, that $(\eta_t^{EP})$ is uniquely ergodic. 

Equation \eqref{eq improve1a} should be compared with Assumption 1a in \cite{RedigVolleringRWDRE2013}, that is;
\begin{equation}\label{eq 1a}
\int_{0}^{\infty} t^{(d)} \sup_{\eta,\xi} \hat{\mathbb{E}}_{\eta,\xi} \rho(\eta_t^1(o),\eta_t^2(o)) dt < \infty,
\end{equation}
where $\rho \colon E \times E \rightarrow [0,1]$ is the distance function.
Their  assumption was used to show (among others) the existence of $\mu^{EP} \in \mathcal{M}_1(\Omega_0)$ invariant and ergodic for the Markov chain $(\eta_t^{EP})$, see Lemma 3.2 therein. 
Note in particular that Assumption \eqref{eq 1a} has $t^{(d)}$ inside the integral, whereas \eqref{eq improve1a} only requires $t^{(d-1)}$. 

\subsection{Polynomially mixing environments}\label{sec examples slow mixing}
As example of environments which fully utilise the polynomial mixing assumption of Theorem \ref{thm iid space} and Corollary \ref{thm:HMM-2}, we consider layered environments. These were already considered in \cite{RedigVolleringRWDRE2013} for the same purpose, but since we are in a different setting we use the setting of hidden Markov models.

The idea of layered environments is that, given a summable sequence $(b_n)\subset(0,1)$, for each layer $n$, the process $(\xi_t(\cdot,n))_{t\in \integers_{\geq 0}}$ is an uniform exponentially mixing Markov chain on $[-1,1]$ with an exponential relaxation rate $b_n$, and independent layers. For simplicity, in this example, we choose $\xi_t(\cdot,n)$ to be i.i.d. spin flips, that is, for each $x \in \mathbb{Z}^{\tilde{d}}$,
\begin{align}
\xi_{t+1}(x,n)&=\begin{cases}
\xi_t(x,n),\quad&\text{with probability}\quad 1- b_n;\\
\text{Unif}[-1,1],\quad&\text{with probability}\quad b_n;
\end{cases}
\end{align}
independent for all $x,n,t$. In other words, at each time step the spin retains its old value with probability $1-b_n$ and chooses uniformly on $[-1,1]$ with probability $b_n$.

In the context of the previous subsection we thus have $\tE:=[-1,1]^\bN$. We further choose $\td=d\geq 1$, $E=\{0,1\}$ and set, for a summable sequence $(a_n) \subset(0,1)$,
\begin{align} \Phi(\xi)(x)=\ind_{\sum_{n=1}^\infty a_n \xi(x,n)>0}.\end{align}
The behaviour of this kind of processes is then determined by the two sequences $(a_n)$ and $(b_n)$. 
When $a_n=\frac12 n^{-\alpha}$, $b_n=\frac12 n^{-\beta}$ for some $\alpha,\beta>1$, we have the following bound on the mixing of $(\eta_t)$.

\begin{thm}\label{thm slow mixing}
There are constants $0<c_1<c_2<\infty$ so that
\begin{align}
c_1 t^{\frac{-\alpha+1}{\beta}} \leq \sup_{\xi,\sigma}\norm{\bP_\xi(\eta_t(0)\in\cdot) - \bP_{\sigma}(\eta_t(0)\in \cdot)}_{TV} \leq c_2 t^{\frac{-\alpha+1}{\beta}} (\log t)^{\frac{\alpha-1}{\beta}}.
\end{align}
Here $\norm{\cdot}_{TV}$ is the total variation distance between the two distributions.
In particular, if $\alpha>\beta+1$, then \eqref{eq bcm} holds.
\end{thm}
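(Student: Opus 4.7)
The plan is to prove two-sided pointwise mixing bounds via a single coupling argument and then deduce \eqref{eq bcm} via Theorem \ref{thm iid space}, which is available because the environment is a product across spatial sites. Fix $x = o$, write $S^i_t := \sum_n a_n \xi^i_t(o,n)$, and couple two copies $(\xi^1),(\xi^2)$ by the basic coupling: layer by layer, the two chains share their Bernoulli$(b_n)$ resample clock and pick the same uniform value on $[-1,1]$ at every shared resample. Let $T_n$ be the first common resample time of layer $n$. Once $T_n \leq t$ the copies agree on that layer, so $D_t := S^1_t - S^2_t$ is supported on uncoupled layers; splitting off the common coupled part $U_t := \sum_{T_n \leq t} a_n \xi^1_t(o,n)$, the sign-disagreement event $\{\eta^1_t(o) \neq \eta^2_t(o)\}$ becomes $\{U_t \in I_t\}$ for a random interval $I_t$ of length $|D_t|$. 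Conditionally on the other coupled layers, the $n=1$ term contributes a uniform component on $[-a_1,a_1]$ (layer $1$ couples in $O(1)$ steps), so $U_t$ has conditional density bounded above by $1/(2a_1) = 1$ and bounded below by a positive constant $c_0$ on a neighbourhood of $0$.

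For the upper bound I would choose the cutoff $N = N(t) := \lceil (t/(C\log t))^{1/\beta} \rceil$. A union bound gives $\bP\bigl(\bigcup_{n \leq N}\{T_n > t\}\bigr) \leq N(1-b_N)^t \leq C N^{1-\alpha}$, while on the complement $|D_t| \leq 2 \sum_{n > N} a_n \leq CN^{1-\alpha}$ deterministically; the sign-disagreement probability is thus $O(\bE|D_t|) = O(N^{1-\alpha}) = O(t^{(1-\alpha)/\beta}(\log t)^{(\alpha-1)/\beta})$. For the lower bound I would take $\xi(o,n) \equiv 1$ and $\sigma := -\xi$; then $D_t = 2\sum_{T_n > t} a_n \geq 0$, so $S^1_t \geq S^2_t$ pointwise, the TV distance equals the sign-disagreement probability itself, and by the density lower bound it is at least $c_0 \bE|D_t| = 2c_0 \sum_n a_n(1-b_n)^t$. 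Comparison with $\int u^{-\alpha} e^{-u^{-\beta}/2}\,du$ gives $\sum_n a_n (1-b_n)^t = \Theta(t^{(1-\alpha)/\beta})$, yielding the matching lower bound.

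To obtain \eqref{eq bcm} from these pointwise bounds, the environment is i.i.d. across spatial sites, so Theorem \ref{thm iid space} reduces the matter to summability of the single-site $\phi$-mixing coefficients $\phi_o(t) := \sup_{B \in \mathcal{G}_{\geq t}, A \in \mathcal{G}_{<0}} |\bP_o(B|A) - \bP_o(B)|$. By the Markov property of $(\xi_t)$, $|\bP_o(B|A) - \bP_o(B)| \leq |\bE[g_B(\xi_0)\mid A] - \bE[g_B(\xi_0)]|$ with $g_B(\xi_0) := \bP_o(B \mid \xi_0)$; the key observation is that $g_B$ depends on $\xi_0$ only through the one-dimensional projection $M(\xi_0) := \sum_n a_n (1-b_n)^t \xi_0(n)$ via a Lipschitz function of uniformly bounded constant, because after averaging over the $\xi_0$-independent future resample schedules the remaining dependence concentrates in this weighted sum. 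Bounding $|\bE[\xi_0(n)\mid A] - \bE[\xi_0(n)]| \leq 1$ then yields $\phi_o(t) \leq C\sum_n a_n(1-b_n)^t = O(t^{(1-\alpha)/\beta})$, summable precisely when $\alpha > \beta + 1$.

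The main obstacle is this Lipschitz-in-$M$ claim: a naive union bound over times $s \geq t$ would cost an extra factor of $t$ and force the stricter condition $\alpha > 2\beta + 1$. The saving relies on the hidden-Markov structure: the future $\eta$-values are determined by $\xi_0$ only through the weighted sum whose weight at layer $n$ is $(1-b_n)^s \leq (1-b_n)^t$ at each time $s \geq t$, and after averaging over future resample schedules this scalar statistic captures the full dependence on $\xi_0$ with a Lipschitz loss that is uniform in the future trajectory event $B$.
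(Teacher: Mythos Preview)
Your derivation of the two-sided bounds on the single-time total variation distance follows the paper almost verbatim: the same basic coupling via shared resample clocks, the same split into coupled layers $R_t$ and uncoupled layers $R_t^c$, the same density bound (you use the $n=1$ layer, the paper uses $\min_{n\in R_t}(2a_n)^{-1}$, both give a uniform constant), the same cutoff $N\asymp(t/\log t)^{1/\beta}$, and the same lower-bound computation from the extremal pair $\pm\mathbf 1$. That part is fine.

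The gap is in the ``in particular'' clause. The paper does not spell this step out, but your proposed route through Theorem~\ref{thm iid space} requires controlling
\[
\phi_o(t)=\sup_{B\in\mathcal G_{\ge t}}\sup_{A\in\mathcal G_{<0}}\bigl|\bP_o(B\mid A)-\bP_o(B)\bigr|,
\]
where $B$ ranges over events about the \emph{entire} future $(\eta_s(o))_{s\ge t}$, not just $\eta_t(o)$. Your key claim---that $g_B(\xi_0):=\bP_o(B\mid\xi_0)$ depends on $\xi_0$ only through the scalar $M(\xi_0)=\sum_n a_n(1-b_n)^t\xi_0(n)$---is false. Already for $B=\{\eta_t(o)=1\}$ one has $g_B(\xi_0)=\bP\!\left(\sum_n a_n Y_n>0\right)$ with $Y_n=\xi_0(n)\ind_{T_n>t}+U_n\ind_{T_n\le t}$; the law of $\sum_n a_n Y_n$ depends on every coordinate $\xi_0(n)$ separately, not merely on the conditional mean $M(\xi_0)$. (Two configurations with the same $M$ but different individual entries give different laws for $\sum a_n Y_n$.) For multi-time events $B$ the dependence is even richer. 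Consequently the subsequent step ``$|\bE[\xi_0(n)\mid A]-\bE[\xi_0(n)]|\le1$ yields $\phi_o(t)\le C\sum_n a_n(1-b_n)^t$'' is unjustified.

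What \emph{does} go through is the layer-by-layer Lipschitz bound you allude to: changing $\xi_0(m)$ by $\epsilon$ and coupling the futures gives $|g_B(\xi_0)-g_B(\xi_0')|\le C\,(1-b_m)^t\,b_m^{-1}a_m|\epsilon|$, because the disagreement requires $T_m>t$ (cost $(1-b_m)^t$) and then a sign flip at some time $s\in[t,T_m)$ (cost $\lesssim a_m|\epsilon|$ per unit time, over an interval of expected length $b_m^{-1}$). Summing over layers gives $\phi_o(t)\lesssim\sum_m a_m b_m^{-1}(1-b_m)^t\asymp t^{(\beta-\alpha+1)/\beta}$, which is summable exactly when $\alpha>2\beta+1$---the ``stricter condition'' you yourself flag. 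Your claimed saving to $\alpha>\beta+1$ rests on the incorrect scalar-reduction; without it the argument does not close at that threshold.
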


\subsection{Independent Ornstein-Uhlenbeck processes}\label{sec OU}
With the approach of environments as hidden Markov models, we can also allow for unbounded state spaces where the environment does not mix uniformly, as long as the random walk transition function is simple enough. Here we choose an underlying environment of independent Ornstein-Uhlenbeck processes $(\xi_t^x)_{t\in \bR}$ for each site $x\in\bZ^d$, and the jump rates depend only on the signs, that is, \[\eta_t(x)=\sign(\xi^x_t):=1-2\ind_{\xi^x_t<0}, \quad t \in \integers.\] 
To state the example more formally, we have $\tilde E =\bR$ and $E=\{-1,1\}$, and 
\begin{align}
d\xi^x_t = -\xi^x_t dt + dW_t^x,
\end{align}
where $(W_t^x)_{t\in\bR}$, $x\in\bZ^d$, are independent two-sided Brownian motions. The stationary measure of $\xi_t^x$ is a normal distribution, and $\tilde \mu$ is the product measure of normal distributions.

\begin{thm}\label{lemma:OU}
Let $(\xi_t)_{t\in\bR}$ be an Ornstein-Uhlenbeck process and $\bP$ the two-sided path measure in stationarity. There are constants $c,C>0$ so that
\begin{align*}
\norm{\bP(\xi_t \in \cdot \;|\; A) - \bP(\xi_t \in \cdot)}_{TV} \leq C e^{-c t}
\end{align*}
for all $t\geq 0$ and any $A$ of the form $A= \{\sign(\xi_{-t_k})=a_k, 1\leq k\leq n\}$, $(t_k)$ increasing sequence with $t_1=0$ and $a_k\in \{-1,1\}$, $n$ arbitrary.
In particular, \eqref{eq bcm} holds for $(\eta_t)$.
\end{thm}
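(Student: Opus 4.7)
The plan is to use the Markov property of the OU process at time $0$ together with its explicit stochastic representation. Writing $\xi_t = e^{-t}\xi_0 + N_t$ with $N_t \sim N(0, 1-e^{-2t})$ independent of $\sigma(\xi_s : s \leq 0)$, and noting that $A$ lies in this past $\sigma$-algebra, one obtains the Markov representation
\[
\bP(\xi_t \in \cdot \mid A) = \int N(xe^{-t},\, 1-e^{-2t})(\cdot)\, \nu_A(dx),
\]
where $\nu_A$ is the conditional law of $\xi_0$ given $A$; the same formula holds unconditionally with $\nu_A$ replaced by the stationary Gaussian $\phi = N(0,1)$.

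Next I would apply the standard TV Lipschitz estimate for Gaussians with common variance: $\|N(xe^{-t},1-e^{-2t}) - N(ye^{-t},1-e^{-2t})\|_{TV} \leq C|x-y|e^{-t}/\sqrt{1-e^{-2t}} \leq Ce^{-t}|x-y|$ for $t\geq 1$. Combined with Kantorovich--Rubinstein duality this yields
\[
\norm{\bP(\xi_t\in\cdot\mid A) - \bP(\xi_t\in\cdot)}_{TV} \leq Ce^{-t}\,W_1(\nu_A, \phi) \leq Ce^{-t}\bigl(\bE[|\xi_0|\mid A] + \sqrt{2/\pi}\bigr),
\]
so the theorem reduces to a bound $\bE[|\xi_0|\mid A] \leq C$ that is uniform over sign events $A$ of the stated form.

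The main obstacle is this uniform first-moment estimate. From $A \subseteq \{\sign\xi_0 = a_1\}$ one immediately gets $\bE[|\xi_0|\ind_A] \leq 1/\sqrt{2\pi}$ and hence $\bE[|\xi_0|\mid A] \leq 1/(\sqrt{2\pi}\,\bP(A))$, but $\bP(A)$ may be arbitrarily small, so this crude bound is not uniform. To eliminate the $\bP(A)$ dependence I would exploit the OU structure via time reversal: conditional on $\xi_0 = x$, the past values $(\xi_{-t_k})_{k\geq 2}$ are jointly Gaussian with mean $(xe^{-t_k})_k$ and $x$-independent covariance, so $f_A(x) := \bP(A\mid \xi_0=x)$ is an explicit shifted Gaussian orthant probability whose $x$-dependence comes only through exponentially small coefficients $e^{-t_k}$. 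One then analyses the tilted half-Gaussian measure $\nu_A(dx) \propto f_A(x)\phi(x)\ind_{\sign x = a_1}dx$ and shows, via an explicit comparison of $f_A(x)$ with $f_A$ at a reference point using Mills ratio estimates and the joint Gaussian structure, that $\nu_A$ cannot concentrate on $|x|\gg 1$ regardless of the number, spacing, or signs of the $t_k, a_k$. This uniform Gaussian orthant computation is the crux of the proof.

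Finally, the ``In particular'' statement \eqref{eq bcm} for the environment $(\eta_t(x))_{x\in\bZ^d} = (\sign\xi^x_t)_{x\in\bZ^d}$ follows from spatial independence. Since the OU processes $(\xi^x_t)_{x\in\bZ^d}$ are independent across sites, the conditional law factorises as $\bP(\cdot\mid A) = \bigotimes_x \bP(\cdot_x\mid A_x)$, where $A_x$ collects the past observations at site $x$. The standard product-measure TV inequality reduces $|\bP(B\mid A) - \bP(B)|$ for $B\in\mathcal{F}_{\Lambda(l)}$ to a sum over sites of single-site TV distances. The cone structure of admissible paths in $\mathcal{A}_{-\infty}$ (jump range $\leq R$ per unit time) forces every site $y$ contributing to $B$ to be at time-distance $\Omega(l)$ from the walker's most recent past visit to $y$; applying the single-site lemma to each and summing over the $O(l^d)$ relevant sites yields $\phi(l) = Ce^{-cl}$ in \eqref{eq bcm}.
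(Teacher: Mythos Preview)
Your reduction is sound and parallels the paper's: both use the Markov property at time $0$ to reduce the TV estimate to uniform control of $\nu_A=\bP(\xi_0\in\cdot\mid A)$---you via the Gaussian TV--Lipschitz bound and $W_1$, the paper via a mirror coupling of two OU processes and the hitting time of $0$ for their difference. Your derivation of \eqref{eq bcm} from the single-site bound by product structure and the cone geometry likewise matches the paper's final step.

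The gap is at your declared crux, the uniform bound $\bE\big[|\xi_0|\,\big|\,A\big]\le C$. A pointwise comparison of $f_A(x)=\bP(A\mid\xi_0=x)$ to $f_A$ at a reference point does not deliver this directly: with all $a_k=+1$ and, say, $t_k=k$, the ratio $f_A(x)/f_A(1)$ grows with $x$ (the first $O(\log x)$ sign constraints are satisfied automatically), while for well-separated $t_k$ one has $f_A(x_0)\asymp 2^{-(n-1)}$ at any fixed $x_0$. Extracting a bound genuinely uniform in $n$, $(t_k)$ and $(a_k)$ from such comparisons therefore requires a further structural argument which your sketch does not supply. The paper provides it via stochastic domination: using drift comparison for one-dimensional diffusions one shows
\[
\bP(\xi_0\in\cdot\mid A)\ \preccurlyeq\ \bP(\xi_0\in\cdot\mid A_1)\ \preccurlyeq\ \bP(\xi_0\in\cdot\mid \overline A),
\]
where $A_1$ replaces every $a_k$ by $+1$ and $\overline A=\{\xi_s>0\ \text{for all }s\le 0\}$. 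The endpoint law $\bP(\xi_0\in\cdot\mid\overline A)$ has the explicit density $x\,e^{-x^2/2}$ on $[0,\infty)$ (cf.\ \cite{MandlSpectralTheory1961}) and does not depend on $A$; together with the symmetric lower bound this dominates $|\xi_0|$ under $\nu_A$ by a single fixed law and yields the missing uniform moment estimate in one stroke.
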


\subsection{The contact process}\label{sec examples CP}

As a second example of an environment with non-uniform space-time correlations and which do not satisfy the cone mixing property of \cite{CometsZeitouniLLNforRWME2004}, we consider the contact process $(\eta_t)$ on $\{0,1\}^{\Latd}$  with infection parameter $\lambda \in (0,\infty)$. 

The contact process is one of the simplest interacting particle systems exhibiting a phase transition. That is,
there is a critical threshold $\lambda_c(d) \in (0, \infty)$, depending on the dimension $d$, such that the following holds: if $\lambda\leq \lambda_c(d)$, then the contact process is uniquely ergodic with the measure concentrating on the configuration where all sites equal to $0$ as invariant measure. On the other hand, for all $\lambda > \lambda_c(d)$, the contact process is not uniquely ergodic. In particular, it has a non-trivial ergodic invariant measure, denoted here by $\bar{\nu}_{\lambda}$, also known as the upper invariant measure. As a general reference, and for a precise description of the contact process, we refer to \citet{LiggettSIS1999}.

Random walks on the contact process have recently been studied by  \citet{HollanderSantosRWCP2013} and \citet{MountfordVaresDCP2013}, where the one-dimensional random walk (i.e.\ on $\integers$) was shown to behave diffusively for all $\lambda > \lambda_c(1)$. See also  \citet{BethuelsenHeydenreichRWAD2015} for some results in general dimensions. 

The next theorem sheds new light on the behaviour of the environment process and the random walk for this model on $\Latd$ with $d\geq2$. In the theorem we make use of the projection map, as introduced in Remark \ref{rem hmm map}. That is, we assume $\tilde{d}\geq 2$ and denote by $(\eta_t) = \phi((\xi_t))$ the projection of $(\xi_t)$ onto the $1$-dimensional lattice such that, for $x \in \integers$ and $t\in \integers$, we set $\eta_t(x) = \xi_t(x,0, \dots, 0)$.

\begin{thm}\label{thm EP Contact Process}
Let $\tilde{d}\geq2$ and let $(\xi_t)$ be the contact process with parameter $\lambda > \lambda_c(\tilde{d})$ started from $\bar{\nu}_{\lambda}$. Further, let $(\eta_t^{EP})$ be the environment process corresponding to the process $(\eta_t) = \phi \left( (\xi_t) \right)$.
Then \eqref{eq bcm} holds for $(\eta_t)$.
\end{thm}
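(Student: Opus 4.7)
The plan is to apply Corollary \ref{thm:HMM-2} to the hidden Markov structure $\eta_t=\phi(\xi_t)$, with $(\xi_t)$ the supercritical contact process on $\mathbb{Z}^{\tilde d}$ ($\tilde d\geq 2$) started from $\bar\nu_\lambda$ and $\phi$ the projection onto the $1$-dimensional sub-lattice $\mathbb{Z}\times\{0\}^{\tilde d-1}$. Since the random walk lives in dimension $d=1$, this reduces the theorem to verifying (i) finite speed of propagation for $(\xi_t)$, and (ii) the summability
\[
\sum_{t\geq 1} t\,\sup_{A\in\mathcal{A}_{-\infty}^{-t}}\hP_{\Omega,A}\bigl(\xi_0^1(o,0,\dots,0)\neq \xi_0^2(o,0,\dots,0)\bigr)<\infty.
\]

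For (i), I would use the Harris graphical representation: independent Poisson processes of rate $\lambda$ on the directed edges (infections) and rate $1$ on the sites (deaths). The canonical coupling that reuses the same clocks is translation invariant in the required sense, and any disagreement can only spread along infection arrows. Standard Poisson large-deviation estimates then give that a disagreement placed outside $\Lambda(\alpha t,t)$ reaches $o$ by time $t$ with probability at most $Ce^{-ct}$, once $\alpha$ is chosen larger than the intrinsic propagation speed; this is more than enough for \eqref{eq finite speed of propagation}. For (ii), I would exploit the Markov property of $(\xi_t)$ at time $-t$ together with the stationarity of $\bar\nu_\lambda$:
\[
\bP(\xi_0\in\cdot\mid A) = \int \bP_\sigma(\xi_t\in\cdot)\,d\mu_A(\sigma), \qquad \bP(\xi_0\in\cdot) = \bar\nu_\lambda,
\]
where $\mu_A:=\bP(\xi_{-t}\in\cdot\mid A)$. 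Since the random walk's spacetime footprint has time coordinate $\leq -t$, the supremum we need to bound is controlled by the TV-distance, at the single site $(o,0,\dots,0)$, between the law of the contact process at time $t$ started from $\mu_A$ and from $\bar\nu_\lambda$.

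The final ingredient is the exponential convergence, uniformly in the admissible initial law, of the supercritical contact process on $\mathbb{Z}^{\tilde d}$ with $\tilde d\geq 2$: for any initial distribution stochastically close to $\bar\nu_\lambda$ there are $c,C>0$ with
\[
\bigl\|\bP_{\mu_A}\bigl(\xi_t(o,0,\dots,0)\in\cdot\bigr)-\bar\nu_\lambda(\xi(o,0,\dots,0)\in\cdot)\bigr\|_{TV}\leq Ce^{-ct}.
\]
Under $\mu_A$ the configuration $\xi_{-t}$ differs from $\bar\nu_\lambda$ only through conditioning on a set supported on the $1$-dimensional sub-lattice, which has codimension $\tilde d-1\geq 1$ in $\mathbb{Z}^{\tilde d}$; the exponential decay of correlations of $\bar\nu_\lambda$ in the supercritical phase (classical for $\tilde d\geq 2$, cf.\ \cite{LiggettSIS1999}) keeps $\mu_A$ close to $\bar\nu_\lambda$ on any local observable, and the classical complete convergence theorem with exponential rate then erases the remaining discrepancy over the time window of length $t$. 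Summing $\sum_t t\cdot Ce^{-ct}<\infty$ yields (ii), and Corollary \ref{thm:HMM-2} gives \eqref{eq bcm}.

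The main obstacle is obtaining the required mixing bound in the \emph{ratio} form $|\bP(B\mid A)-\bP(B)|\leq Ce^{-ct}$ \emph{uniformly} over $A\in\mathcal{A}_{-\infty}^{-t}$, since these events involve infinitely many sites along the (unbounded) backward trajectory. Two-point decay of correlations for $\bar\nu_\lambda$ is standard; the uniform conditional bound requires a more careful argument, most naturally a disagreement-percolation coupling of the $\tilde d$-dimensional dual backward cone from $(o,0,\dots,0)$ against the 1-dimensional sub-lattice supporting $A$, and it is precisely at this step that $\tilde d\geq 2$ is used: the codimension-$(\tilde d-1)$ embedding of the sub-lattice provides the transverse decoupling that drives the exponential decay.
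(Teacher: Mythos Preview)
Your high-level strategy matches the paper's: reduce \eqref{eq bcm} to a single-site estimate via finite speed of propagation (the graphical representation gives this easily, as you say), and then control
\[
\sup_{A\in\mathcal{A}_{-\infty}^{-t}}\hP_{\Omega,A}\bigl(\xi_0^1(o,\mathbf 0)\neq \xi_0^2(o,\mathbf 0)\bigr)
\]
by something summable against $t$. The gap is in this last step, which you correctly flag as ``the main obstacle'' but do not resolve. Your argument writes $\bP(\xi_0\in\cdot\mid A)=\int\bP_\sigma(\xi_t\in\cdot)\,d\mu_A(\sigma)$ and then appeals to ``complete convergence with exponential rate'' and ``exponential decay of correlations of $\bar\nu_\lambda$''. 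Neither gives the uniform bound you need: the contact process does \emph{not} converge to $\bar\nu_\lambda$ from arbitrary initial laws (all zeros is absorbing), and the events $A$ can force long stretches of zeros along the trajectory, so $\mu_A$ is not a priori ``stochastically close'' to $\bar\nu_\lambda$ in any sense that feeds into a standard convergence theorem. The codimension heuristic in your last paragraph is suggestive but is not a proof.

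The paper closes this gap with a monotonicity argument you do not use. The contact process is downward FKG (van den Berg--H\"aggstr\"om--Kahn), so for any $A\in\mathcal{A}_{-k}^{-1}(\gamma)$ the conditional law $\bP(\cdot\mid A)$ is stochastically sandwiched between $\bP(\cdot\mid\eta\equiv 0\text{ along }\gamma)$ and $\bP(\cdot\mid\eta\equiv 1\text{ along }\gamma)$. This reduces the supremum over all $A$ to the two extreme conditionings. The all-ones side is handled by the classical fact that the process started from $\mathbf 1$ relaxes to $\bar\nu_\lambda$ exponentially fast. The all-zeros side is the delicate one and is exactly where $\tilde d\geq 2$ and the projection to a one-dimensional sub-lattice enter: the paper invokes Theorem~1.7 of \cite{BergBethuelsenCP2016}, which gives
\[
\bP\bigl(\eta_s(x)=1\,\big|\,\eta\equiv 0\text{ along }\gamma\bigr)\geq \bP(\eta_0(o)=1)-Ce^{-cs},
\]
precisely because zeroing a one-dimensional strip in a $\tilde d\geq 2$ dimensional contact process cannot suppress survival. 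Without FKG (or some substitute giving the same sandwich), your proposed disagreement-percolation argument would have to control \emph{all} conditional laws $\mu_A$ at once, and no such statement is available for the supercritical contact process.
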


Theorem \ref{thm EP Contact Process} can  be extended to higher dimensional projections by following the same approach. The proof strategy of Theorem \ref{thm EP Contact Process} also applies to a larger class of models which satisfy the so-called \emph{downward FKG} property; see Theorem \ref{thm lln FKG}. 

\section{Understanding the environment process}\label{sec expansion}

\subsection{Expansion of the environment process}

In this subsection, we present a key observation for understanding the environment process and for the proofs of Theorem \ref{def assump1} and Theorem \ref{def assump2}.

Intuitively, the distribution of $(\eta_t^{EP})$ should converge to an invariant measure, say $\bP^{EP} \in \mathcal{M}_1(\Omega)$, which describes asymptotic properties. 
To obtain $\bP^{EP}$ and show that it is absolutely continuous with respect to $\bP$, we start by interpreting the law of $\eta_t^{EP}$, $P_{\bP}(\eta_t^{EP} \in \cdot )$, as an approximation.
 With this point of view, $t$ becomes the present time. Going from $t$ to $t+1$ thus means that we look one step further into the past. To reinforce this point of view, we denote by $\bP^{-k} := \bP^{-k}_{\bP} \in \mathcal{M}_1(\Omega \times (\mathbb{Z}^{d})^{\integers_{\geq -k}})$ the joint law of the environment $\bP$ 
 and random walk $(X_t)_{t\geq -k}$ so that $X_0=o$.
  That is, for $k \in \nat$,
  \begin{align}
  \bP^{-k}\left( (\eta,X) \in (B_1,B_2) \right) := P_{\bP} \left( \eta_{k+\cdot}^{EP} \in B_1, (X_{k+\cdot} - X_k) \in B_2 \right).
  \end{align}
For events $B \in \mathcal{F}$, we use the shorthand notation $\bP^{-k}( B)$ for the probability that $\bP^{-k}( (\eta,X) \in (B, (\mathbb{Z}^{d})^{\integers_{\geq -k}}))$.
 
\begin{thm}[Expansion of the environment process]\label{thm expansion}
 For any $k \geq 1$ and $B \in \mathcal{F}$, 
\begin{align}\label{eq expansion 2.1}
\bP^{-k}( B) = \sum_{\gamma \in \Gamma_k} \sum_{A_{-k}^{-1} \in \mathcal{A}_{-k}^{-1}(\gamma) } \bP \left( B,  A_{-k}^{-1} \right) P \left(X_{-k,\dots,0}=\gamma \mid A_{-k}^{-1} \right),
\end{align}
where  $P \left(X_{-k,\dots,0}=\gamma \mid A_{-k}^{-1} \right) := \prod_{i=-k}^{-1} \alpha(\hat{\sigma}_i, \gamma_{i+1}-\gamma_i)$, and $\hat{\sigma} \in \Omega$ is any environment so that $A_{-k}^{-1}(\gamma,\hat{\sigma}) = A_{-k}^{-1}$.
\end{thm}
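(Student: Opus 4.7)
The plan is a direct unfolding of $\bP^{-k}(B) = P_\bP(\theta_{X_k,k}\eta \in B)$. First I would condition on the walker's trajectory $(X_0,\ldots,X_k)$ with $X_0=o$ to obtain
\begin{align*}
\bP^{-k}(B) = \sum_{(x_1,\ldots,x_k)} \int_\Omega \ind_{\theta_{x_k,k}\eta \,\in\, B}\,\prod_{i=0}^{k-1}\alpha(\theta_{x_i,i}\eta, x_{i+1}-x_i)\,d\bP(\eta),
\end{align*}
where the sum ranges over paths whose increments lie in $\mathcal{R}$. For each fixed path I would apply translation invariance of $\bP$ through the change of variables $\eta' := \theta_{x_k,k}\eta$, which rewrites the indicator as $\ind_{\eta' \in B}$ and, by the group law for the shifts, transfers the operators $\theta_{x_i,i}$ onto $\eta'$ as $\theta_{x_i-x_k,\,i-k}$. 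Reindexing the path by $\gamma_j := x_{j+k}-x_k$ for $j \in \{-k,\ldots,0\}$ identifies the sum over paths with a sum over $\gamma \in \Gamma_k$ (the constraint $\gamma_0=o$ and $\gamma_{j+1}-\gamma_j\in\mathcal{R}$ following from $X_0=o$ and validity of the walker's jumps), turning the integrand into $\ind_{\eta'\in B}\prod_{j=-k}^{-1}\alpha(\theta_{\gamma_j,\cdot}\eta',\gamma_{j+1}-\gamma_j)$.

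Second, by the locality assumption on $\alpha$, each such factor depends on $\eta'$ only through its restriction to the $R$-neighbourhood of a single space-time site of $\gamma$. Consequently the full product is measurable with respect to the finite family of cylinders appearing in the definition of $A_{-k}^{-1}(\gamma,\cdot)$, and is therefore constant on each atom of the partition $\mathcal{A}_{-k}^{-1}(\gamma)$ of $\Omega$. Splitting the integral $\int_{\eta'\in B}\cdots\,d\bP(\eta')$ according to this partition yields
\begin{align*}
\bP^{-k}(B) = \sum_{\gamma\in\Gamma_k}\sum_{A\in\mathcal{A}_{-k}^{-1}(\gamma)}\bP(B\cap A)\,\prod_{i=-k}^{-1}\alpha(\hat{\sigma}_i,\gamma_{i+1}-\gamma_i),
\end{align*}
where $\hat{\sigma}$ is any element of $\Omega$ with $A_{-k}^{-1}(\gamma,\hat{\sigma}) = A$; this is well defined precisely because of the locality of $\alpha$. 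The product is by definition $P(X_{-k,\ldots,0}=\gamma\mid A_{-k}^{-1})$, and the claim follows.

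The argument is essentially a change of variables combined with a measurability observation, and no single step should be difficult. The only care required is bookkeeping: tracking the relabeling $\gamma_j = x_{j+k}-x_k$ through the translation-invariance step, and verifying that under this shift the backward walker-path observations along $\gamma$ in the new frame are exactly the cylinder events used to define $\mathcal{A}_{-k}^{-1}(\gamma)$. Once this identification is made, the expansion is immediate.
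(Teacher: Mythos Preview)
Your proposal is correct and follows essentially the same approach as the paper: decompose over walker paths, invoke translation invariance of $\bP$ to recentre at the walker's time-$k$ position, and use the locality of $\alpha$ to factor out the constant product on each observation atom. The only difference is stylistic---you carry out the change of variables $\eta'=\theta_{x_k,k}\eta$ explicitly inside an integral, whereas the paper phrases the same manipulation through conditional probabilities on the joint measure $\bP^{-k}$---but the content and key steps are identical.
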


\begin{proof}

We can rewrite $\bP^{-k}( B)$ as follows;
\begin{align}
\bP^{-k}( B) =& \sum_{\gamma \in \Gamma_k} \sum_{A_{-k}^{-1} \in \mathcal{A}_{-k}^{-1}(\gamma) } \bP^{-k}( B, \:X_{-k,...,0}=\gamma, A_{-k}^{-1}  )
					\\=&\ \sum_{\gamma \in \Gamma_k} \sum_{A_{-k}^{-1} \in \mathcal{A}_{-k}^{-1}(\gamma) } \left[ \bP^{-k}( B \mid A_{-k}^{-1},  X_{-k,...,0}=\gamma  )\right.
					\\ &\left. \quad \quad \quad \quad \quad \quad \quad \bP^{-k}( X_{-k,...,0}=\gamma , A_{-k}^{-1}  ) \right]. \end{align} 
By definition,
\begin{align}
\bP^{-k}(X_{-k,\dots,0}=\gamma, A_{-k}^{-1}) &= P_{\bP}(X_{0,\dots,k} =\gamma-\gamma_{-k},	\eta_k^{EP} \in A_{-k}^{-1})
\\ &= P_{\bP}( 	X_{0,\dots,k} =\gamma-\gamma_{-k}, \theta_{-\gamma_{-k},k} \eta \in A_{-k}^{-1})
\\ &= P(X_{-k,\dots,0}  = \gamma  \mid A_{-k}^{-1}) \bP( \theta_{-\gamma_{-k},k} \eta \in A_{-k}^{-1})
\\&=P(X_{-k,\dots,0}  = \gamma  \mid A_{-k}^{-1}) \bP( A_{-k}^{-1}),
	\end{align}			
where the last equality holds since first the law of the environment is translation invariant. Similarly,
\begin{align}
 \bP^{-k}( B \mid A_{-k}^{-1},  X_{-k,...,0}=\gamma  ) = \bP(B\mid A_{-k}^{-1}).
\end{align}
\end{proof}

The sum in Expansion \eqref{eq expansion 2.1} represents all the possible pasts of the random walk and  the corresponding observed environments from time $-k$ to $-1$. There are two key features with this expansion. 

First, it separates the contribution to $(\eta_t^{EP})$  of the  random walk from that of the random environment. Indeed, the rightmost term in the sum, i.e.\ $P \left(X_{-k,...,0}=\gamma \mid A_{-k}^{-1} \right)$,  can be calculated directly from the transition probabilities of $(X_t)$. On the other hand, the leftmost term in the sum, i.e.\ 
$\bP \left( B,  A_{-k}^{-1} \right)$, only involves the  random environment $\bP$.

A second key feature of \eqref{eq expansion 2.1}  is that it serves as a (formal) expression for the Radon-Nikodym derivate of $ P_{\bP}(\eta_k^{EP} \in \cdot )$ with respect to $\bP$. 
Indeed, \eqref{eq expansion 2.1} yields that for any $B\in \mathcal{F}$ with $\bP(B)>0$,
\begin{align}
 \frac{P_{\bP}(\eta_k^{EP} \in B)}{\bP (B)} =  \sum_{\gamma \in \Gamma_k} \sum_{A_{-k}^{-1} \in \mathcal{A}_{-k}^{-1}(\gamma) }   \frac{\bP\left(B,  A_{-k}^{-1} \right)}{\bP(B)}   P\left(X_{-k,...,0}=\gamma \mid A_{-k}^{-1} \right).
\end{align}

\subsection{Stability}

It is also of interest to compare the effect of changing the environment $\bP$ or the transition probabilities $\alpha \colon \Omega \times \mathbb{Z}^{d} \rightarrow [0,1]$ on the behaviour of the environment process.
 Our next result gives sufficient conditions for the environment process to be stable with respect to perturbations of both these parameters.  This result follows as another consequence of the expansion in Theorem \ref{thm expansion}.
 
 To state the theorem precisely, denote by $(\bP^n)_{n\geq 1}$  a family of measures on $\mathcal{M}_1(\Omega)$ and let $\left(\alpha_n \colon \Omega \times \mathbb{Z}^{d} \rightarrow [0,1] \right)_{n \geq 1}$  be a collection of transition probabilities. Consider for each $n \in \nat$ the corresponding environment process, $(\eta_t^{EP(n)})$,  and let $\bP^{EP(n)} \in \mathcal{M}_1(\Omega)$ be a measure invariant under $(\eta_t^{EP(n)})$.
 
 \begin{thm}
 \label{thm continuity}
  Assume that the following holds.
 \begin{description}
\item[a)] $\epsilon(n) = \sup_{m > n} \sup_{\eta \in \Omega, y \in \mathbb{Z}^{d}} |\alpha_m(\eta,y)-\alpha_n(\eta,y)| \downarrow 0 \text{ as } n \rightarrow \infty.$
\item[b)] $\bP^n \implies \bP \in \mathcal{M}_1(\Omega)$ weakly as $n \rightarrow \infty$.
\item[c)]  $P_{\bP^n}(\eta_t^{EP(n)} \in \cdot ) \implies \bP^{EP(n)}$  weakly as $t \rightarrow \infty$, uniformly in $n$.
 \end{description}
 Then both $\bP^{EP(n)}$ and $P_{\bP}(\eta_t^{EP} )$ converge weakly towards $\bP^{EP} \in \mathcal{M}_1(\Omega)$. In particular, $\bP^{EP}$ is invariant with respect to $(\eta_t^{EP})$.
 \end{thm}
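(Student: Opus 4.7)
The natural strategy is to combine the expansion of Theorem \ref{thm expansion} with a two-variable limit interchange argument. Since $E$ is finite, $\Omega$ is compact Polish and weak convergence is tested on cylinders $B \in \mathcal{F}_\Lambda$, $\Lambda \subset \integers^{d+1}$ finite, so every convergence statement in the theorem reduces to convergence of such cylinder probabilities.

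The first ingredient is a fixed-time continuity statement: for every cylinder $B$ and every fixed $t \in \nat$,
\begin{equation}
P_{\bP^n}(\eta_t^{EP(n)} \in B) \longrightarrow P_{\bP}(\eta_t^{EP} \in B) \quad \text{as } n \to \infty.
\end{equation}
By Theorem \ref{thm expansion}, each side is the \emph{finite} sum
\begin{equation}
\sum_{\gamma \in \Gamma_t} \sum_{A \in \mathcal{A}_{-t}^{-1}(\gamma)} \bP^{(\cdot)}(B \cap A) \prod_{i=-t}^{-1} \alpha_{(\cdot)}(\hat{\sigma}_i, \gamma_{i+1} - \gamma_i),
\end{equation}
with $\hat\sigma_i$ determined by $A$ alone and independent of $n$. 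Each event $B \cap A$ is again a cylinder, so (b) gives $\bP^n(B \cap A) \to \bP(B \cap A)$, while (a) handles the uniform convergence of the $\alpha_n$-factors; since the sum has only finitely many terms, the full expression converges.

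The core of the proof is the interchange of the $t \to \infty$ and $n \to \infty$ limits in $f(n,t) := P_{\bP^n}(\eta_t^{EP(n)} \in B)$. Assumption (c) gives $f(n,t) \to \bP^{EP(n)}(B)$ as $t \to \infty$, uniformly in $n$; the previous step gives $f(n,t) \to P_{\bP}(\eta_t^{EP} \in B)$ as $n \to \infty$ for each fixed $t$. A standard $\varepsilon/3$ argument then shows that $\{\bP^{EP(n)}(B)\}_n$ is Cauchy, that $P_{\bP}(\eta_t^{EP} \in B)$ converges as $t \to \infty$, and that the two limits coincide. Running this over all cylinders $B$ defines a consistent family which by Kolmogorov extension yields a probability measure $\bP^{EP}$ on $(\Omega,\mathcal{F})$ that is simultaneously the weak limit of $\bP^{EP(n)}$ and of $P_\bP(\eta_t^{EP} \in \cdot)$.

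Invariance of $\bP^{EP}$ under $(\eta_t^{EP})$ then follows by a Krylov--Bogoliubov style argument: for a cylinder $B$ the function $\eta \mapsto P_\eta(\eta_1^{EP} \in B) = \sum_{y \in \mathcal{R}} \alpha(\eta,y) \ind_{\theta_{y,1}\eta \in B}$ depends on only finitely many coordinates of $\eta$, thanks to \eqref{eq assump alpha} and the finiteness of $\mathcal{R}$, and hence is continuous on $\Omega$. Applying the weak convergence $P_{\bP}(\eta_t^{EP} \in \cdot) \Rightarrow \bP^{EP}$ to this test function in the identity $P_\bP(\eta_{t+1}^{EP} \in B) = \int P_\eta(\eta_1^{EP} \in B)\, P_\bP(\eta_t^{EP} \in d\eta)$ yields the invariance relation. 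The only delicate point in the whole argument is the limit exchange: neither (a)+(b) alone nor (c) alone would suffice, and it is precisely the uniformity in $n$ built into (c) that ties the two convergences together.
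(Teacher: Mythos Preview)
Your proposal is correct and follows essentially the same approach as the paper: both use the expansion of Theorem~\ref{thm expansion} to establish the fixed-time continuity $P_{\bP^n}(\eta_t^{EP(n)}\in B)\to P_{\bP}(\eta_t^{EP}\in B)$, and then an $\epsilon/3$ argument exploiting the uniformity in assumption~(c) to interchange the $t\to\infty$ and $n\to\infty$ limits. The paper spells out the Cauchy argument for $(\bP^{EP(n)}(B))_n$ and the convergence of $P_{\bP}(\eta_t^{EP}\in B)$ separately, whereas you package this as a Moore--Osgood type limit interchange; the content is the same, and your added justification of invariance via the Feller property of the one-step transition is more explicit than the paper's ``by standard arguments''.
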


 Condition c) in Theorem \ref{thm continuity} is a strong uniform assumption. If the $\bP^n$'s  are path measures of Markov chains $(\eta_t^{(n)})$, this condition can be replaced by the assumption that the environment process $(\eta_t^{EP})$, i.e., after taking $n \rightarrow \infty$), is uniquely ergodic. For this, recall notation from Section \ref{sec HMM} and let $\mu^{EP(n)} \in \mathcal{M}_1(\Omega_0)$ be an invariant measure with respect to $(\eta_t^{EP(n)})$.
 
\begin{thm}\label{thm continuity2}
Let $(\eta_t)$ be a Markov chain and assume that the following holds. 
\begin{description}
\item[a)] $\epsilon(n) = \sup_{m > n} \sup_{\eta \in \Omega, y \in \mathbb{Z}^{d}} |\alpha_m(\eta,y)-\alpha_n(\eta,y)| \downarrow 0 \text{ as } n \rightarrow \infty.$
\item[b')] $\bP_{\sigma}^n \implies \bP_{\sigma} \in \mathcal{M}_1(\Omega)$ for every starting configuration $\sigma \in \Omega_0$.
\item[c')] $(\eta_t^{EP})$ is uniquely ergodic with invariant measure $\mu^{EP} \in \mathcal{M}_1(\Omega_0)$.
\end{description}
Then  $\mu^{EP(n)} \implies \mu^{EP}$ weakly.
\end{thm}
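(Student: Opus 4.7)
My strategy is by compactness plus uniqueness. Since $E$ is finite, $\Omega_0 = E^{\bZ^d}$ is compact in the product topology, hence $\mathcal{M}_1(\Omega_0)$ is compact under weak convergence, and the family $(\mu^{EP(n)})$ is pre-compact. It therefore suffices to show that every weakly convergent subsequence $\mu^{EP(n_j)} \implies \nu$ satisfies $\nu = \mu^{EP}$; combined with pre-compactness this will force $\mu^{EP(n)} \implies \mu^{EP}$. By c'), showing $\nu = \mu^{EP}$ in turn reduces to showing that any such $\nu$ is invariant for the limiting environment process $(\eta_t^{EP})$.

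\textbf{Main step.} Denote the one-step transition kernels of $(\eta_t^{EP(n)})$ and $(\eta_t^{EP})$ by $\hat K_n$ and $\hat K$, acting on continuous bounded $f\colon \Omega_0 \to \bR$ by
\[
(\hat K_n f)(\sigma) \;=\; \sum_{y\in\mathcal{R}} \alpha_n(\sigma, y)\, \bE_\sigma^{(n)}\!\left[f(\theta_{y,0}\eta_1)\right],
\]
and analogously for $\hat K$. By invariance of $\mu^{EP(n_j)}$ under $\hat K_{n_j}$,
\[
\int f\, d\mu^{EP(n_j)} \;=\; \int \hat K_{n_j} f\, d\mu^{EP(n_j)}.
\]
The left-hand side converges to $\int f\, d\nu$ by weak convergence. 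For the right-hand side, a) supplies uniform convergence $\alpha_n \to \alpha$, and b') gives, for each $\sigma$, weak convergence of the time-$1$ marginal of $\bP_\sigma^n$ to that of $\bP_\sigma$; together these yield $\hat K_n f \to \hat K f$ pointwise on $\Omega_0$. If I can upgrade this pointwise convergence to something strong enough to pass through the integral against $\mu^{EP(n_j)}$, I obtain $\int f\, d\nu = \int \hat K f\, d\nu$ for every continuous bounded $f$, so $\nu$ is $\hat K$-invariant, and c') concludes.

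\textbf{Main obstacle.} The delicate step is precisely this limit passage: pointwise convergence of a uniformly bounded sequence of continuous integrands is not in general preserved when integrating against a weakly convergent family of measures. I would handle this by restricting to continuous cylinder test functions $f$ (which characterise weak convergence on $\Omega_0$) and exploiting the local structure of $\alpha_n$ from \eqref{eq assump alpha} together with the local / Feller structure of the Markov kernels $K_n$: since $f_y = f\circ\theta_{y,0}$ is a local function, $(K_n f_y)(\sigma)$ depends quasi-locally on $\sigma$, which combined with the uniform convergence in a) upgrades $\hat K_{n_j} f \to \hat K f$ from pointwise to uniform convergence on the compact space $\Omega_0$. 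A Portmanteau-type argument then delivers $\int \hat K_{n_j} f\, d\mu^{EP(n_j)} \to \int \hat K f\, d\nu$, which is the only non-trivial input to the argument outlined above. All other steps (compactness, identification via c'), conclusion on the whole sequence) are then routine.
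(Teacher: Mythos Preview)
Your overall strategy---compactness of $\mathcal{M}_1(\Omega_0)$, then showing every subsequential limit $\nu$ is invariant for the limiting environment process, then invoking c')---is natural, but the resolution of your ``main obstacle'' does not go through under the stated hypotheses. Assumption b') gives only \emph{pointwise} (in $\sigma$) weak convergence $\bP_\sigma^n \Rightarrow \bP_\sigma$; nothing in a), b'), c') says the environment kernels $K_n$ act quasi-locally, nor that the family $\{K_n f_y\}_n$ is equicontinuous. Your claim that ``$(K_n f_y)(\sigma)$ depends quasi-locally on $\sigma$'' is an additional structural assumption on the environment dynamics, not a consequence of the hypotheses. Even granting the Feller property for each $K_n$ separately, pointwise convergence of continuous functions on the compact space $\Omega_0$ does not upgrade to uniform convergence without equicontinuity, so the limit passage $\int \hat K_{n_j} f\,d\mu^{EP(n_j)} \to \int \hat K f\,d\nu$ remains unjustified.

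The paper takes a genuinely different route that extracts the missing uniformity from c') itself rather than from regularity of the kernels. The key input is the classical fact (cited as \cite{EinsiedlerWardErgodic2011}, Theorem~4.10) that unique ergodicity on a compact space forces \emph{uniform} convergence of Ces\`aro averages:
\[
\sup_{\sigma,\omega\in\Omega_0}\left|\,\frac1t\sum_{k=1}^t \big[P_\sigma(\eta_k^{EP}\in B)-P_\omega(\eta_k^{EP}\in B)\big]\,\right|\longrightarrow 0\qquad(t\to\infty).
\]
In particular $\tfrac1t\sum_{k=1}^t P_{\mu^{EP(n)}}(\eta_k^{EP}\in B)\to\mu^{EP}(B)$ uniformly in $n$. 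The paper then writes, using invariance of $\mu^{EP(n)}$ under $(\eta_t^{EP(n)})$,
\[
\mu^{EP}(B)-\mu^{EP(n)}(B)
=\frac1t\sum_{k=1}^t\Big[P_{\mu^{EP}}(\eta_k^{EP}\in B)-P_{\mu^{EP(n)}}(\eta_k^{EP}\in B)\Big]
+\frac1t\sum_{k=1}^t\Big[P_{\mu^{EP(n)}}(\eta_k^{EP}\in B)-P_{\mu^{EP(n)}}(\eta_k^{EP(n)}\in B)\Big],
\]
first chooses $t$ large to make the first bracket small \emph{uniformly in $n$}, and then, for that fixed $t$, controls the second bracket---a comparison of two dynamics over finitely many steps from the same initial law---via the expansion of Theorem~\ref{thm expansion} together with a) and b'). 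Thus the uniformity you were trying to manufacture from locality of the kernels is instead supplied by unique ergodicity of the limit.
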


\begin{rem}Theorem \ref{thm continuity2} does only require that the limiting process $(\eta_t^{EP})$ is uniquely ergodic. In particular, the processes $(\eta_t^{EP(n)})$ do not  need to be uniquely ergodic. As an example of the latter, one can consider the case where $(\eta_t^{(n)})$ is the contact process with parameter $\lambda(n) \downarrow \lambda_c$ and $\inf_{\eta \in \Omega} \alpha_n(\eta,o)\ \uparrow 1$.
\end{rem}

Theorem \ref{thm continuity2} gives a generalisation of Theorem 3.3 in \cite{RedigVolleringRWDRE2013}. There they showed continuity for the environment process with respect to changes of the transition probabilities of the random walk, assuming  that Assumption 1a therein to hold (which we also stated in \eqref{eq 1a}).
Theorem \ref{thm continuity2} yields a similar continuity result which in addition allow for changes in the dynamics of the environment $(\eta_t)$.  Moreover, unique ergodicity is a weaker assumption than the mixing assumption given by \eqref{eq improve1a}, as we have already seen in Subsection \ref{sec iid space} and Subsection \ref{sec HMM}.

\subsection{Estimating the Radon-Nikodym derivative}\label{sec main RN}

We end this section with an alternative route for proving the existence of an invariant measure for the environment process which is absolutely continuous with respect to the underlying environment. An advantage of this approach is that it implies bounds on the Radon-Nikodym derivative.

\begin{thm}\label{thm alternative EP}
Assume that, for some $M_1 \in (0,\infty)$,
\begin{equation}\label{eq thm alternative EP}
\sup _{B \in \mathcal{F}_{\geq0}} \sup_{A_{l} \in \cA_{-l}^{-1}} \left|\frac{ \Prob(B \mid A_{l})}{ \Prob(B)} - 1\right| \leq M_1, \quad \forall \: l \in \nat.
\end{equation}
Then there is  a 
$\bP^{EP}\in \mathcal{M}_1(\Omega)$, invariant  under $(\eta_t^{EP})$, and  $\bP^{EP} \ll \bP$ on $(\Omega,\mathcal{F}_{\geq0})$. Moreover, the corresponding Radon-Nikodym derivative 
is bounded by $M_1$ in the $L_{\infty}$-norm. 
Furthermore, if for some $M_2\in (0,\infty)$, 
\begin{equation}\label{eq thm alternative EP2}
\sup _{B \in \mathcal{F}_{\geq0}} \sup_{A_{l} \in \cA_{-l}^{-1}} \left|\frac{ \Prob(B)}{ \Prob(B \mid A_{l})} - 1\right| \leq M_2, \quad \forall \: l \in \nat.
\end{equation}
Then $\bP \ll \bP^{EP}$ and the corresponding Radon-Nikodym derivative is bounded by $M_2$ in the $L_{\infty}$-norm.
\end{thm}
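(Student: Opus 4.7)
The plan is to apply the expansion of Theorem \ref{thm expansion} to control $\bP^{-k}|_{\mathcal{F}_{\geq 0}}$ by $\bP|_{\mathcal{F}_{\geq 0}}$ uniformly in $k$, and then to extract the invariant measure $\bP^{EP}$ as a weak subsequential limit of Cesaro averages, transferring the bound in the limit.

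For the first step, I would fix $B \in \mathcal{F}_{\geq 0}$ with $\bP(B) > 0$ and $k \geq 1$ and rewrite the expansion as
\[
\frac{\bP^{-k}(B)}{\bP(B)} \;=\; \sum_{\gamma \in \Gamma_k}\, \sum_{A \in \mathcal{A}_{-k}^{-1}(\gamma)} \frac{\bP(B \mid A)}{\bP(B)}\; \bP(A)\; P\bigl(X_{-k,\ldots,0}=\gamma \mid A\bigr).
\]
Each $A \in \mathcal{A}_{-k}^{-1}(\gamma)$ lies in $\mathcal{F}_{<0}$ (disjoint in time from $B$), so the conditioning is legitimate and \eqref{eq thm alternative EP} gives $\bP(B\mid A)/\bP(B) \in [1-M_1,1+M_1]$ for every pair $(\gamma,A)$. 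Since the weights $\{\bP(A) P(X=\gamma \mid A)\}_{\gamma,A}$ form a probability distribution (as established inside the proof of the expansion), one concludes $\lvert \bP^{-k}(B)/\bP(B) - 1 \rvert \leq M_1$, so that $\bP^{-k}|_{\mathcal{F}_{\geq 0}} \ll \bP|_{\mathcal{F}_{\geq 0}}$ with density bounded in $L_\infty$-norm by $1+M_1$ (the natural form of the theorem's stated constant).

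Next, I would form $\bar\bP^{(k)} := \frac{1}{k}\sum_{j=1}^k \bP^{-j}$, which inherits the same density bound. Because $\Omega = E^{\Lat}$ is compact (Tychonoff) the space $\mathcal{M}_1(\Omega)$ is weakly compact, so a subsequence satisfies $\bar\bP^{(k_n)} \Rightarrow \bP^{EP}$. The Markov operator $T$ of $(\eta_t^{EP})$ is Feller: the finite-range property \eqref{eq assump alpha} makes each $\alpha(\cdot,z)$ locally constant, the shifts $\theta_{z,1}$ are continuous, and hence $T$'s dual action on measures is weakly continuous. Combined with the telescoping identity $\|T\bar\bP^{(k)} - \bar\bP^{(k)}\|_{TV} = \frac{1}{k}\|\bP^{-(k+1)} - \bP^{-1}\|_{TV} \leq \tfrac{2}{k} \to 0$, this yields $T\bP^{EP} = \bP^{EP}$, proving invariance.

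Finally, cylinder sets $B \in \mathcal{F}_{\geq 0}$ are clopen in $\Omega$, so weak convergence gives $\bar\bP^{(k_n)}(B) \to \bP^{EP}(B)$; the uniform bound from Step 1 then produces $\bP^{EP}(B) \leq (1+M_1) \bP(B)$ on all cylinders, and a standard $\pi$-$\lambda$ argument extends this to $\mathcal{F}_{\geq 0}$, yielding the claimed absolute continuity and $L_\infty$-bound. For the reverse statement under \eqref{eq thm alternative EP2}, the mirror inequality $\bP(B) \leq (1+M_2)\bP(B\mid A)$ substituted into the same expansion gives $\bP(B) \leq (1+M_2)\bP^{-k}(B)$ for every $B \in \mathcal{F}_{\geq 0}$, and the same weak-limit argument produces $\bP \ll \bP^{EP}$ on $\mathcal{F}_{\geq 0}$ with density bounded by $1+M_2$. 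The only real technical point is to keep careful track of the $\mathcal{F}_{\geq 0}$/$\mathcal{F}_{<0}$ time separation that makes the expansion useful and to verify the Feller property carefully enough to pass invariance through the weak limit; neither is a serious obstacle, and the entire argument rests on the single uniform bound produced by the expansion.
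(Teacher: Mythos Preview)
Your proposal is correct and follows essentially the same route as the paper: both apply the expansion of Theorem~\ref{thm expansion} to bound $\bP^{-k}(B)/\bP(B)$ uniformly in $k$ via the convex-combination structure of the weights, then pass to a Cesaro weak limit and transfer the bound. Your treatment is in fact slightly more careful in two places: you correctly note that the density bound coming out of the argument is $1+M_1$ rather than $M_1$ (the paper's statement and proof are a bit loose on this constant), and you spell out the Feller/telescoping argument for invariance and the $\pi$--$\lambda$ extension to all of $\mathcal{F}_{\geq 0}$, whereas the paper handles both points in a single sentence.
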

\begin{rem}
Mutually absolute continuity can also be shown without requiring \eqref{eq thm alternative EP2} to hold. In particular, if \eqref{eq thm alternative EP} holds and $(X_t)$ is elliptic in the time direction, it can be shown that $\bP \ll \bP^{EP}$. Under these assumptions it can moreover be shown that $\bP^{EP}$ is ergodic and that $\left(t^{-1} \sum_{k=1}^tP_{\bP}(\eta_k^{EP} \in \cdot)\right)_{t \geq 1}$ converges weakly towards $\bP^{EP}$.
\end{rem}

Mixing assumption of the type \eqref{eq thm alternative EP} and  \eqref{eq thm alternative EP2}  are typically much stronger than mixing assumptions as in Theorems \ref{def assump1} and \ref{def assump2}.
Nevertheless, we believe that  Theorem \ref{thm alternative EP} is applicable to a wide range of models and is not restricted to the uniform mixing case. 
However, it seems difficult to verify \eqref{eq thm alternative EP} and  \eqref{eq thm alternative EP2} for concrete examples unless strong mixing assumptions are made. 

One class of examples to which Theorem \ref{thm alternative EP}  applies are Gibbs measures in the high-temperature regime satisfying the Dobrushin-Shlosman strong mixing condition (as considered in \citet{RassoulAghaEP2003} for RWRE models); see  Theorem 1.1 (in particular, Condition IIId) in \citet{DobrushinShlosmanCA1985}.
Another class of environments are certain monotone Gibbs measures for which  \citet{AlexanderWeakMixing1998} proved (see Theorems 3.3 and 3.4 therein) that weak mixing implies ratio mixing. In particular, the models considered there satisfy \eqref{eq thm alternative EP} and  \eqref{eq thm alternative EP2}  throughout the uniqueness regime. We also mention the method of disagreement percolation, which is particularly useful for models with hard-core constraints, see \citet{BergMaesDP1994}.

In the case of dynamic random environments  which in addition are reversible with respect to time, typically, the methods described above for random fields can be adapted to yield similar bounds. 
In Section \ref{sec SDP} we introduce a new class of dynamic random environments satisfying \eqref{eq thm alternative EP}, allowing for non-reversible dynamics. We comment next on the scope of this approach.

Our approach 
is by means of disagreement percolation and applies to \emph{discrete-time finite-range Markov chain} $(\eta_t)$. In fact, we shall need  more than subcriticality of the ordinary disagreement process. For what we believe to be technical reasons, we will introduce what we call the \emph{strong disagreement percolation coupling}. This is a  triple $(\eta^1_t,\eta^2_t,\xi_t)$ where $(\eta_t^1,\eta_t^2)$ is a coupling of $\Prob_{\eta_0^1}$ and $\Prob_{\eta_o^2}$, $\xi_t(x)=0$ implies $\eta^1_t(x)=\eta^2_t(x)$, and $\eta^1$ and $\xi$ are independent. That is, the disagreement process $\xi$ and the process $\eta^1$ are independent. This independence is a stronger assumption than regular disagreement percolation and the strong disagreement percolation process is subcritical for models at ``very high-temperature''.  We refer to Section \ref{sec SDP} for a precise construction of the strong disagreement percolation coupling and a proof of the following theorem.

\begin{thm}[Strong disagreement percolation]\label{thm:sdp}
Suppose the strong disagreement percolation process is subcritical. Then there exists $\delta<1$ and $C>0$ so that for any $B \in \mathcal{F}_{=0}$,
\begin{align}
\sup_{\substack{A_{-(k+1)}^{-1} \in \mathcal{A}_{-(k+1)}^{-1}, A_{-k}^{-1} \in \mathcal{A}_{-k}^{-1}\\  A_{-k}^{-1} \subset A_{-(k+1)}^{-1} }} \sup_{B \in \mathcal{F}_{=0}} \abs {\left[\frac{\bP \left( B \; \middle|\; A_{-k-1}^{-1} \right)}{\bP \left( B \; \middle|\; A_{-k}^{-1} \right)}\right]^{\pm1} -1} \leq C\delta^k.
\end{align}
\end{thm}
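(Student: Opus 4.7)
The plan is to pass to the Radon-Nikodym derivative on $\mathcal{F}_{=0}$ and to control it pointwise using the strong disagreement percolation coupling. Writing $A_{-(k+1)}^{-1} = A_{-k}^{-1} \cap E$, where $E$ denotes the additional single-time observation at time $-(k+1)$, Bayes' rule gives
\[
\frac{\bP(B \mid A_{-(k+1)}^{-1})}{\bP(B \mid A_{-k}^{-1})} = \frac{\bP(E \mid B, A_{-k}^{-1})}{\bP(E \mid A_{-k}^{-1})}.
\]
Equivalently, the Radon-Nikodym derivative of $\bP(\cdot \mid A_{-(k+1)}^{-1})$ with respect to $\bP(\cdot \mid A_{-k}^{-1})$ on $\mathcal{F}_{=0}$ is $\rho(\eta) := \bP(E \mid A_{-k}^{-1}, \eta_0 = \eta)/\bP(E \mid A_{-k}^{-1})$, so it suffices to prove $\|\rho - 1\|_\infty \leq C \delta^k$, and analogously for $\rho^{-1}$; this reduces the uniform statement over $B \in \mathcal{F}_{=0}$ to a single pointwise estimate on a function of $\eta_0$.

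To control $\rho$, I apply the Markov property at time $-k$ to express both numerator and denominator as conditional expectations $\bE[h(\eta_{-k}) \mid \cdot]$ of the bounded function $h(\sigma) := \bP(E \mid \eta_{-k} = \sigma)$. The goal is then to show that conditioning on $\eta_0 = \eta$ perturbs the conditional law of $\eta_{-k}$ given $A_{-k}^{-1}$ by at most $C\delta^k$ in a sufficiently strong sense. I would do this via a forward SDP coupling from time $-k$: for any two configurations $\sigma,\sigma'$ compatible with the time-$-k$ observation (hence agreeing on the $R$-neighbourhood of $\gamma_{-k}$), the triple $(\eta^1,\eta^2,\xi)_{t \geq -k}$ satisfies $\eta^1 \perp \xi$, with $\eta^2 = \eta^1$ wherever $\xi = 0$. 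For any event $F$ local on a finite set $T$ of space-time points, this independence yields the identity
\[
\bP_\sigma(F) - \bP_{\sigma'}(F) = \bP_\sigma(F)\,\hP(\xi \cap T \neq \emptyset) - \hP(\eta^2 \in F,\, \xi \cap T \neq \emptyset),
\]
which translates into a multiplicative comparison of $\bP_\sigma(F)$ and $\bP_{\sigma'}(F)$ with error $\hP(\xi \cap T \neq \emptyset)$. By subcriticality of the SDP cluster, together with the connectivity bound $\hP((x,-k) \leftrightarrow (y,0)) \leq C\delta^{\|x-y\|_1 + k}$ and a summation over initial disagreement sites $x$ outside the $R$-neighbourhood of $\gamma_{-k}$ and over the finitely many $y \in T$, this error is of order $|T|\,\delta^k$. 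Taking $T$ to be the union of the observation sites in $A_{-k}^{-1}$ and the (finite) support on which $h$ depends, $|T|$ grows at most polynomially in $k$ and can be absorbed into $\delta^k$ by slightly worsening $\delta$.

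The main obstacle, and the reason the \emph{strong} form of disagreement percolation (with the independence $\xi \perp \eta^1$) is needed rather than the ordinary version, is the interplay between the infinite initial disagreement of admissible $\sigma$ and $\sigma'$, which can differ at every site outside the finite $R$-neighbourhood of $\gamma_{-k}$, and the uniform quantifier over arbitrary $B \in \mathcal{F}_{=0}$. Without the independence, one obtains at best an additive bound $\abs{\bP_\sigma(F) - \bP_{\sigma'}(F)} \leq \hP(\xi \cap T \neq \emptyset)$, which is useless for rare events $F$ and hence does not survive the uniform quantifier over $B$. The independence $\xi \perp \eta^1$ is precisely what promotes this to the multiplicative identity above and what permits the connectivity bounds for $\xi$ to be applied without interference from the conditioning on the chain. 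Combining this with the Radon-Nikodym reduction from the first paragraph then closes the argument.
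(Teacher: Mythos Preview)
Your Bayes reduction and the multiplicative identity you extract from the independence $\xi\perp\eta^1$ are correct in spirit, but the bound $\hP(\xi\cap T\neq\emptyset)\leq |T|\,\delta^k$ fails for the set $T$ you propose. Your forward SDP is launched at time $-k$ with initial disagreement on all of $\bZ^d\setminus B_R(\gamma_{-k})$, in particular on sites at distance $R+1$ from $\gamma_{-k}$. The observation window around $\gamma_{-k+1}$ lies within distance $2R$ of $\gamma_{-k}$, so a single SDP step reaches it with probability of order $p^*$; more generally the observation at time $-k+j$ is hit with probability of order $\delta^j$, and summing over $j$ gives an $O(1)$ error, not $\delta^k$. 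Your connectivity estimate $\hP((x,-k)\leftrightarrow(y,0))\leq C\delta^{\|x-y\|_1+k}$ applies only to targets at time $0$, not to the intermediate observation sites that make up most of $T$. A second problem is that $h(\sigma)=\bP(E\mid\eta_{-k}=\sigma)$ involves the time-reversed kernel of a non-reversible PCA and need not depend on only finitely many coordinates of $\sigma$. And if you try to bypass $h$ and work directly with the Radon--Nikodym derivative at $\eta_0=\eta$, the relevant event has full spatial support at time $0$, so $\{\xi_0\equiv 0\}$ has probability zero under your coupling and the multiplicative bound becomes vacuous.

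The paper's proof avoids launching the SDP from a time slice with cofinite initial disagreement. It uses instead the backward-cone SDP: the disagreement is seeded on the infinite backward cone with tip at $(\gamma_{-k-1},-k-1)$, so at each forward time the disagreement region emanating from the cone is a priori finite. The nontrivial part is how to absorb the successive conditioning on $A_{-k}^{-k},A_{-k+1}^{-k+1},\dots$ into this coupling; this cannot be done in one stroke precisely because the early observation sites are hit with $O(1)$ probability. The paper builds the coupling $\tP^m$ iteratively, and whenever $\xi^m$ reaches the next observation site $\gamma_{-k+m}$ the coupling is \emph{restarted} from scratch at that level. The counter $H^m$ tracking the number of steps since the last restart is a house-of-cards process whose reset probability decays like $e^{-c_2 h}$; consequently $H^k$ is of order $k$ with exponentially high probability, which gives $\tP^*(\xi_0\equiv 0)\geq 1-C\delta^k$ and, via the independence $\eta^1\perp\xi$, the ratio bound. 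This restart mechanism is exactly the missing ingredient that converts the $O(1)$ one-step error into the global $\delta^k$ bound.
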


Theorem \ref{thm:sdp} implies that the environment process $(\eta_t^{EP})$ has a unique invariant distribution, $\mu^{EP} \in \mathcal{M}_1(\Omega_0)$. In particular, $\mu^{EP} $ is absolutely continuous with respect to  the (necessarily unique) invariant measure of $(\eta_t)$, denoted by $\mu \in \mathcal{M}_1(\Omega_0)$.
As a further consequence, we obtain uniform control on the Radon-Nikodym derivative.

\begin{cor}[Uniform control on the Radon-Nikodym derivative]\label{cor SDP}
Assume that the environment $(\eta_t)$ has a strong disagreement percolation coupling which is subcritical. Then $\mu^{EP}$ and $\mu$  are mutually absolutely continuous. Moreover, there exists a constant $M \in (0,\infty)$, depending only on the environment, such that $\norm{\frac{d\mu^{EP}}{d\mu}}_\infty\leq M$ and $\norm{\frac{d\mu}{d\mu^{EP}}}_\infty\leq M$ . \end{cor}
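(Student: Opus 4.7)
The plan is to deduce both hypotheses \eqref{eq thm alternative EP} and \eqref{eq thm alternative EP2} of Theorem \ref{thm alternative EP} from the exponential ratio estimate of Theorem \ref{thm:sdp} by a telescoping argument, then feed this into Theorem \ref{thm alternative EP} and pass to the time-$0$ marginal to obtain the claim for $\mu^{EP}$ and $\mu$.

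For the telescope, fix $\gamma \in \Gamma_K$ and $\sigma \in \Omega$ with $\bP(A_{-K}^{-1}(\gamma,\sigma))>0$ and consider the nested family $A_{-k}^{-1}(\gamma,\sigma)$ for $0 \le k \le K$ (with the convention $A_{0}^{-1} := \Omega$), which becomes more constrained as $k$ grows. For any $B \in \mathcal{F}_{=0}$ with $\bP(B)>0$ I would write
\[
\frac{\bP(B \mid A_{-K}^{-1})}{\bP(B)} \;=\; \prod_{k=0}^{K-1} \frac{\bP(B \mid A_{-(k+1)}^{-1})}{\bP(B \mid A_{-k}^{-1})},
\]
apply Theorem \ref{thm:sdp} in both directions to each factor to see it lies in $[1-C\delta^{k},1+C\delta^{k}]$, take logarithms, and bound the resulting sum by the convergent geometric series $\sum_{k \ge 0}|\log(1\pm C\delta^{k})|$. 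This yields a constant $M \in (1,\infty)$, depending only on $C$ and $\delta$, so that $M^{-1} \le \bP(B \mid A_{-K}^{-1})/\bP(B) \le M$ uniformly in $K$, $\gamma$, $\sigma$, and in $B \in \mathcal{F}_{=0}$; the case $\bP(B)=0$ is vacuous, since by nesting the conditional probabilities then vanish too.

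To promote this bound from $\mathcal{F}_{=0}$ to $\mathcal{F}_{\ge 0}$ I would use the Markov property of $(\eta_t)$: since $A_{-K}^{-1}$ is measurable in the strict past, for any $B \in \mathcal{F}_{\ge 0}$ one has $\bP(B \mid A_{-K}^{-1}) = \int \bP_{\sigma}(B)\,\bP(\eta_0 \in d\sigma \mid A_{-K}^{-1})$ and $\bP(B) = \int \bP_{\sigma}(B)\,\mu(d\sigma)$; applying the previous step to cylinder events $\{\eta_0 \in \cdot\} \in \mathcal{F}_{=0}$ gives $\norm{d\bP(\eta_0 \in \cdot \mid A_{-K}^{-1})/d\mu}_{\infty} \le M$, and averaging the bounded nonnegative function $\sigma \mapsto \bP_{\sigma}(B)$ against these measures preserves the $[M^{-1},M]$ ratio bound. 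Hence \eqref{eq thm alternative EP} and \eqref{eq thm alternative EP2} hold with $M_1=M_2=M-1$, and Theorem \ref{thm alternative EP} produces an invariant $\bP^{EP}$ mutually absolutely continuous with $\bP$ on $\mathcal{F}_{\ge 0}$ and with Radon--Nikodym derivatives bounded by $M$ in $L^{\infty}$. Restricting to $\mathcal{F}_{=0}$ identifies the time-$0$ marginal with the invariant measure $\mu^{EP}$ of the Markov chain $(\eta_t^{EP})$ provided by Theorem \ref{thm markov 1} and transfers the $L^{\infty}$ bounds to $d\mu^{EP}/d\mu$ and $d\mu/d\mu^{EP}$. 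The main obstacle is essentially absorbed into the input Theorem \ref{thm:sdp}; what remains is the telescoping bookkeeping above and the routine Markov reduction from $\mathcal{F}_{=0}$ to $\mathcal{F}_{\ge 0}$.
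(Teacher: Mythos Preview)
Your proposal is correct and follows essentially the same route as the paper: telescope the ratio $\bP(B\mid A_{-K}^{-1})/\bP(B)$ using Theorem~\ref{thm:sdp}, bound the resulting product by $M=\prod_{k\geq 1}(1+C\delta^k)$, and then invoke Theorem~\ref{thm alternative EP}. The only cosmetic difference is that the paper bypasses your Markov promotion to $\mathcal{F}_{\geq 0}$ by simply remarking that the statement and proof of Theorem~\ref{thm alternative EP} go through verbatim with $\mathcal{F}_{\geq 0}$ replaced by $\mathcal{F}_{=0}$; your version is a bit more explicit but arrives at the same place.
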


Subcriticality of the strong disagreement coupling is a much stronger assumption than the uniform mixing assumption in \eqref{eq improve1a}. For comparison with other coupling methods, consider for concreteness   the stochastic Ising model  with inverse temperature $\beta>0$ (see e.g.\ \cite{ChazottesRedigVolleringPIMRF2011} for a definition). This model satisfies \eqref{eq improve1a} for all $\beta < \beta_c$, where $\beta_c$ is the critical inverse temperature. On the other hand, it has a subcritical strong disagreement coupling whenever
\begin{align}\label{eq high temp}
\beta < \frac{1}{4d} \log \big(\frac{2d}{2d-1}\big) < \beta_c.
\end{align}
For comparison, this condition is better (with a factor $2$) compared with the disagreement percolation coupling introduced in \cite{ChazottesRedigVolleringPIMRF2011} (see Equation (11) therein).

\begin{rem}
The estimate in \eqref{eq high temp} is  valid for antiferromagnetic models and models with a magnetic field, as also considered in \cite{ChazottesRedigVolleringPIMRF2011}. In particular, the strong disagreement percolation method is not restricted to monotone environments.
\end{rem}

\section{Proofs}\label{sec proofs Florian}

In this section, we present the proofs of the theorems given in the previous sections. In Subsection \ref{sec proofs main results} we give the proofs of theorems in Section \ref{sec results Florian}. Proofs of theorems in Section \ref{sec examples} are given in Subsection \ref{sec proofs examples}. 
In the remaining subsections we present proofs of theorems from Section \ref{sec expansion}. In particular, Subsection \ref{sec SDP} introduces the strong disagreement coupling and contains the proof Theorem \ref{thm:sdp}.

\subsection{Proof  of main results}\label{sec proofs main results}
The main application of the expansion in Theorem \ref{thm expansion} for the proofs of Theorems \ref{def assump1} and \ref{def assump2} is the following lemma.

\begin{lem}\label{lem exp main}
Let $\Lambda \subset \mathbb{Z}^{d+1}$. For $B \in \mathcal{F}_{\Lambda}$ and $k \in \nat$ we have that
\begin{align}\label{eq eqfo 1}
|P_{\bP}(\eta_k^{EP} \in B) - \bP(B)|  \leq \sup_{A \in \mathcal{A}_{-\infty}} |\bP( B \mid A ) -\bP(B)|.
\end{align}
\end{lem}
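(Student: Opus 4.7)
My plan is to combine the expansion of Theorem \ref{thm expansion} with the trivial identity that the random walk paths of length $k$, weighted by the conditional probabilities $P(X_{-k,\dots,0}=\gamma \mid A)$ and the environment weights $\bP(A)$, form a probability distribution. This lets me rewrite both $P_{\bP}(\eta_k^{EP} \in B)$ and $\bP(B)$ in the same form, so that the difference factors as an average of the quantities $\bP(B \mid A) - \bP(B)$ that appear on the right-hand side.

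More precisely, I would first unfold $P_{\bP}(\eta_k^{EP} \in B) = \bP^{-k}(B)$ using Theorem \ref{thm expansion}, and then introduce the weights $w(\gamma, A) := \bP(A)\, P(X_{-k,\dots,0}=\gamma \mid A)$ for $\gamma \in \Gamma_k$ and $A \in \mathcal{A}_{-k}^{-1}(\gamma)$. Summing over $A \in \mathcal{A}_{-k}^{-1}(\gamma)$ for fixed $\gamma$ gives the annealed probability that the walk traverses $\gamma$, and summing over $\gamma \in \Gamma_k$ then gives $1$. Consequently
\begin{align*}
\bP(B) \;=\; \sum_{\gamma \in \Gamma_k} \sum_{A \in \mathcal{A}_{-k}^{-1}(\gamma)} \bP(B)\, \bP(A)\, P(X_{-k,\dots,0}=\gamma \mid A).
\end{align*}

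Subtracting this representation from the one given by Theorem \ref{thm expansion} and using $\bP(B, A) = \bP(A)\,\bP(B \mid A)$ (which holds for every $A \in \mathcal{A}_{-k}^{-1}(\gamma)$ by the very definition of $\mathcal{A}_{-k}^{-1}(\gamma)$) yields
\begin{align*}
P_{\bP}(\eta_k^{EP} \in B) - \bP(B) \;=\; \sum_{\gamma, A} \bP(A)\,\bigl[\bP(B \mid A) - \bP(B)\bigr]\, P(X_{-k,\dots,0}=\gamma \mid A).
\end{align*}
Taking absolute values, pulling out the supremum over $A$, and using $\sum_{\gamma, A} w(\gamma, A) = 1$ gives the estimate with supremum over $A \in \mathcal{A}_{-k}^{-1}$; since $\mathcal{A}_{-k}^{-1} \subset \mathcal{A}_{-\infty}$, this bound is in turn dominated by the supremum over $\mathcal{A}_{-\infty}$ claimed in \eqref{eq eqfo 1}.

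The argument is essentially bookkeeping once Theorem \ref{thm expansion} is in hand, so I do not anticipate a serious obstacle. The only subtle point is the normalization identity $\sum_{\gamma, A} w(\gamma, A) = 1$, which is where the structure of $\mathcal{A}_{-k}^{-1}(\gamma)$ as the set of \emph{admissible} (positive-probability) observations along $\gamma$ enters: restricting the inner sum to $A$ with $\bP(A) > 0$ is what makes the factorization $\bP(B, A) = \bP(A)\,\bP(B\mid A)$ legitimate for every term. Note that the assumption $B \in \mathcal{F}_\Lambda$ is not needed for the pointwise bound itself; it will only become relevant when the lemma is later applied with a supremum over $B$ in a specific sub-$\sigma$-algebra in order to derive the mixing statements of Theorems \ref{def assump1} and \ref{def assump2}.
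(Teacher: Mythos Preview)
Your proposal is correct and follows essentially the same approach as the paper: apply the expansion of Theorem \ref{thm expansion}, recognize that the weights $\bP(A)\,P(X_{-k,\dots,0}=\gamma\mid A)$ (which the paper writes as $\bP^{-k}(X_{-k,\dots,0}=\gamma, A_{-k}^{-1})$) sum to $1$, and use this to bring $\bP(B)$ inside the sum before bounding by the supremum over $A\in\mathcal{A}_{-\infty}$. Your side remark that the hypothesis $B\in\mathcal{F}_\Lambda$ plays no role in the inequality itself is also accurate.
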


\begin{proof}
Let $l \in \nat$ and consider any $B \in \mathcal{F}_{\Lambda}$.
From Theorem \ref{thm expansion} we have that for every $k \in \nat$,
\begin{align}
&\big| P_{\bP}\left(\eta_k^{EP} \in B\right) - \bP\left(B\right) \big|
\\= &\big| \sum_{\gamma  \in \Gamma_k} \sum_{A_{-k}^{-1} \in \mathcal{A}_{-k}^{-1} (\gamma)} \bP\left( B, A_{-k}^{-1} \right) P \left(X_{-k,\dots,0} = \gamma\mid A_{-k}^{-1} \right) - \bP\left(B\right)\big|
 \\= &\big| \sum_{\gamma  \in \Gamma_k} \sum_{A_{-k}^{-1} \in \mathcal{A}_{-k}^{-1} (\gamma)} \bP\left( B \mid A_{-k}^{-1} \right) \bP^{-k} \left(X_{-k,\dots,0} = \gamma, A_{-k}^{-1} \right) - \bP\left(B\right)\big|
\\ \leq &\sum_{\gamma  \in \Gamma_k} \sum_{A_{-k}^{-1} \in \mathcal{A}_{-k}^{-1} (\gamma)} \big| \bP\left( B \mid A_{-k}^{-1} \right) -\bP\left(B\right)\big| \bP^{-k} \left(X_{-k,\dots,0} = \gamma, A_{-k}^{-1} \right)
\\ \leq &\sup_{A \in \mathcal{A}_{-\infty}} \big|\bP\left( B \mid A \right) -\bP\left(B\right)\big|,
\end{align}
where in the second last and the last inequality we used the fact that 
\begin{align}
\sum_{\gamma  \in \Gamma_k}  \sum_{A_{-k}^{-1} \in \mathcal{A}_{-k}^{-1} (\gamma)}  \bP^{-k} \left(X_{-k,\dots,0} = \gamma, A_{-k}^{-1}(\gamma,\sigma) \right) =1.
\end{align}
\end{proof}

\begin{proof}[Proof of Theorem \ref{def assump1}]
Consider a sequence $(t_k)$ and a sequence of measures given by $\mathbb{Q}_k := \frac{1}{t_k} \sum_{t=0}^{t_k-1} P_{\bP}(\eta_{t}^{EP}\in \cdot)$ that converges weakly to $\mathbb{Q} \in \mathcal{M}_1(\Omega)$. By standard compactness arguments such a sequence exists and, moreover, any such limiting measure $\mathbb{Q}$ is invariant for $(\eta_t^{EP})$.
 A proof of the last claim is e.g.\ given in \cite{RassoulAghaEP2003}; see page 1457 in the proof of Theorem 3 therein.
 
 Since \eqref{eq cone mixing Florian} is assumed to hold, it follows by Lemma \ref{lem exp main} that for any  $l \in \nat$ and $B\in \mathcal{F}_{\mathcal{C}(l)}$,
 \begin{equation}\label{eq eqfo help}
| \mathbb{Q}_k(B) - \bP(B)| \leq \phi(l),
 \end{equation}
for some $\phi\colon \nat \rightarrow [0,1]$ such that $\lim_{l \rightarrow \infty} \phi(l)=0$. As this estimate is uniform in $k$, we claim that \eqref{eq eqfo help} also holds when $\mathbb{Q}_k$ is replaced by $\mathbb{Q}$. To see this, 
 consider the space of measures measurable with respect to $(\Omega,\mathcal{F}_{\mathcal{C}(l)})$. The ball of radius $\phi(l)$ around $\bP$ (in the total variation sense) is compact in the topology of weak convergence by the Banach-Alaoglu-Theorem. Here we use that the space is compact, the dual of the continuous bounded functions are finite signed measures equipped with the total variation norm, and the weak convergence of measures is the weak-* convergence in this functional-analytic setting. Since the ball is compact it is closed, and any limit point $\mathbb{Q}$ of the sequence $\mathbb{Q}_k$ is also inside the ball. Hence $|\mathbb{Q}(B)-\bP(B)|\leq \phi(l)$ for any $B\in \mathcal{F}_{\mathcal{C}(l)}$ and consequently, since $\lim_{l \rightarrow \infty}\phi(l)=0$, we have $\mathbb{Q} = \bP$ on $\mathcal{F}_{\infty}^{\infty}$.
 
 We continue with the proof that $\bQ$ is ergodic with respect to $(\eta_t^{EP})$, by following the proof of Theorem 2ii) in \cite{RassoulAghaEP2003}.  Denote by $\mathcal{I} \subset \mathcal{F}$ the $\sigma$-algebra consisting of those events invariant under the evolution of $(\eta_t^{EP})$. Further, let $f$ be any local bounded function on $\Omega$ and define $g=\mathbb{E}_{\bQ} \left(f \mid \mathcal{I} \right)$.
 Birkhoffs ergodic theorem implies that, 
 \begin{align}\label{eq Birkhoff1}
P_{\eta} \left( \lim_{n \rightarrow \infty} n^{-1} \sum_{m=1}^n f(\eta_m^{EP}) = g(\eta) \right) =1 ,\quad \text{for }\bQ\text{-a.e. }\eta \in \Omega.
 \end{align}
Using that $\bQ$ is invariant and that $g$ is harmonic, we have
\begin{align}
&\sum_{|x| \leq R} \int \alpha(\eta,x) (g(\eta) - g(\theta_{x,1} \eta))^2 \bQ(d\eta)
\\ = &\int g^2(\eta) \bQ(d\eta) - 2 \int g(\eta) \sum_{|x|\leq R} \alpha(\eta,x) g(\theta_{x,1}\eta) \bQ(d\eta) 
\\ &\quad \quad + \int \sum_{|x|\leq R} \alpha(\eta,x) (g(\theta_{x,1}\eta))^2 \bQ(d\eta)
\\ = & 0.
\end{align}
In particular, since $(X_t)$ is elliptic in the time direction, $g=g \circ \theta_{o,1}$, $\bQ$-a.s. 

Next, for each $t \in \nat$, denote by $B_t \subset \{X_i=o \text{ for all }i \in \{0,\dots, t\}\}$ the event that the random walk does not move in the time-interval $[0,t]$, irrespectively of the environment. Since $(X_t)$ is elliptic in the time direction, $B_t$ has strictly positive probability and can be taken independently of the environment. Further, define
\begin{align}
\bar{g}(\eta) := \limsup_{n \rightarrow \infty} \frac{1}{\bQ(B_t)} \int_{B_t} n^{-1} \sum_{m=1}^n f(\eta_m^{EP}) dP_{\eta}.
\end{align}
 Then, because of \eqref{eq Birkhoff1}, we know that $g=\bar{g}$, $\bQ$-a.s. Further, using the above mentioned independence property, and by possibly taking $t$ large, we note that $\bar{g}$ is $\mathcal{C}(k)$-measurable for any $k\in \nat$.  Consequently, the same holds for $g$, and hence $g$ is $\mathcal{F}_{\infty}^{\infty}$-measurable. Furthermore, since $\bQ = \bP$ on $\mathcal{F}_{\infty}^{\infty}$, this implies that \eqref{eq Birkhoff1} holds $\bP$-a.s., and that $g=g \circ \theta_{o,1}$, $\bP$-a.s. As $\bP$ is ergodic with respect to $\theta_{o,1}$, it  moreover follows that $g$ is constant $\bP$-a.s., and hence also $\bQ$-a.s. Since $f$ was an arbitrary local bounded function, we conclude from this that $\mathcal{I}$ is trivial and thus that $\bQ$ is ergodic with respect to $(\eta_t^{EP})$.

 To conclude the proof we also note that \eqref{eq converge ergodic averages} holds. Indeed, since $\bQ$ was an arbitrary (sub) sequence of $(\bQ_k)$, all the estimates above are valid for any such limiting measure. In particular, each of these limiting measures equal $\bP$ on $\mathcal{F}_{\infty}^{\infty}$, and consequently, they are all ergodic and equal on $\mathcal{I}$. Thus, they are the same, and we conclude that \eqref{eq converge ergodic averages} holds with respect to $\bP$, where we call the limiting measure $\bP^{EP}$. 
Initialising $(\eta_t^{EP})$ with any other probability measure, absolute continuous with respect to $\bP$ on $\mathcal{F}_{\infty}^{\infty}$, the exact same argument as outlined above applies, from which we conclude  \eqref{eq converge ergodic averages} and the proof of Theorem \ref{def assump1}.
  \end{proof}

 \begin{proof}[Proof of Corollary \ref{thm new lln}]
 The claim is an (almost direct) application of ergodicity and that $\bP=\bP^{EP}$ on $\mathcal{F}_{\infty}^{\infty}$. 
Indeed, let $D(\eta) := \sum_{z\in \mathbb{Z}^{d}} z \alpha(\eta,z)$ be the local drift of the random walker in environment $\eta$. A direct consequence of Theorem \ref{def assump1} is that
\begin{align}\label{eq help help help}
P_{\bP^{EP}} \left(\lim_{n\rightarrow \infty}n^{-1} \sum_{k=1}^n D(\theta_{X_k,k}\eta) = v \right) =1,
\end{align}
where $v= \int D(\eta) P_{\bP^{EP}}(d\eta)$. By using that $\bP = \bP^{EP}$ on $\mathcal{F}_{\infty}^{\infty}$, it follows that this also holds with respect to $P_{\bP}$. 
 Now, note that $M_n = X_n - \sum_{m=0}^{n-1} D(\theta_{X_k,k}\eta)$ is a martingale with bounded increments under $P_{\eta}$. Therefore $P_{\eta}(\lim_{n \rightarrow \infty} n^{-1}M_n =0)=1$ which together with \eqref{eq help help help} implies the law of large numbers.
  \end{proof}

We next turn to the proof of Theorem \ref{def assump2}.
The following lemma is essentially copied from \cite{BergerCohenRosenthalLLT2014}.

\begin{lem}\label{lem thm 4.2 help}
Assume $\bP^{EP}$ is invariant with respect to $(\eta_t^{EP})$ and that $\bP^{EP}$ and $\bP$ restricted to $(\Omega, \mathcal{F}_{\geq0})$ are not singular. Assume $(X_t)$ is elliptic. Then there exists  $\bP^{EP}_c \in \mathcal{M}_1(\Omega)$, invariant for $(\eta_t^{EP})$ and mutually absolutely continuous to $\bP$ on $(\Omega,\mathcal{F}_{\geq0})$.
\end{lem}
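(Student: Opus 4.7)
The approach is to Lebesgue-decompose $\bP^{EP}$ against $\bP$ on $\mathcal{F}_{\geq 0}$, identify the resulting density as a dual fixed point of the environment-process kernel $S$, and manufacture an $S$-invariant probability measure on $(\Omega,\mathcal{F})$ as a Ces\`aro weak limit of forward iterates. A final ergodic argument invoking ellipticity upgrades one-sided to mutual absolute continuity on $\mathcal{F}_{\geq 0}$.

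Write $\bP^{EP}|_{\mathcal{F}_{\geq 0}} = \bP^{EP}_{ac} + \bP^{EP}_s$, set $f := d\bP^{EP}_{ac}/d\bP$, and $c := \int f\,d\bP > 0$ by non-singularity. The crucial structural observation is that $S$ preserves $\mathcal{F}_{\geq 0}$-measurability: for $B\in\mathcal{F}_{\geq 0}$, the weight $\alpha(\cdot,z)$ in $S\ind_B(\eta) = \sum_{z\in\mathcal{R}}\alpha(\eta,z)\ind_B(\theta_{z,1}\eta)$ is $\mathcal{F}_{=0}$-measurable while $\ind_B(\theta_{z,1}\cdot)$ is $\mathcal{F}_{\geq 1}$-measurable (the shift advances time by one). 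Combined with translation invariance of $\bP$, this gives $S\ind_B = 0$ $\bP$-a.s.\ whenever $\bP(B) = 0$, so $\mu \ll \bP$ on $\mathcal{F}_{\geq 0}$ implies $\mu S \ll \bP$ on $\mathcal{F}_{\geq 0}$. Next, extend $\bP^{EP}_{ac}$ to $\widetilde{\bP}^{EP}_{ac}(\cdot) := \bP^{EP}(\cdot \cap N^c)$ on $\mathcal{F}$, where $N\in\mathcal{F}_{\geq 0}$ carries $\bP^{EP}_s$ and satisfies $\bP(N) = 0$. Then $\widetilde{\bP}^{EP}_{ac}\leq\bP^{EP}$ on $\mathcal{F}$, so $\widetilde{\bP}^{EP}_{ac} S\leq\bP^{EP} S = \bP^{EP}$; restricting to $\mathcal{F}_{\geq 0}$ and combining with the absolute continuity just established forces $\widetilde{\bP}^{EP}_{ac} S|_{\mathcal{F}_{\geq 0}}\leq \bP^{EP}_{ac}$. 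Both measures have total mass $c$, hence coincide. Rephrased via densities, $\int Sh\cdot f\,d\bP = \int h\cdot f\,d\bP$ for every bounded $\mathcal{F}_{\geq 0}$-measurable $h$.

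Let $\mu_0 := (f/c)\cdot\bP$, a probability measure on $(\Omega,\mathcal{F})$. Since $S\ind_B$ is $\mathcal{F}_{\geq 0}$-measurable for $B\in\mathcal{F}_{\geq 0}$, the identity $\mu_0 S|_{\mathcal{F}_{\geq 0}} = \mu_0|_{\mathcal{F}_{\geq 0}}$ follows immediately from the fixed-point relation and iterates to $\mu_0 S^k|_{\mathcal{F}_{\geq 0}} = \mu_0|_{\mathcal{F}_{\geq 0}}$ for all $k\geq 0$. By compactness of $\mathcal{M}_1(\Omega)$ extract a weak subsequential limit $\bP^{EP}_c$ of the Ces\`aro averages $t^{-1}\sum_{k=0}^{t-1}\mu_0 S^k$; the standard telescoping argument yields $S$-invariance. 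Since cylinders in $\mathcal{F}_{\geq 0}$ are clopen in the product topology, weak convergence plus a $\pi$-$\lambda$ argument give $\bP^{EP}_c|_{\mathcal{F}_{\geq 0}} = (f/c)\cdot\bP$, so in particular $\bP^{EP}_c\ll\bP$ on $\mathcal{F}_{\geq 0}$.

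For the reverse absolute continuity, set $A := \{f = 0\}$ and test the fixed-point relation with $h = \ind_A$ to obtain $\int_{A^c}\sum_z \alpha(\eta,z)\ind_A(\theta_{z,1}\eta) f(\eta)\,d\bP(\eta) = 0$. Since $f>0$ on $A^c$ and $\alpha(\eta,o) > 0$ pointwise by ellipticity, this forces $\theta_{o,1}\eta \in A^c$ for $\bP$-a.e.\ $\eta\in A^c$; translation invariance then makes $A$ essentially $\theta_{o,1}$-invariant, and ergodicity of $\bP$ in the time direction yields $\bP(A) \in\{0,1\}$, with $\bP(A) = 1$ excluded by $\int f\,d\bP = c > 0$. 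Hence $f > 0$ $\bP$-a.s., giving $\bP\ll\bP^{EP}_c$ on $\mathcal{F}_{\geq 0}$. The step I expect to be technically most delicate is the fixed-point identification for $f$ in the second paragraph; it sidesteps the usual non-preservation of absolute continuity under weak limits and hinges on the $\mathcal{F}_{\geq 0}$-invariance of $S$, itself a consequence of the finite range of $\alpha$ combined with the forward time-shift built into the definition of the environment process.
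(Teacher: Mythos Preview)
Your proof is correct and follows the same overall architecture as the paper's: Lebesgue-decompose $\bP^{EP}$ against $\bP$ on $\mathcal{F}_{\geq 0}$, argue that the absolutely continuous piece yields an invariant measure, and then use time-ergodicity of $\bP$ together with ellipticity to show the density is strictly positive $\bP$-a.s. The difference lies in how the middle step is executed. The paper observes that ellipticity on a finite alphabet is automatically \emph{uniform} ellipticity, and uses this to commute ``take the absolutely continuous part'' with the kernel sum, obtaining directly
\[
\bP^{EP}_c \;=\; \Bigl(\sum_{y\in\mathcal{R}}\alpha(\cdot,y)\,\theta_{y,1}\!\circ\bP^{EP}\Bigr)_c
\;=\; \sum_{y\in\mathcal{R}}\alpha(\cdot,y)\,\theta_{y,1}\!\circ\bP^{EP}_c,
\]
so the normalized a.c.\ part is itself $S$-invariant and no Ces\`aro averaging is needed. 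Your route is softer: you show that $S$ preserves absolute continuity on $\mathcal{F}_{\geq 0}$ (via $\mathcal{F}_{\geq 0}$-measurability of $S\ind_B$ and translation invariance of $\bP$), combine this with the domination $\widetilde{\bP}^{EP}_{ac}S\leq\bP^{EP}$ and mass conservation to pin down $f$ as a dual fixed point, and then Ces\`aro-average to produce an $S$-invariant measure on the full $\sigma$-algebra $\mathcal{F}$ that still has density $f/c$ on $\mathcal{F}_{\geq 0}$. This is slightly longer but has two advantages: it makes explicit the passage from an invariant density on $\mathcal{F}_{\geq 0}$ to an invariant probability on $(\Omega,\mathcal{F})$, a point the paper treats implicitly; and your argument for invariance of the a.c.\ part, as well as the final positivity-of-$f$ step, uses only pointwise ellipticity (indeed only $\alpha(\cdot,o)>0$) rather than the uniform lower bound $\epsilon$ the paper extracts from finiteness of $E$.
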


\begin{proof}[Proof of Lemma \ref{lem thm 4.2 help}]
Consider the (unique) Lebesgue decomposition of $\bP^{EP}$ with respect to $\bP$ restricted to $(\Omega, \mathcal{F}_{\geq0})$. That is, let
\begin{align}
\bP^{EP}(B) = \alpha \bP^{EP}_c(B) + (1-\alpha) \bP^{EP}_s(B), \quad \forall \: B \in \mathcal{F}_{\geq0},
\end{align}
where $\bP^{EP}_c \ll \bP$ and $\bP^{EP}_s \bot \bP$ on $(\Omega, \mathcal{F}_{\geq0})$. By assumption, we know that $\alpha>0$. If $\alpha=1$, the statement is immediate. Thus, assume $\alpha \in (0,1)$.
In a first step, observe that $(\theta_{y,1} \circ \bP^{EP})_{c} = \theta_{y,1} \circ \bP^{EP}_c$ for every $y \in \mathcal{R}$. This follows from translation invariance of $\bP$ which implies that taking the continuous part with respect to $\bP$ is the same as taking the continuous part with respect to $\theta_{y,1} \circ \bP$. The same is true for the singular part $\bP^{EP}_s$.

 Note that, since $E$ is finite and $(X_t)$ is finite range, we have that ellipticity in fact implies uniform ellipticity. That is, there is an $\epsilon>0$ such that
\begin{align}
\inf_{y \in \mathcal{R}} \inf_{\eta \in \Omega} \alpha(\eta,y) \geq \epsilon >0.
\end{align}
In particular, by invariance of $\bP^{EP}$,
\begin{align}
\bP^{EP} = \sum_{y \in \mathcal{R}} \alpha(\cdot,y) \theta_{y,1} \circ \bP^{EP} \geq \epsilon \sum_{y \in \mathcal{R}}  \theta_{y,1} \circ \bP^{EP} 
\end{align}
and therefore $\theta_{y,1} \circ \bP^{EP} \ll \bP^{EP}$ for every $y \in \mathcal{R}$.
By using first ellipticity and then $(\theta_{y,1} \circ \bP^{EP})_{c} = \theta_{y,1} \circ \bP^{EP}_c$ we have
\begin{align}
\bP^{EP}_c &= \left(\sum_{y \in \mathcal{R}} \alpha(\cdot, y) \theta_{y,1} \circ \bP^{EP}\right)_c
= \sum_{y \in \mathcal{R}} \alpha(\cdot, y) \left(\theta_{y,1} \circ \bP^{EP}\right)_c
\\ &= \sum_{y \in \mathcal{R}} \alpha(\cdot, y) \theta_{y,1} \circ \bP^{EP}_c , \label{eq singular 1}
\end{align}
which means that $\bP^{EP}_c$ is invariant for $(\eta_t^{EP})$.

Let $f= \frac{d\bP^{EP}_c}{d\bP}$ and define $B= \{ \eta \in \Omega \colon f(\eta) >0\}$. As a consequence of \eqref{eq singular 1} we have 
\begin{align}
f(\eta) \geq \epsilon \sum_{y \in \mathcal{R}} f(\theta_{y,1} \eta),
\end{align}
and, in particular, $\eta \in B$ implies $\theta_{y,1} \eta \in B$, $y\in \mathcal{R}$. In particular, $B$ is invariant under $\theta_{o,1}$, and by ergodicity of $\bP$ this is a $0-1$ event. Since by assumption $\alpha >0$ we have $\bP(B)=1$ and therefore $\bP \ll \bP^{EP}_c$ on $(\Omega, \mathcal{F}_{\geq0})$.
\end{proof}

 \begin{lem}\label{lem extended expansion}
 Let  $\Lambda \subset \mathbb{Z}^{d+1}$ finite and fix $\sigma \in E^{\Lambda^c}$. Let $\bP(\cdot \mid \sigma) $ and $\bP^{-k}(\cdot \mid \sigma)$ be the regular conditional probabilities of $\bP$ and $\bP^{-k}$ on $E^{\Lambda}$ given $\sigma$.
 Then, for $B \in \mathcal{F}_{\Lambda}$ and $k \geq 1$,
\begin{align}
\bP^{-k}( B\mid \sigma) = \sum_{\gamma \in \Gamma_k} \sum_{A_{-k}^{-1} \in \mathcal{A}_{-k}^{-1}(\gamma) } \bP \left( B, A_{-k}^{-1} \mid  \sigma \right) P \left(X_{-k,\dots,0}=\gamma \mid A_{-k}^{-1} \right).
\end{align}
 \end{lem}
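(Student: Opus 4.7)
The plan is to adapt the proof of Theorem \ref{thm expansion}, carrying the conditioning on $\sigma \in E^{\Lambda^c}$ through every step. I would begin by decomposing
\[
\bP^{-k}(B\mid\sigma) = \sum_{\gamma\in\Gamma_k}\sum_{A_{-k}^{-1}\in\mathcal{A}_{-k}^{-1}(\gamma)} \bP^{-k}\bigl(B,\, X_{-k,\dots,0}=\gamma,\, A_{-k}^{-1} \bigm| \sigma\bigr),
\]
and aim to show that each summand factorises as $\bP(B,A_{-k}^{-1}\mid\sigma)\cdot P(X_{-k,\dots,0}=\gamma\mid A_{-k}^{-1})$. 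Summing the factored summands over $(\gamma,A_{-k}^{-1})$ then yields the claimed identity.

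To establish the factorisation, I would repeat the chain of equalities from the proof of Theorem \ref{thm expansion} under the conditional measure $\bP(\cdot\mid\sigma)$. Unfolding the definition of $\bP^{-k}$, the condition $X_{-k,\dots,0}=\gamma$ forces $X_k=-\gamma_{-k}$, so each summand rewrites as the $P_\bP$-probability that the walker follows the trajectory $\gamma-\gamma_{-k}$ while the shifted environment $\theta_{-\gamma_{-k},k}\eta$ lies in $B\cap A_{-k}^{-1}$, all under the conditioning on $\sigma$. The walker contribution factors out cleanly as $P(X_{-k,\dots,0}=\gamma\mid A_{-k}^{-1})$ because, by the locality assumption \eqref{eq assump alpha}, the transition weights $\alpha$ depend only on the local environment along the trajectory that is already captured by $A_{-k}^{-1}$ and are therefore insensitive to the extra conditioning on $\sigma$. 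Translation invariance of $\bP$ is then applied to the shifted event within the conditional measure to yield the remaining factor $\bP(B\cap A_{-k}^{-1}\mid\sigma)$.

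The main obstacle is verifying that translation invariance is compatible with the conditioning on $\sigma$, i.e.\ the identity
\[
\bP\bigl(\theta_{-\gamma_{-k},k}\eta \in B\cap A_{-k}^{-1} \bigm| \sigma\bigr) = \bP\bigl(B\cap A_{-k}^{-1} \bigm| \sigma\bigr).
\]
This is the delicate point not present in the proof of Theorem \ref{thm expansion}, where the corresponding equality held unconditionally by pure translation invariance. The crucial observation is that the shift $(-\gamma_{-k},k)$ is deterministic once $\gamma$ is fixed, so translation invariance can be applied simultaneously to the combined event $B\cap A_{-k}^{-1}$ and to the conditioning event $\{\eta|_{\Lambda^c}=\sigma\}$. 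Once this compatibility is carefully justified via the regular conditional probability construction, collecting the environmental and walker factors and summing yields the expansion.
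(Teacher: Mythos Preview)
Your proposal is correct and follows essentially the same approach as the paper: decompose over $(\gamma, A_{-k}^{-1})$, factor out the walker contribution via the locality of $\alpha$, and then apply translation invariance simultaneously to the event and to the conditioning on $\sigma$. The paper organises the computation slightly differently---it splits each summand via Bayes into $\bP^{-k}(B\mid A_{-k}^{-1}, X_{-k,\dots,0}=\gamma,\sigma)\cdot\bP^{-k}(X_{-k,\dots,0}=\gamma,A_{-k}^{-1}\mid\sigma)$ and simplifies each factor separately, writing the shifted conditioning explicitly as $\theta_{-\gamma_{-k},k}\sigma$---but the substance is the same and you have correctly identified that shifting the conditioning event along with the main event is the only new ingredient beyond Theorem~\ref{thm expansion}.
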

 
 \begin{proof}
 The proof is mostly as for the unconditional expansion. Additionally we use the following equalities:
 \begin{align}
 \bP^{-k}(B \mid A_{-k}^{-1}, X_{-k,\dots, 0}=\gamma, \sigma) &= P_{\bP}( \theta_{-\gamma_{-k},k}B \mid \theta_{-\gamma_{-k},k} A_{-k}^{-1}, \theta_{-\gamma_{-k},k} \sigma)
 \\ &=\bP(B\mid A_{-k}^{-1}, \sigma)
 \end{align}
and 
\begin{align}
&\bP^{-k}( X_{-k,\dots,0} = \gamma, A_{-k}^{-1} \mid \sigma)
\\= &P_{\bP}(\eta_k^{EP} \in A_{-k}^{-1}, X_{0,\dots, k} =  \gamma - \gamma_{-k} 
\mid \theta_{X_k,k} \sigma )
\\ = &P( X_{0,\dots,k} = \gamma - \gamma_{-k} \mid A_{-k}^{-1})
\bP( \theta_{-\gamma_{-k,k}} \eta \in A_{-k}^{-1} \mid \theta_{-\gamma_{-k,k}} \sigma)
\\ =  &P( X_{0,\dots,k} = \gamma - \gamma_{-k} \mid A_{-k}^{-1}) \bP(  \eta \in A_{-k}^{-1} \mid  \sigma).
\end{align}
 Take also note that summation should only include events $A_{-k}^{-1}$ which have positive probability with respect to the conditional law given $\sigma$.
 \end{proof}

  \begin{proof}[Proof of Theorem \ref{def assump2}]
  By applying the same line of reasoning as in the proof of Theorem \ref{def assump1}, it easy to see that  \eqref{eq eqfo in thm} holds as a consequence of \eqref{eq bcm} and Lemma \ref{lem exp main}. In particular, there is a measure $\bQ$ invariant under $(\eta_t^{EP})$ such that \eqref{eq eqfo in thm} holds, and consequently $\bQ = \bP$ on $\mathcal{F}_{\geq 0}^{\infty}$. We focus on the proof that $\bQ$ and $\bP$ are mutually absolutely continuous under the additional assumption that $(X_t)$ is elliptic.

  Since \eqref{eq eqfo in thm} holds, there is an $l \in \nat$ such that $\sup_{B \in \mathcal{F}_{\Lambda(l)}} |\bQ(B)-\bP(B)|<1$. In particular, $\bP$ and $\bQ$ are not singular on  $\mathcal{F}_{\Lambda(l)}$.  In order to conclude that $\bQ$ and $\bP$ are not singular on $\mathcal{F}_{\geq 0}$ we make use of  Lemma \ref{lem extended expansion} and the assumption that $|E|<\infty$. Indeed, for any $\sigma \in E^{\Lambda(l)}$ we have by Lemma \ref{lem extended expansion} that
 $ \bP^{-k}(\cdot \mid \sigma) \ll \bP(\cdot \mid \sigma)$.
  Further, since $E$ is finite any local function is continuous and hence we also have $\bQ(\cdot \mid \sigma) \ll \bP(\cdot \mid \sigma)$. And since $\bQ$ is non-singular on $\Lambda(l)$ with respect to $\bP$ it has non-trivial continuous part and corresponding density on $\Lambda(l)$. Thus, we now also have shown that conditioned on $\Lambda(l)$ the measure $\bQ$ has a density inside $\bH \setminus \Lambda(l)$. It hence follows that $\bQ$ is not singular with respect to $\bP$ on $\bH$. As a consequence of Lemma \ref{lem thm 4.2 help} and that $\bQ$ and $\bP$ are not singular on $\bH$ we conclude that, when $(X_t)$ is elliptic, there is a measure $\bP^{EP} \in \mathcal{M}_1(\Omega)$ invariant under $(\eta_t^{EP})$ such that $\bP^{EP}$ and $\bP$ are mutually absolutely continuous on $\mathcal{F}_{\geq 0}$. 
  
   \end{proof} 
  
\begin{proof}[Proof of Corollary \ref{thm qclt}]
Corollary \ref{thm qclt} is an application of Theorem 1 in \cite{DolgopyatKellerLIveraniRWME2008}.
In order to fulfil the requirements of their theorem six conditions needs to be satisfied, i.e. (A0)-(A5) therein. Our main contribution is that Condition (A1) is satisfied when \eqref{eq QCLT condition} holds. This follows as a direct consequence of Theorem \ref{def assump2}. Furthermore, that Conditions (A0)  holds under \eqref{eq QCLT condition} follows partly by the mixing assumptions on $\mu$. Moreover, that  also $\mu^{EP}$ and $\mu$ are mixing is a consequence of Theorem 3.4 in \cite{RedigVolleringRWDRE2013} which yields exponential rate of convergence for environment process under the assumption that \eqref{eq QCLT condition} holds. By the same reasoning, Theorem 3.4 in \cite{RedigVolleringRWDRE2013} also implies that Condition (A2) holds true. Lastly, we note that Conditions (A3) and (A4) are true by assumption and 
Condition (A5) is satisfied since $(X_t)$ is elliptic.
\end{proof}

 \subsection{Proof of examples}\label{sec proofs examples}

\subsubsection{Proof of Theorem \ref{thm iid space}}

\begin{proof}

For $k \in \nat$, let $\gamma \in \Gamma_k$ and consider $A \in
\mathcal{A}_{-k}^{-1}(\gamma)$. Since $A$ consists of a fixed observation of
the environment along the path $\gamma$ we can write
$A=\bigcap_{x\in\bZ^d}A_x$, where $A_x$ is the observation on the line $\{
(x,s) \colon s \in \integers \}$. Without change of notation we also treat
$A_x$ as an event on the space $E^\bZ$.
Denote by $\hP_x$ the optimal coupling (in the sense of total variation
distance) of $\bP_o(\cdot \mid A_x)$ and $\bP_o(\cdot)$, and by $\hP=
\bigtimes_{x \in \mathbb{Z}^{d}} \hP_x$. The product structure of $\bP$ plus the fact
that $A$ is given by the intersection of the events $A_x$ gives us that $\hP$
is a coupling of $\bP(\cdot \mid A)$ and $\bP(\cdot)$.

For $l \in \nat$, let $B\in \mathcal{F}_{\Lambda(l)}$. 
We  have that 
\begin{align}
\left| \bP(B\mid A) - \bP(B) \right| &\leq \hP\left( \exists (x,s) \in \Lambda(l)
\colon \eta_s^1(x) \neq \eta_s^2(x) \right)
\\ &\leq \sum_{x \in \mathbb{Z}^{d}} \hP\left( \exists s \geq 0 \colon (x,s) \in
\Lambda(l) \text{ and } \eta_s^1(x) \neq \eta_s^2(x) \right)
\\ &\leq \sum_{t <0} \sum_{(x,t) \in \gamma + [-R,R]^d\times \{0\}}
\sup_{B\in\mathcal{G}_{\geq 0\vee (l-|x|)}} \left|\bP_o( B|A_x)-\bP(B)\right|.
\end{align}
The last line follows from the fact that $\eta^1(x)$ and $\eta^2(x)$ can only
differ if the site $x$ is part of the observation $A$, since otherwise
$A_x=E^\bZ$. Condition  \eqref{eq iid space} thus ensures  that the sum in the
last line is finite. In particular, the sum converges to $0$ as $l\rightarrow
\infty$. This shows that $\bP$ satisfies \eqref{eq bcm}.
\end{proof}

\subsubsection{Proof of Corollary \ref{thm:HMM-2}}

Corollary \ref{thm:HMM-2} follows by a slightly stronger statement, which we state and prove first.

\begin{thm}\label{thm fsp}
Assume that $\bP \in \mathcal{M}_1(\Omega)$ has finite speed of propagation and that \begin{align}\label{eq t to the d}
\sum_{t \geq 1} t^d \sup_{A' \in \mathcal{A}_{-\infty}^{-t}} \hP_{\Omega, A'} \left( \eta_0^1(o) \neq \eta_0^2(o) \right)< \infty.
\end{align}
Then $\bP$ satisfies the conditions of Theorem \ref{def assump2}.
\end{thm}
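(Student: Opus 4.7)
The plan is to verify the hypothesis of Theorem~\ref{def assump2}, namely to produce $\phi:\nat\to[0,1]$ with $\phi(l)\to 0$ such that
\[
\sup_{B\in\mathcal{F}_{\Lambda(l)}}\sup_{A\in\mathcal{A}_{-\infty}}|\bP(B\mid A)-\bP(B)|\leq \phi(l).
\]
I would fix $A\in\mathcal{A}_{-\infty}$, construct a coupling $\widehat{\bP}^{*}_A$ of $\bP$ and $\bP(\cdot\mid A)$ by combining the coupling in \eqref{eq t to the d} with the finite-speed-of-propagation coupling from \eqref{eq finite speed of propagation}, and then use the standard total-variation bound
\[
|\bP(B\mid A)-\bP(B)|\leq \sum_{(x,s)\in\Lambda(l)}\widehat{\bP}^{*}_A\bigl(\eta^1_s(x)\neq\eta^2_s(x)\bigr),\qquad B\in\mathcal{F}_{\Lambda(l)}.
\]

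The next step is a pointwise bound on $\widehat{\bP}^{*}_A(\eta^1_s(x)\neq\eta^2_s(x))$. Using translation invariance of both underlying couplings this equals $\widehat{\bP}^{*}_{\theta_{-x,-s}A}(\eta^1_0(o)\neq\eta^2_0(o))$, and since $A$ has its latest observation at time $-1$, the shifted event $\theta_{-x,-s}A$ lies in $\mathcal{A}^{-(s+1)}_{-\infty}$. Hypothesis \eqref{eq t to the d} then yields a bound depending only on $s$, namely $\psi(s+1):=\sup_{A'\in\mathcal{A}^{-(s+1)}_{-\infty}}\widehat{\bP}_{\Omega,A'}(\eta^1_0(o)\neq\eta^2_0(o))$. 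This estimate is uniform in $x$, so a naive summation over $x\in\bZ^d$ diverges at each fixed time slice; additional spatial decay has to be extracted.

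To obtain that decay, I would further split $\theta_{-x,-s}A$ into its deep-past part at times $\leq -T$ and its recent part at the remaining times, for an intermediate scale $T=T(\|x\|_1,s)$. The deep-past part is controlled by \eqref{eq t to the d} with bound $\psi(T)$, while the recent part has spatial support along the shifted path at distance at least $\|x\|_1-RT$ from the origin, so after a time reparametrisation it fits the hypothesis of finite speed of propagation, giving a FSP-type bound that decays with $\|x\|_1$. Balancing $T$ (for instance $T\sim\|x\|_1/(2R)$) then produces a pointwise bound on $\widehat{\bP}^{*}_A(\eta^1_s(x)\neq\eta^2_s(x))$ decaying in both $s$ and $\|x\|_1$.

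The main obstacle is the book-keeping in the union bound: at each time slice the number of spatial sites contributing to the sum grows polynomially, and this is precisely what the $t^d$ factor in both \eqref{eq t to the d} and \eqref{eq finite speed of propagation} is designed to absorb. Choosing $T=T(l)$ growing with $l$ but slowly (say $T=\lfloor \epsilon l\rfloor$ for small $\epsilon$), and using the two summabilities $\sum_{t}t^d\psi(t)<\infty$ and the FSP summability, I expect to conclude that $\sum_{(x,s)\in\Lambda(l)}\widehat{\bP}^{*}_A(\eta^1_s(x)\neq\eta^2_s(x))\to 0$ as $l\to\infty$ uniformly in $A$, which yields the desired $\phi(l)$ and completes the verification of Theorem~\ref{def assump2}.
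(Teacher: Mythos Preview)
Your overall strategy---union bound over $(x,s)\in\Lambda(l)$, then for each site split the conditioning $A$ into a ``deep past'' piece controlled by \eqref{eq t to the d} and a ``recent'' piece controlled by finite speed of propagation---is exactly what the paper does. The paper makes the ``combining'' step precise by introducing, for each site, an intermediate event $A'\in\mathcal{A}_{-\infty}^{-s}$ with $A'\supset A$ and a \emph{triple} coupling $\tilde\bP_{A,A',\Omega}$, then applying the triangle inequality
\[
\hP_{A,\Omega}(\eta^1_t(x)\neq\eta^2_t(x))\leq \hP_{A,A'}(\eta^1_t(x)\neq\eta^2_t(x))+\hP_{A',\Omega}(\eta^1_t(x)\neq\eta^2_t(x)),
\]
the first term handled by FSP and the second by \eqref{eq t to the d} after translation. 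You should phrase your splitting this way rather than speaking of a single combined coupling $\widehat\bP^*_A$.

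Two points need repair. First, the claim $\theta_{-x,-s}A\in\mathcal{A}^{-(s+1)}_{-\infty}$ is not literally correct: events in $\mathcal{A}^{-m}_{-\infty}$ are along paths that end at the origin, and shifting destroys this. The paper avoids this by using translation invariance of the couplings directly (which is part of the FSP definition) rather than by relabelling the event. Second, and more importantly, your cutoff must be chosen \emph{per site}: your initial suggestion $T\sim\|x\|_1/(2R)$ is in the right spirit (the paper takes the cutoff $s$ so that $\alpha s=\|(x,t)\|_1$), but your later suggestion $T=\lfloor\epsilon l\rfloor$ cannot work, since it gives a bound independent of $(x,s)$ and the sum over the infinite set $\Lambda(l)$ then diverges. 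With the site-dependent choice the two summability assumptions absorb the $\sim s^d$ many sites at distance $\alpha s$, exactly as in the paper.
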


\begin{proof}[Proof of Theorem \ref{thm fsp}]
Let $l \in \nat$ and consider $B \in \mathcal{F}_{\Lambda(l)}$. 
By Theorem \ref{def assump2}, it is sufficient to obtain uniform estimates of the form
$|\bP(B \mid A)-\bP(B)| \leq \phi(l)$, where $A \in \mathcal{A}_{-\infty}$, and where $\phi(l)$ approaches $0$ as $l \rightarrow \infty$.
For this, we first note that
\begin{align}
\left|\bP \left(B \mid A\right) - \bP\left(B\right)\right| &\leq \hP_{A,\Omega}\left( \eta_t^1(x) \neq \eta_t^2(x) \text{ for some } (x,t) \in \Lambda(l) \right)
\\&\leq \sum_{(x,t) \in \Lambda(l)} \hP_{A,\Omega}\left( \eta_t^1(x) \neq \eta_t^2(x) \right).
\end{align}
Thus, it suffices to control $\hP_{A,\Omega}\left( \eta_t^1(x) \neq \eta_t^2(x) \right)$ for each $(x,t) \in \Lambda(l)$. For this, fix $(x,t) \in \Lambda(l)$ such that $\norm{(x,t)}_1 \geq \alpha s$ for some $s \geq0$. 
Further, let $A' \in \mathcal{A}_{-\infty}^{-s}$ be such that $A'\cap A= A$, and denote by $\tilde{\bP}_{A,A',\Omega}$ a measure on $\Omega \times \Omega \times \Omega$ such that 
\begin{align}
&\tilde{\bP}_{A,A',\Omega} (\eta^{1} \in \cdot, \eta^{2} \in \cdot, \Omega) = \hP_{A,A'} ( \eta^1\in \cdot, \eta^2 \in \cdot); 
\\&\tilde{\bP}_{A,A',\Omega} (\eta^{1} \in \cdot, \Omega, \eta^{3} \in \cdot) =  \hP_{A,\Omega} (\eta^1 \in \cdot, \eta^3 \in \cdot);
\\&\tilde{\bP}_{A,A',\Omega}( \Omega, \eta^{2} \in \cdot, \eta^{3} \in \cdot) =  \hP_{A',\Omega}( \eta^2 \in \cdot, \eta^3 \in \cdot).
\end{align}
 We then have that
\begin{align}
\hP_{A,\Omega}\left( \eta_t^1(x) \neq \eta_t^2(x) \right) = &\tilde{\bP}_{A,A',\Omega} (\eta_t^{1}(x)\neq  \eta_t^{3}(x) )
\\ \leq & \tilde{\bP}_{A,A',\Omega} (\eta_t^{1}(x)\neq  \eta_t^{2}(x) \text{ or }  \eta_t^{2}(x)\neq  \eta_t^{3}(x) )
\\ \leq & \tilde{\bP}_{A,A',\Omega} (\eta_t^{1}(x)\neq  \eta_t^{2}(x) ) +  \tilde{\bP}_{A,A',\Omega} (\eta_t^{2}(x)\neq  \eta_t^{3}(x) )
\\ = &  \hP_{A,A'} \left(\eta_t^1(x) \neq \eta_t^2(x) \right) + \hP_{A',\Omega}\left( \eta_t^1(x) \neq \eta_t^2(x) \right),
\end{align}
Furthermore, it holds that
\begin{align}
&\hP_{A,A'} \left(\eta_t^1(x) \neq \eta_t^2(x) \right) + \hP_{\Omega,A'}\left( \eta_t^1(x) \neq \eta_t^2(x) \right) 
\\ \leq &\sup_{A \in \mathcal{F}_{< 0}, A' \in \mathcal{F}_{\Lambda(\alpha s, s)}} \hP_{A,A'} \left(\eta_{s}^1(o) \neq \eta_{s}^2(o) \right)+  \sup_{A'' \in \mathcal{A}_{-\infty}^{-\alpha s}} \hP_{\Omega,A''}\left( \eta_0^1(o) \neq \eta_0^2(o) \right),
\end{align}
since the finite speed of propagation coupling is invariant with respect to translations of the conditioning and the argument.
 Thus, by the analysis above, we obtain that
\begin{align}
\left|\bP(B \mid A) - \bP(B) \right| \leq &\left( \sum_{\substack{(x,t) \in \Lambda(l) \\ \norm{(x,t)}_1 =\alpha s}} \sup_{A \in \mathcal{F}_{< 0}, A' \in \mathcal{F}_{\Lambda(\alpha s, s)}} \hP_{A,A'} \left( \eta_{s}^1(o) \neq \eta_{s}^2(o) \right) \right)\\  +  
&\left( \sum_{\substack{(x,t) \in \Lambda(l) \\ \norm{(x,t)}_1 =\alpha s}}  \sup_{A'' \in \mathcal{A}_{-\infty}^{-\alpha s}} \hP_{A'', \Omega} \left( \eta_0^1(x) \neq \eta_0^2(x) \right) \label{eq fsop last}
\right) .
\end{align}
To conclude the proof, we note that the number of site in $\bH$ at distance $\alpha s$ from the origin is of order $s^d$. Thus, due to \eqref{eq finite speed of propagation} the first sum on the r.h.s.\ of  \eqref{eq fsop last} converges towards $0$ as $l$ approaches $\infty$. Similarly, by applying \eqref{eq t to the d}, also the second sum on the r.h.s.\ of  \eqref{eq fsop last} converges towards $0$ as $l$ approaches $\infty$. From this we conclude the proof.
\end{proof}

\begin{proof}[Proof of Corollary \ref{thm:HMM-2}]
The proof of Corollary \ref{thm:HMM-2} follows along the lines of the proof of Theorem \ref{thm fsp}, by making use of the finite speed of propagation property and \eqref{eq hhm estimate}. First note that, for any $B \in \mathcal{F}_{\Lambda(l)}$ and $A \in \mathcal{A}_{-\infty}$, 
\begin{align}
\left|\bP \left(B \mid A\right) - \bP\left(B\right)\right| &= \left| \tP(\Phi (\xi) \in B \mid \xi \in \Phi^{-1}A ) - \tP ( \Phi(\xi) \in B) \right|
\\ &\leq \hP_{\Omega,\Phi^{-1}A}\left( \eta_t^1(x) \neq \eta_t^2(x) \text{ for some } (x,t) \in \Lambda(l) \right)
\\&\leq \sum_{(x,t) \in \Lambda(l)} \hP_{\Omega,\Phi^{-1}A}\left( \eta_t^1(x) \neq \eta_t^2(x) \right).
\end{align}
Thus, it suffices control  $\hP_{\Omega,\Phi^{-1}A}\left( \eta_t^1(x) \neq \eta_t^2(x) \right)$ for each $(x,t) \in \Lambda(l)$. and to show that the latter term above approaches $0$ as $l \rightarrow \infty$. For this, since $\Phi$ is assumed to be finite range, we note that the finite speed of propagation property $(\xi_t)$ transfers to events of the form $\Phi^{-1} A$. Thus, by considering a coupling $\tilde{\bP}_{\Phi^{-1} A,\Phi^{-1}A',\Omega}$, similar to the coupling in the  proof of Theorem \ref{thm fsp}, and where $A'\in \mathcal{A}_{-\infty}^{-s}$ and $A'\cap A=A$. Further, by applying the estimates  \eqref{eq finite speed of propagation} and  \eqref{eq hhm estimate}, we may proceed by the same line of reasoning as in the proof of Theorem \ref{thm fsp}, from which we conclude the proof.
\end{proof}

\subsubsection{Proof of Theorem \ref{thm markov 1}}

\begin{proof}[Proof of Theorem \ref{thm markov 1}]

Denote by $\mu^{EP}$ any limiting measure of the Cesaro means $\mu^{EP}_n := \frac{1}{n} \sum_{k=1}^n \bP_{\mu} (\eta_k^{EP} \in \cdot ) \in \mathcal{M}_1(\Omega_0)$ (by possibly taking subsequential limits) and note that $\mu^{EP}$ is invariant with respect to $(\eta_t^{EP})$. 

We first show that $\mu^{EP}$ agrees with $\mu$ on $\mathcal{F}_{=0}^{\infty}$.  Let $l \in \nat$ and consider any $B \in \mathcal{F}_{\Lambda_0(l)}$  with $\Lambda_0(l) := \{ (x,0) \colon \norm{(x,0)}_1 \geq l \}$. Similar to the proof of Theorem \ref{thm fsp}, it follows that, for any $A \in \mathcal{A}_{-\infty}$,
\begin{align}
\left|\bP(B \mid A) - \bP(B) \right| \leq &\left( \sum_{x \in \Lambda_0(l)} \sup_{A \in \mathcal{F}_{< 0}, A' \in \mathcal{F}_{\Lambda(\alpha s, s)}} \hP_{A,A'} \left( \eta_{\alpha s}^1(o) \neq \eta_{\alpha s}^2(o) \right) \right)\\  +  
&\left( \sum_{x \in \Lambda_0(l)}  \sup_{A'' \in \mathcal{A}_{-\infty}^{-\alpha s}} \hP_{A'', \Omega} \left( \eta_0^1(o) \neq \eta_0^2(o) \right) 
\right).
\end{align}
Since $\bP$ has finite speed of propagation, the first term converges to $0$ as $l$ approaches $\infty$.
For the second term, note that the number of sites in $\mathbb{Z}^{d}$ at distance $t$ from the origin is of order $t^{d-1}$. Thus, by \eqref{eq t to the d-1}, also the second term converges to $0$ as $l \rightarrow \infty$. This yields that  $\mu^{EP}$ agrees with $\mu$ on $\mathcal{F}_{=0}^{\infty}$, and that $\mu^{EP}$ and $\mu$ are non-singular on $(\Omega,\mathcal{F}_{\Lambda_0(l)})$ for all $l \in \nat$ sufficiently large. 

Next, assume in addition that $(X_t)$ is elliptic. By Lemma \ref{lem extended expansion} and an argument as in the proof of Theorem \ref{def assump2},  we conclude that $\mu$ and $\mu^{EP}$ are non-singular on $(\Omega,\mathcal{F}_{=0})$.  From this, we conclude that there is probability  measure $\hat{\mu}^{EP}$, invariant under $\mu$ and such that $\mu$ and $\hat{\mu}^{EP}$ are mutually absolutely continuous. This follows analogous to the proof of Theorem \ref{def assump2} by making use of (a slight adaptation of) Lemma \ref{lem thm 4.2 help} and the assumption that $(X_t)$ is elliptic. Consequently, the path measure of $(\eta_t^{EP})$ initialised from $\mu^{EP}$, denoted by $\bP^{EP} \in \mathcal{M}_1(\Omega,\mathcal{F})$, is mutually absolutely continuous to $\bP$ on $(\Omega, \mathcal{F}_{\geq0})$. Thus, since ellipticity implies ellipticity in the time direction, and since $\mu$ is ergodic under $(\eta_t)$ we conclude that $\mu^{EP}$ is ergodic under $(\eta_t^{EP})$, as follows similar to the proof of ergodicity in Theorem \ref{def assump1}. 

\end{proof}

\subsubsection{Proof of Theorem \ref{thm slow mixing} }

We next prove that the environments constructed in Subsection \ref{sec examples slow mixing} have arbitrary slow polynomial mixing.

\begin{proof}[Proof of Theorem \ref{thm slow mixing}]
First we will show the upper bound, by choosing a particular coupling.
The natural coupling of $\bP_\xi$ and $\bP_\sigma$ is that $\xi_t^1(x,n)$ and $\xi_t^2(x,n)$ share the resampling events of probability $b_n$, so that after the first resampling, the spins are identical. Note that this coupling can naturally be extended to an arbitrary number of initial configurations. If we denote by $\xi^\sigma_t$ the configuration at time $t$ when started in $\sigma$, we have under this coupling
\begin{align}
\xi^{-{\mathbf 1}}_t(x,n)\leq \xi^{\sigma}_t(x,n) \leq \xi^{+\mathbf 1}_t(x,n)
\end{align}
and hence $\eta^{-{\mathbf 1}}_t(x)\leq \eta^\sigma_t(x)\leq \eta^{+{\mathbf 1}}_t(x)$ for all $t,x,n,\sigma$. In particular it follows that
\begin{align}
\hP_{\xi,\sigma}(\eta^1_t(0)\neq\eta^2_t(0))\leq \hP_{\mathbf 1,-\mathbf 1}(\eta^1_t(0)=1)-\hP_{\mathbf 1,-\mathbf 1}(\eta^2_t(0)=1).
\end{align}
Let $R_t:=\{n\in\bN : \xi^1_t(0,n)= \xi^2_t(0,n) \}$. We have
\begin{align}\label{eq:layered-2}
\bP_{\pm\mathbf 1}(\eta_t(0)=1) &= \hP_{\mathbf 1,-\mathbf 1}\left(\sum_{n\in R_t} a_n \xi^1_t(0,n)> \mp \sum_{n\in R_t^c} a_n\right),
\end{align}
and hence,
\begin{align}\label{eq:layered-1}
\hP_{\xi,\sigma}(\eta^1_t(0)\neq\eta^2_t(0))&\leq \hE \int_{-\sum_{n\in R_t^c} a_n}^{\sum_{n\in R_t^c} a_n}f_{R_t}(x)dx = \hE \sum_{n\in R_t^c} a_n \int_{-1}^{1} f_{R_t}(\sum_{n\in R_t^c} a_n x)dx,
\end{align}
where $f_R$ is the density of $\sum_{n\in R}a_n Y_n$ and $(Y_n)_n$ are i.i.d. uniform $[-1,1]$ distributed. A simple convolution of the individual densities shows that $f_R\leq \min_{n\in R}(2a_n)^{-1}$, hence the above is bounded by
\begin{align}
&\hE \left((\min_{n\in R_t} a_n)^{-1} \sum_{n\in R_t^c}a_n \right) = \sum_{k\in\bN}\hP(\min R_t = k)a_k^{-1}\left(\sum_{n<k}a_n\right)\hE \left(\sum_{n>k} \ind_{n\in R_t^c} a_n\right)	\\
&= \sum_{k\in\bN}\left(\prod_{n<k}(1-b_n)^t\right)\left(1-(1-b_k)^t\right)a_k^{-1}\left(\sum_{n<k}a_n\right) \left(\sum_{n>k} a_n(1-b_n)^t\right). \label{eq:layered-3}
\end{align}
To obtain polynomial decay, we choose $a_n=\frac12 n^{-\alpha}$ and $b_n=\frac12 n^{-\beta}$. Then we can find some constant $C>0$ so that 
\[  \sum_{k\in\bN}\left(\prod_{n<k}(1-b_n)^t\right)\left(1-(1-b_k)^t\right)a_k^{-1}\left(\sum_{n<k}a_n\right)\leq C. \]
With this and $1-b_n\leq e^{-b_n}$,
\begin{align}
\eqref{eq:layered-3}\leq C \sum_{n\in\bN}a_n(1-b_n)^t &\leq \sum_{n\leq (t/\log t^2)^{\frac1\beta}}a_n e^{-b_n t} + \sum_{n> (t / \log t^2)^{\frac1\beta}}a_n e^{-b_n t}	\\
& \leq \sum_{n\leq (t /\log t^2)^{\frac1\beta}}a_n t^{-\log t} + \sum_{n> (t / \log t^2)^{\frac1\beta}}a_n 	\\
&\leq c_2 t^{\frac{-\alpha+1}{\beta}} (\log t)^{\frac{\alpha-1}{\beta}}.\label{eq:layered-4}
\end{align}

For a lower bound, we use \eqref{eq:layered-2} plus the fact that \eqref{eq:layered-1} is an equality for $\xi=+\mathbf 1$ and $\sigma=\mathbf{-1}$, so that we have 
\begin{align}
&\sup_{\xi,\sigma}\norm{\bP_\xi(\eta_t(0)\in\cdot) - \bP_{\sigma}(\eta_t(0)\in \cdot)}_{TV} = \hE \sum_{n\in R_t^c} a_n \int_{-1}^{1} f_{R_t}(\sum_{n\in R_t^c} a_n x)dx .
\end{align}
The density $f_R$ has a unique local maximum at 0 and its support is $[-\sum_{n\in R}a_n,\sum_{n\in R}a_n]$, so that we can lower bound the integral by replacing $f_{R_t}$ with $(2\sum_{n\in R_t}a_n)^{-1}$:
\begin{align}
 \eqref{eq:layered-4}&\geq \hE_{\mathbf 1, -\mathbf 1} ( \sum_{n\in R_t} a_n )^{-1} {\sum_{n\in R_t^c} a_n}  
 \geq ( \sum_{n\in \bN} a_n )^{-1} \sum_{n\in \bN} a_n (1-b_n)^t \\
 &\geq ( \sum_{n\in \bN} a_n )^{-1} \sum_{n \geq t^{-\beta}} a_n (1-\frac12 t^{-1})^t  \geq c_1 t^{\frac{-\alpha+1}{\beta}}. \qedhere
\end{align}
\end{proof}

\subsubsection{Proof of Theorem \ref{lemma:OU}}

We continue with the proof of Theorem \ref{lemma:OU} and study random walks on an Ornstein-Uhlenbeck process.

\begin{proof}[Proof of Theorem \ref{lemma:OU}]
Fix $n$, a sequence $t_k$ and $a\in \{-1,1\}^n$. Define the additional events $A_1=\{\sign(\xi_{-t_k})=1, 1\leq k\leq n\}$ and $\overline{A}=\{\sign(\xi_{t})=1, t\leq 0\}$.

We will use the following sequence of stochastic domination:
\begin{align}\label{eq:stochastic-dom}
\bP(\xi_0 \in \cdot \;|\; A) \preccurlyeq \bP(\xi_0 \in \cdot \;|\; A_1) \preccurlyeq \bP(\xi_0 \in \cdot \;|\; \overline{A}).
\end{align}
Here $\bP(\xi_0 \in \cdot \;|\; \overline{A})$ is the limit of $ \bP(\xi_0 \in \cdot \;|\;\sign(\xi_{s})=1, -T\leq s\leq 0)$ as $T\to\infty$, which exists and has Lebesgue-density $x\exp(-\frac12 x^2)$ on $[0,\infty)$ (see \cite{MandlSpectralTheory1961}).
The argument for the stochastic domination in \eqref{eq:stochastic-dom} is based on the following fact: Let $Y^1$ and $Y^2$ be two diffusions given by $dY^i_t=b^i_t(Y^i_t)dt+\sigma dW_t$. If $b_t^1\leq b_t^2$ and $\cL(Y^1_0)\preccurlyeq \cL(Y_0^2)$, then 
\begin{align}\label{eq:diffusion-domination}
\cL(Y^1_t)\preccurlyeq \cL(Y_t^2)\qquad \forall\ t\geq0.
\end{align}

To apply this to the first stochastic domination in \eqref{eq:stochastic-dom} holds, 
let $-t_l$ is the biggest time point with $a_l=-1$. Clearly $\bP(\xi_{-t_l}\in\cdot|A) \preccurlyeq \bP(\xi_{-t_l}\in\cdot|A_1)$. Furthermore, after $-t_l$ the events $A$ and $A_1$ agree past $-t_l$, that means that after $t_l$ we condition on the same event. This conditioning changes the drift to some new and time-inhomogeneous drift, for which only the initial law varies, and by \eqref{eq:diffusion-domination} we obtain the stochastic domination.

For the second stochastic domination, we use \eqref{eq:diffusion-domination} and the fact that conditioning the Ornstein-Uhlenbeck process on $\overline{A}$ further increases the drift compared to condition on $A_1$ (with the convention that the drift is $+\infty$ for $x\leq0$ when conditioning on $\overline{A}$).

An analogous bound to \eqref{eq:stochastic-dom} holds in the other direction when we condition the process to be negative, and $\bP(\xi_0\in\cdot|\underline{A})=\bP(-\xi_0\in\cdot|\overline{A})$. Together this implies
\begin{align}\label{eq:OU-1}
\bP(\abs{\xi_0} \in \cdot \;|\; A) \preccurlyeq \bP(\xi_0 \in \cdot \;|\; \overline{A}).
\end{align}

A bound on the total variation is then given by a coupling:
\begin{align}
\norm{\bP(\xi_t \in \cdot \;|\; A) - \bP(\xi_t\in \cdot)}_{TV} \leq \int \hP_{x,y}(\tau>t) \pi_A(dx,dy),
\end{align}
where $\hP_{x,y}$ is a coupling of two OU-processes $\xi^1_t$ and $\xi^2_t$ starting in $x$ and $y$ and $\pi_A$ is any coupling of $\bP(\xi_0 \in \cdot \;|\; A)$ with a normal distribution, and $\tau$ is the coupling time.

We take $\hP_{x,y}$ to be the coupling where the driving Brownian motions are perfectly negatively correlated until the processes are coupled. Then the difference $D_t$ is an OU-process satisfying
\begin{align}
dD_t = -D_tdt + 2dW_t \quad \text{and}\quad D_0=x-y.
\end{align}
The coupling time $\tau$ is $\tau_0$, the first hitting time of 0 of $D_t$. Note that the coupling time increases if $\abs{x-y}$ increases, in particular when we replace $\abs{x-y}$ by $\abs{x}+\abs{y}$. With this fact, choosing $\pi_A$ to be the independent coupling, and \eqref{eq:OU-1} we get
\begin{align}
\int \hP_{x,y}(\tau>t) \pi_A(dx,dy) \leq \int_0^\infty \int_0^\infty \bP_{x+y}(\tau_0>t) xe^{-\frac{x^2}{2}} \frac{2}{\sqrt{2\pi}} e^{-\frac{y^2}{2}} dx dy.
\end{align}
To conclude the proof we use the fact that that $\bP_{x+y}(\tau_0>r+\log(x+y))$ is exponentially small in $r$.

The claim that this example satisfies the conditions of Theorem \ref{def assump2} is now a simple computation by telescoping over all sites in $B$ and using the fact that the last time a site $x\in\bZ^d$ could be observed is $-|x|/R$, where $R$ is the interaction range of the jump kernel $\alpha$.
\end{proof}

\subsubsection{Proof of Theorem \ref{thm EP Contact Process} }\label{sec proofs contact}

In this subsection we present the proof of Theorem \ref{thm EP Contact Process}. 
Before doing so, we first introduce some definitions and prove a general theorem, Theorem \ref{thm lln FKG}, from which Theorem \ref{thm EP Contact Process} follows.

Let $E=\{0,1\}$ and associate to the space $\Omega$ the partial ordering such that $\xi \leq \eta$  if and only if $\xi(x) \leq \eta(x)$ for all $x\in \mathbb{Z}^{d+1}$. An event $B \in \mathcal{F}$ is said to be \emph{increasing} if $\xi\leq\eta$ implies $1_{B}(\xi)\leq 1_{B}(\eta)$. If $\xi\leq\eta$ implies $1_{B}(\xi)\geq 1_{B}(\eta)$ then $B$ is called \emph{decreasing}.
For $\bP, \bQ \in \mathcal{M}_1(\Omega)$, we say that $\bP$ \emph{stochastically dominates} $\bQ$ if $\bQ(B) \leq \bP(B)$ for all $B\in \mathcal{F}$ increasing.
Furthermore, a measure $\bP \in \mathcal{M}_1(\Omega)$ is \emph{positively associated} if it satisfies $\bP(B_1 \cap B_2) \geq \bP(B_1) \bP(B_2)$ for any two increasing events $B_1,B_2 \in \mathcal{F}$. 
Following \cite{LiggettSteifSD2006}, we say that $\bP$ is \emph{downward FKG} if,  for every finite $\Lambda \subset \mathbb{Z}^{d+1}$, the measure $\bP(\cdot \mid \eta \equiv 0 \text{ on } \Lambda )$ is positively associated. 

\begin{thm}\label{thm lln FKG}
Let $\bP \in \mathcal{M}_1(\Omega)$ be downward FKG and assume that there exists $\phi \colon \nat \rightarrow [0,1]$ such that for all $ (x,s) \in \Lambda(l)$ and all $\gamma \in \bigcup_{k \geq 1} \Gamma_k$,
\begin{align}\label{eq supercritical}
\bP \left(\eta_s(x) =1 \mid \eta \equiv 0 \text{ along } \gamma \right) \geq \bP \left(\eta_0(o)=1 \right) - \phi(l), 
\\ \label{eq assump positive}
\bP( \eta_s(x) =1 \mid \eta \equiv 1 \text{ along } \gamma ) \leq \bP(\eta_0(o) =1) + \phi(l). \end{align}
If $\sum_{l \geq 1} l^d \phi(l) < \infty$, then the conditions of Theorem \ref{def assump2} are satisfied.
\end{thm}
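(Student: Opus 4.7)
The plan is to derive the mixing bound \eqref{eq bcm} by converting the pointwise hypotheses \eqref{eq supercritical}--\eqref{eq assump positive} into two-sided single-site control for arbitrary observations $A\in\mathcal{A}_{-\infty}$, and then packaging these marginal bounds into a site-by-site coupling between $\bP$ and $\bP(\cdot\mid A)$. Summing disagreement probabilities over $\Lambda(l)$ by a volume-versus-decay union bound produces a quantity of order $\sum_{r\geq l}r^{d}\phi(r)$, which vanishes as $l\to\infty$ precisely under the summability assumption.

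First I would set $p:=\bP(\eta_{0}(o)=1)$ and, for an observation along a path $\gamma$, decompose $A=A^{+}\cap A^{-}$, where $A^{+}=\{\eta\equiv 1\text{ on }S_{1}\}$ is increasing and $A^{-}=\{\eta\equiv 0\text{ on }S_{0}\}$ is decreasing, with $S_{0},S_{1}$ the sites where $\sigma$ takes the values $0,1$. Write $\bar A^{0}$ and $\bar A^{1}$ for the extremal all-zero and all-one observations. Since downward FKG applied to the empty set yields positive association of $\bP$, a one-line application of FKG to the increasing event $\{\eta_{s}(x)=1\}$ against decreasing $\bar A^{0}$ and increasing $\bar A^{1}$ gives the inner inequalities in
\begin{align*}
p-\phi(l)\;\leq\;\bP(\eta_{s}(x)=1\mid\bar A^{0})\;\leq\;p\;\leq\;\bP(\eta_{s}(x)=1\mid\bar A^{1})\;\leq\;p+\phi(l),
\end{align*}
while the outer inequalities are the hypotheses, uniformly for $(x,s)\in\Lambda(l)$.

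Next I would extend these estimates to mixed $A$ by a stochastic-ordering sandwich. Downward FKG applied to $S_{0}$ makes $\mu_{A^{-}}:=\bP(\cdot\mid A^{-})$ positively associated; since $\bar A^{0}=A^{-}\cap\{\eta\equiv 0\text{ on }S_{1}\}$ is $\mu_{A^{-}}$ conditioned on a decreasing event and $A=A^{-}\cap A^{+}$ is $\mu_{A^{-}}$ conditioned on an increasing event, the standard FKG-conditioning inequalities yield
\begin{align*}
\bP(\cdot\mid\bar A^{0})\;\preccurlyeq\;\mu_{A^{-}}\;\preccurlyeq\;\bP(\cdot\mid A),
\end{align*}
and restricting to $\{\eta_{s}(x)=1\}$ gives the lower bound $\bP(\eta_{s}(x)=1\mid A)\geq p-\phi(l)$. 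For the matching upper bound $\bP(\eta_{s}(x)=1\mid A)\leq p+\phi(l)$ I would telescope the one-constraints in $A^{+}$ into those of $\bar A^{1}$ one site at a time, invoking at each step the downward-FKG stochastic order (positive association of $\bP(\cdot\mid\{\eta\equiv 0\text{ on }\Lambda\})$ for every finite $\Lambda$) together with \eqref{eq assump positive} as the terminal estimate.

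Finally I would translate the two-sided marginal bounds into a monotone coupling $\hP_{A,\Omega}$ of $\bP(\cdot\mid A)$ and $\bP$ with
\begin{align*}
\hP_{A,\Omega}\bigl(\eta^{1}(x,s)\neq\eta^{2}(x,s)\bigr)\;\leq\;2\phi(\|(x,s)\|_{1}),\qquad (x,s)\in\bH,
\end{align*}
via Strassen's theorem and positive association, and apply the TV-by-coupling bound together with a union bound: for any cylinder $B\in\mathcal{F}_{\Lambda(l)}$ with support $F$,
\begin{align*}
|\bP(B\mid A)-\bP(B)|\;\leq\;\sum_{(x,s)\in F}\hP_{A,\Omega}\bigl(\eta^{1}(x,s)\neq\eta^{2}(x,s)\bigr)\;\leq\;C\sum_{r\geq l}r^{d}\phi(r),
\end{align*}
uniformly in $B$ and $A$; the right-hand side vanishes as $l\to\infty$ under the summability assumption, and a monotone-class argument extends the bound to all of $\mathcal{F}_{\Lambda(l)}$, verifying \eqref{eq bcm}.

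The principal obstacle is the upper half of the single-site sandwich for mixed $A$: downward FKG does not supply positive association of $\bP(\cdot\mid A^{+})$, and for a general positively associated measure the naive comparison $\bP(\cdot\mid C)\preccurlyeq\bP(\cdot\mid D)$ for increasing events $D\subset C$ already fails in two-site examples. Hence the planned telescoping must exploit the full downward-FKG structure at each step, not merely positive association of $\bP$. Everything else—the FKG lower bound, the Strassen coupling, and the $r^{d}$-union bound—is essentially routine once the marginal estimates are in place.
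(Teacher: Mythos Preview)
Your single-site sandwich and the final union bound are on target, but the coupling step contains a genuine gap. Knowing that the one-dimensional marginals of $\bP(\cdot\mid A)$ and $\bP$ both lie in $[p-\phi,p+\phi]$ does \emph{not} produce a coupling $\hP_{A,\Omega}$ with $\hP_{A,\Omega}(\eta^{1}_{s}(x)\neq\eta^{2}_{s}(x))\leq 2\phi$: close marginals say nothing about joint structure, and in general there is no stochastic order between $\bP(\cdot\mid A)$ and $\bP$, so Strassen cannot be invoked directly. (For instance, $A=\bar A^{1}$ gives $\bP(\cdot\mid A)\succcurlyeq\bP$ while $A=\bar A^{0}$ gives the reverse.) ``Positive association'' of either measure does not repair this.

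The paper's device is a triangle inequality with the all-zero conditional as pivot: bound
\[
\bigl|\bP(B\mid A)-\bP(B)\bigr|\;\le\;\bigl|\bP(B\mid A)-\bP(B\mid\bar A^{0})\bigr|+\bigl|\bP(B)-\bP(B\mid\bar A^{0})\bigr|.
\]
Downward FKG gives $\bP(\cdot\mid\bar A^{0})\preccurlyeq\bP(\cdot\mid A)$ and $\bP(\cdot\mid\bar A^{0})\preccurlyeq\bP$ (the first via your own chain $\bP(\cdot\mid\bar A^{0})\preccurlyeq\mu_{A^{-}}\preccurlyeq\bP(\cdot\mid A)$), so Strassen furnishes a monotone coupling for each pair. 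In a monotone coupling the single-site disagreement is \emph{exactly} the difference of single-site marginals, which is what turns marginal control into coupling control. For the pair $(\bP(\cdot\mid\bar A^{0}),\bP)$ this difference is $p-\bP(\eta_{s}(x)=1\mid\bar A^{0})\le\phi(l)$ directly from \eqref{eq supercritical}. For the pair $(\bP(\cdot\mid\bar A^{0}),\bP(\cdot\mid A))$ it is $\bP(\eta_{s}(x)=1\mid A)-\bP(\eta_{s}(x)=1\mid\bar A^{0})$, which the paper bounds by $\bP(\eta_{s}(x)=1\mid\bar A^{1})-\bP(\eta_{s}(x)=1\mid\bar A^{0})\le 2\phi(l)$ via \eqref{eq supercritical}--\eqref{eq assump positive}. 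The remaining union bound over $\Lambda(l)$ is exactly your $\sum_{r\ge l}r^{d}\phi(r)$ estimate.

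Note that the step you flag as the principal obstacle, namely $\bP(\eta_{s}(x)=1\mid A)\le\bP(\eta_{s}(x)=1\mid\bar A^{1})$, is also used in the paper and is attributed there to downward FKG; so your diagnosis of where the real work lies is accurate, but the pivot through $\bar A^{0}$ is the missing structural idea that makes the coupling argument go through.
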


\begin{rem}
In the above theorem, and throughout this section, we write ``$\eta \equiv i$ along $\gamma$'', where $i\in \{0,1\}$ and $\gamma \in \Gamma := \bigcup_{k \geq 1} \Gamma_k$, for the event that $\{ \eta_s(x)=i \: \forall \: (x,s) \in \gamma +[-R,R]^d\times\{0\} \}$. 
\end{rem}

\begin{proof}[Proof of Theorem \ref{thm lln FKG}]
Let $B \in \mathcal{F}$. For any $k \in \nat$, we have similar to the proof of Lemma \ref{lem exp main}  that
\begin{align}
&\left| P_{\bP}\left(\eta_k^{EP} \in B\right) - \bP\left(B\right) \right|
\\ \leq & \sum_{\gamma  \in \Gamma_k} \sum_{A_{-k}^{-1} \in \mathcal{A}_{-k}^{-1} (\gamma)}\bigg[ \left| \bP\left(B \mid A_{-k}^{-1} \right)  - \bP\left(B \mid \eta \equiv 0 \text{ along } \gamma \right)\right| 
\\ + & \left| \bP\left(B \right)  - \bP\left(B \mid \eta \equiv 0 \text{ along } \gamma \right)\right| \bigg] \bP^{-k} \left(X_{-k,\dots,0} = \gamma, A_{-k}^{-1} \right).
\end{align}
We next show that, under \eqref{eq supercritical} and \eqref{eq assump positive},  
\begin{align}
\sup_{B\in \Lambda(l)} \left| \bP\left(B \mid A_{-k}^{-1} \right)  - \bP\left(B \mid \eta \equiv 0 \text{ along } \gamma \right)\right| \rightarrow 0,\quad  \text{as } l \rightarrow \infty.
\end{align}

Fix $\gamma \in \Gamma_k$ and $A_{-k}^{-1} \in \mathcal{A}_{-k}^{-1}(\gamma)$. Since $\bP$ is downward FKG, it is the case that $\bP(\cdot \mid \eta \equiv 0 \text{ along } \gamma)$ is stochastically dominated by $\bP( \cdot \mid A_{-k}^{-1})$. Hence, by Strassens Theorem, there exists a coupling $\hP_{0,1}$ of $\bP(\cdot \mid \eta \equiv 0 \text{ along } \gamma)$ and $\bP( \cdot \mid A_{-k}^{-1})$ such that 
$\hP_{0,1} \left(\eta^1 \leq \eta^2 \right) =1$.
We moreover have that, for all $B \in \mathcal{F}_{\Lambda(l)}$, $l \in \nat$, 
\begin{align}
& |\bP(B \mid \eta \equiv 0 \text{ along } \gamma) -\bP( B \mid A_{-k}^{-1} )|  
\\ \leq &\hP_{0,1} \left( \eta^1 \neq \eta^2 \text{ on } \Lambda(l) \right)
 \\ \leq &\sum_{ (x,s) \in \Lambda(l)} \hP_{0,1} \left( \eta^1_s(x) \neq \eta^2_s(x) \right) 
\\ =  &\sum_{ (x,s) \in \Lambda(l)} \hP_{0,1} \left( \eta^1_s(x)=0, \eta^2_s(x)=1 \right) 
\\  =  &\sum_{ (x,s) \in \Lambda(l)} \left( \hP_{0,1} ( \eta^1_s(x)=0) - \hP_{0,1} ( \eta^2_s(x)=0) \right).
\end{align}
Furthermore, since $\bP$ is downward FKG,  we know that 
\begin{align}
&\hP_{0,1} \left( \eta^1_s(x)=0 \right) - \hP_{0,1} \left( \eta^2_s(x)=0\right)
  \\ = &\bP \left( \eta_s(x) = 0 \mid \eta \equiv 0 \text{ along } \gamma \right) - \bP\left(\eta_s(x) =0 \mid A_{-k}^{-1} \right) 
 \\ \leq &\bP \left( \eta_s(x) = 0 \mid \eta \equiv 0 \text{ along } \gamma \right) - \bP\left(\eta_s(x) =0 \mid \eta \equiv 1 \text{ along } \gamma \right) 
\end{align}
As a consequence, by using \eqref{eq supercritical} and \eqref{eq assump positive}, we obtain by the derivations above that
\begin{align}\label{eq dfkg help100}
\sup_{B \in \mathcal{F}_{\Lambda(l)} } |\bP(B \mid \eta \equiv 0 \text{ along } \gamma) -\bP( B \mid A_{-k}^{-1} )|  \leq C \sum_{t\geq l} t^d \phi(t),
\end{align}
 for some constant $C\in (0,\infty)$. By a word by word adaptation of this argument, replacing $\bP\left(B \mid A_{-k}^{-1} \right)$ by $\bP\left(B \right)$, it can similarly be shown that
\begin{align}\label{eq dfkg help101}
\sup_{B\in \Lambda(l)}  \left| \bP\left(B \right)  - \bP\left(B \mid \eta \equiv 0 \text{ along } \gamma \right) \right| \leq C\sum_{t\geq l} t^d \phi(t).
\end{align}
 Substituting the estimates from \eqref{eq dfkg help100} and \eqref{eq dfkg help101} into the first inequality of this proof, and using that $\lim_{l \rightarrow \infty} \sum_{t\geq l} t^d \phi(t) =0$, we obtain  that the conditions of Theorem  \ref{def assump2} are satisfied. 
\end{proof}

We continue with the proof of Theorem \ref{thm EP Contact Process}.

\begin{proof}[Proof of Theorem \ref{thm EP Contact Process}]
Let $(\xi_t)$ be the contact process on $\mathbb{Z}^{\tilde{d}}$ with $\tilde{d}\geq1$ and $\lambda> \lambda_c(\tilde{d})$. This process is known to satisfy the downward FKG property, as shown by \cite{BergHaggstromKahn2006}, Theorem 3.3 (see also Lemma 2.1 in \cite{BergBethuelsenCP2016}). Thus, for the proof of Theorem \ref{thm EP Contact Process}, it is sufficient to show that  \eqref{eq supercritical}  and \eqref{eq assump positive} holds. In fact, it is sufficient to show  that the estimates of Theorem \ref{thm lln FKG} hold for sites $(o,s)$ with $s\in \integers_{\geq0}$. To see this, recall the graphical representation of the contact process (see p.\ 32-34 in \cite{LiggettSIS1999}). Since the spread of information is bounded by a Poisson process with rate $2d\lambda$, it is evident that the finite speed of propagation property holds, and thus that Corollary \ref{thm:HMM-2} applies.

That \eqref{eq assump positive} holds for the contact process is now a simple application of the graphical representation and the fact that the contact process started from all sites equal to $1$ converges exponentially fast towards the upper invariant measure. See \cite{LiggettSIS1999}, Theorem 1.2.30, and the remark directly after for estimates of the latter. In particular, \eqref{eq assump positive}  holds with $\phi(l)$ exponentially decaying in $l$. Note that, this estimate holds for $(\xi_t)$, that is, without applying the projection map.

In order to conclude a similar estimate for \eqref{eq supercritical}, on the other hand, we restrict to the projection of $(\xi_t)$ onto the one dimensional lattice. In this case, \eqref{eq supercritical}, again with $\phi(l)$ exponentially decaying in $l$, is a direct application of  \cite{BergBethuelsenCP2016}, Theorem 1.7. Thus, by Theorem \ref{thm lln FKG}, we conclude that the conditions of Theorem \ref{def assump2} are satisfied.
\end{proof}

\begin{rem}
The statement of Theorem \ref{thm EP Contact Process} can be extended to projection maps from $\mathbb{Z}^{\tilde{d}}$ to $\mathbb{Z}_{\tilde{d}-1}^{\tilde{d}}:= \integers^{\tilde{d}-1} \times \{0\}$ for any $\tilde{d}\geq 2$ and $\lambda>\lambda_c(\tilde{d})$. Indeed, Theorem 1.7 in \cite{BergBethuelsenCP2016} still holds in this generality.
\end{rem}

\subsection{Proof of Theorem \ref{thm continuity} and Theorem \ref{thm continuity2}}

\begin{proof}[Proof of Theorem \ref{thm continuity}]
We first show continuity with respect to $\left(\bP^{EP(n)}\right)$. Let $\epsilon>0$, and let $m \leq n$ with $n,m \in \nat$. For $\Lambda \subset \Lat$ finite and $B\in \mathcal{F}_{\Lambda}$ we have that, for every $t \in \nat$,
\begin{align}
|\bP^{EP(m)}(B) - \bP^{EP(n)}(B)| &\leq  |\bP^{EP(m)}(B) - P_{\bP_m}(\eta_t^{EP(m)}\in B)|
\\ &+  |\bP^{EP(n)}(B) - P_{\bP_n}(\eta_t^{EP(n)}\in B)|
\\ &+ | P_{\bP_n}(\eta_t^{EP(n)}\in B) - P_{\bP_m}(\eta_t^{EP(m)}\in B)|.
\end{align}
By Assumption c) we can fix $t$ such that the sum of the first two terms is less than $\epsilon/2$. By the uniformity assumption, this bound holds irrespectively of $m$ and $n$. It thus remains to show that also the third term can be made smaller than $\epsilon/2$ by possibly taking $m$ large.
To this end, we use the expansion in \eqref{eq expansion 2.1}, and note that 
\begin{align}
&P_{\bP_m}(\eta_t^{EP(m)}\in B) 
\\ = &\sum_{\substack{ \gamma \in \Gamma_t \\ A_{-t}^{-1} \in \mathcal{A}_{-t}^{-1}(\gamma) }} \bP_m \left( B,  A_{-t}^{-1} \right) P_m \left(X_{-t,\dots,0}=\gamma \mid A_{-t}^{-1} \right)
\\ = &\sum_{\substack{ \gamma \in \Gamma_t \\ A_{-t}^{-1} \in \mathcal{A}_{-t}^{-1}(\gamma)} } \left( \bP_n ( B,  A_{-t}^{-1} ) \pm \delta_{1,m}(t) \right) \left(P_n (X_{-t,\dots,0}=\gamma \mid A_{-t}^{-1} ) \pm \delta_{2,m}(t) \right),
\end{align}
where, due to a) and b), the error terms $\delta_{1,m}(t)$ and $ \delta_{2,m}(t) $ approaches $0$ as $m \rightarrow \infty$. 
In particular, again since
\begin{align}
P_{\bP_n}(\eta_t^{EP(n)}\in B) = \sum_{\substack{ \gamma \in \Gamma_t \\ A_{-t}^{-1} \in \mathcal{A}_{-t}^{-1}(\gamma) }} \bP_n \left( B,  A_{-t}^{-1} \right) P_n \left(X_{-t,\dots,0}=\gamma \mid A_{-t}^{-1} \right),
\end{align}
by taking $m$ large enough we can guarantee that
\begin{align}
| P_{\bP_n}(\eta_t^{EP(n)}\in B) - P_{\bP_m}(\eta_t^{EP(n)}\in B )|<\epsilon/2.
\end{align} Since this bound holds for all $n\geq m$ it follows that $(\bP^{EP(m)}(B))$ is a Cauchy sequence and hence converges to a limit. Moreover, since $B$ and $\Lambda$ were arbitrary, this is true for any local local event $B \in \mathcal{F}$. This implies that $\bP^{EP(m)}$ converges weakly to $\bP^{EP}$ for some $\bP^{EP} \in \mathcal{M}_1(\Omega)$.

We next proceed with the proof of $P_{\bP}(\eta_t^{EP} \cdot ) \implies \bP^{EP}$, where $\bP^{EP}$ is the limiting measure above. 
Let $\epsilon>0$ and $B\in \mathcal{F}$ local. For any $n \in \nat$, we have that
\begin{align}
|\bP^{EP}(B) - P_{\bP}(\eta_t^{EP} \in B)| &\leq |\bP^{EP}(B) - \bP^{EP(n)}(B)|
\\ &+ |\bP^{EP(n)}(B) - P_{\bP_n}(\eta_t^{EP(n)}\in B)|
\\ &+|P_{\bP_n}(\eta_t^{EP(n)}\in B) - P_{\bP}(\eta_t^{EP} \in B) |. 
\end{align}
Fix $t $ such that the second term is smaller than $\epsilon/3$. This we can do by applying Assumption c). Next, by taking $n$ large the first term can be made smaller then $\epsilon/3$ as well since $\bP^{EP(n)} \implies \bP^{EP}$, as we have shown above. For the third term we can proceed as in for the proof of $\bP^{EP(n)} \implies \bP^{EP}$ above. Indeed, since $t$ is fixed, we can use that $\bP_n \implies \bP$ and that  $\epsilon(n) \downarrow 0$ together with the finite range assumption of the random walk. Hence we may take $n$ so large that also the third term is less that $\epsilon/3$. Since $\epsilon>0$ was taken arbitrary, this shows that $P_{\bP}(\eta_t^{EP} \in B) \rightarrow \bP^{EP}(B)$ as $t \rightarrow \infty$. Since $B\in \mathcal{F}$ was an arbitrary local event, we conclude that $P_{\bP}(\eta_t^{EP} \in \cdot)$ converges weakly towards $\bP^{EP}(\cdot)$. As a necessary consequence, it also follows that by standard arguments that  $\bP^{EP}$ is invariant with respect to $(\eta_t^{EP})$.

\end{proof}

\begin{proof}[Proof of Theorem \ref{thm continuity2}]
Let $\bP_{\sigma}$ be the path measure of $(\eta_t)$ when started from $\sigma \in \Omega_0$ and assume that  $(\eta_t^{EP})$  is uniquely ergodic with invariant measure $\mu^{EP} \in \mathcal{M}_1(\Omega)$. 
We have that, for any $B \in \mathcal{F}_{\Lambda}$, $\Lambda \subset \mathbb{Z}^{d} \times \{0\}$ finite, and any $t\in \nat$,
\begin{equation}
\begin{aligned}\label{eq uniq ergodic 2}
\left| \mu^{EP}(B) - \mu^{EP(n)}(B)\right|
 = &\left| t^{-1} \sum_{k=1}^t  \left[P_{\mu^{EP}}(\eta_k^{EP}\in B) - P_{\mu^{EP(n)}}(\eta_k^{EP(n)} \in B) \right]\right|
\\ \leq &\left| t^{-1} \sum_{k=1}^t  \left[P_{\mu^{EP}}(\eta_k^{EP}\in B) - P_{\mu^{EP(n)}}(\eta_k^{EP(n)} \in B) \right] \right|
\\ + &\left| t^{-1} \sum_{k=1}^t  \left[P_{\mu^{EP(n)}}(\eta_k^{EP}\in B) - P_{\mu^{EP(n)}}(\eta_k^{EP(n)} \in B) \right] \right|.
\end{aligned}
\end{equation}
Since $(\eta_t^{EP})$ is uniquely ergodic, 
it follows by classical arguments that
\begin{align}
\sup_{\sigma,\omega \in \Omega_0} \left| t^{-1} \sum_{k=1}^t  \left[ P_{\sigma}(\eta_k^{EP}\in B) - P_{\omega}(\eta_k^{EP} \in B) \right] \right| \rightarrow 0
 \end{align}
as $t$ approaches $\infty$  (see e.g.\ Theorem 4.10 in \cite{EinsiedlerWardErgodic2011}). Hence, by taking $t$ large we can assure that the first term of the r.h.s.\ of \eqref{eq uniq ergodic 2} is less than $\epsilon/2$. Next, for the second term, we have that, for any fixed $t>0$,
\begin{align} 
t^{-1} \sum_{k=1}^t  \left| P_{\mu^{EP(n)}}(\eta_k^{EP}\in B) - P_{\mu^{EP(n)}}(\eta_k^{EP(n)} \in B)\right|\rightarrow 0 \text{ as } n \rightarrow \infty.
\end{align}
This follows similarly as in the proof of Theorem \ref{thm continuity}. Indeed, for each $k \in \{1, \dots,t\}$, we have that
\begin{align}
&P_{\mu^{EP(n)}}(\eta_k^{EP}\in B) 
\\ = &\sum_{\substack{ \gamma \in \Gamma_k \\ A_{-k}^{-1} \in \mathcal{A}_{-k}^{-1}(\gamma) }} \bP_{\mu^{EP(n)}} \left( B,  A_{-k}^{-1} \right) P_n \left(X_{-k,\dots,0}=\gamma \mid A_{-k}^{-1} \right)
\\ = &\sum_{\substack{\gamma \in \Gamma_k \\ A_{-k}^{-1} \in \mathcal{A}_{-k}^{-1}(\gamma) }} \left( \bP_{\mu^{EP}} ( B,  A_{-t}^{-1} ) \pm \delta_{1,n}(t) \right)
\\ &\quad \quad \quad \quad \quad \quad \quad \cdot \left(P (X_{-k,\dots,0}=\gamma \mid A_{-k}^{-1})  \pm\delta_{2,n}(t) \right),
\end{align}
where both the error terms $\delta_{1,n}(t)$ and $ \delta_{2,n}(t) $ approaches $0$ as $n \rightarrow \infty$. Thus, by taking $n$ sufficiently large we can assure that the second term on the r.h.s. of \eqref{eq uniq ergodic 2} is less than $\epsilon/2$. From this we conclude that $\left| \mu^{EP}(B) - \mu^{EP(n)}(B)\right|<\epsilon$ for all $n$ large. Since $B$ and $\Lambda$ were arbitrary chosen, we hence conclude the proof.
\end{proof}

 \subsection{Proof of Theorem \ref{thm alternative EP}}
 \begin{proof}[Proof of Theorem \ref{thm alternative EP}]

The main part of the proof goes along the same lines as the proof of Theorem \ref{def assump1}. The main difference is an estimate which is similar to Lemma \ref{lem exp main} and which we present next. Let $B \in \mathcal{F}_{\geq0}$. We have that, for any $k \in \nat$,
\begin{align}
&\big| P_{\bP}\left(\eta_k^{EP} \in B\right) - \bP\left(B\right) \big|
\\= &\big| \sum_{\gamma  \in \Gamma_k} \sum_{A_{-k}^{-1} \in \mathcal{A}_{-k}^{-1} (\gamma)} \bP\left(B, A_{-k}^{-1} \right) P \left(X_{-k,\dots,0} = \gamma\mid A_{-k}^{-1} \right) - \bP\left(B\right)\big|
 \\ \leq & \sum_{\gamma  \in \Gamma_k} \sum_{A_{-k}^{-1} \in \mathcal{A}_{-k}^{-1} (\gamma)} \big| \bP\left(B \mid A_{-k}^{-1} \right) - \bP\left(B\right)\big| \bP^{-k} \left(X_{-k,\dots,0} = \gamma, A_{-k}^{-1} \right), 
\end{align}
where in  the last equality we used the fact that,
\begin{align}
\sum_{\gamma  \in \Gamma_k}  \sum_{A_{-k}^{-1} \in \mathcal{A}_{-k}^{-1} (\gamma)}  \bP^{-k} \left(X_{-k,\dots,0} = \gamma, A_{-k}^{-1}(\gamma,\sigma) \right) =1.
\end{align}
Consequently, by the bound in \eqref{eq thm alternative EP}, we conclude that,  for any $k \in \nat$,
\begin{align}\label{eq thm alternative help}
\big| P_{\bP}\left(\eta_k^{EP} \in B\right) - \bP\left(B\right) \big| \leq M_1 \bP(B), \quad \forall \: B \in \mathcal{F}_{\geq0}.
\end{align}
In particular, $P_{\bP}\left(\eta_k^{EP} \in \cdot \right) \ll \bP$ on $\mathcal{F}_{\geq0}$ and $\frac{ dP_{\bP}\left(\eta_k^{EP} \in\cdot\right)|_{\mathcal{F}_{\geq0}}}{d\bP(\cdot)|_{\mathcal{F}_{\geq0}}} \leq M_1$.

Let $\bQ \in \mathcal{M}_1(\Omega)$ be a limiting measure of the sequence $\left(t^{-1} \sum_{k=1}^tP_{\bP}(\eta_k^{EP} \in \cdot)\right)_{t >0}$, by possibly taking sub-sequential limits. Then, by means of weak convergence,  since the space of $M_1$-bounded functions on a compact space form a compact space, and the limit of  bounded measurable functions is measurable, \eqref{eq thm alternative help} immediately transfers to $\bQ$. Consequently, we have $\bQ \ll \bP$ on $\mathcal{F}_{\geq0}$ and $\frac{ d\bQ|_{\mathcal{F}_{\geq0}}}{d\bP|_{\mathcal{F}_{\geq0}}} \leq M_1$. This concludes the first part.

Next, assume that \eqref{eq thm alternative EP2} holds from which it follows that, for every $B \in \mathcal{F}_{\geq0}$,
\begin{align}
\left| \bP(B) - \bP(B \mid A_k) \right| \leq M_2 \bP(B\mid A_k), \quad \forall \: A_k \in \mathcal{A}_{-kl}^{-1}, \: k \in \nat,
\end{align}
Similarly to how we obtained \eqref{eq thm alternative help}, we hence conclude that, for any $k \in \nat$,
\begin{align}\label{eq thm alternative help24}
\big| P_{\bP}\left(\eta_k^{EP} \in B\right) - \bP\left(B\right) \big| \leq M_2 P_{\bP}\left(\eta_k^{EP} \in B\right), \quad \forall \: B \in \mathcal{F}_{\geq0}.
\end{align}
From this estimate, and using the same argument as for the proof of the first part, we hence conclude that $\bP \ll \bQ$ and that $\frac{ d\bP|_{\mathcal{F}_{\geq0}}}{d\bQ|_{\mathcal{F}_{\geq0}}} \leq M_2$.
\end{proof}

\subsection{Strong disagreement percolation}\label{sec SDP}

\subsubsection{Basic disagreement percolation}\label{subsection:basicdisagreementpercolation}
For simplicity we assume that $E=\{0,1\}$ and that the environment $(\eta_t)$ is a translation invariant nearest neighbour probabilistic cellular automaton (PCA). Further, let $c_i(\eta):=\bP_\eta(\eta_1(o)=i), i=0,1$. By the nearest neighbour property, $c_i(\eta)=c_i(\xi)$ if $\eta(x)=\xi(x)$ for all $\abs{x}\leq1$.

The evolution of the PCA can be constructed by a sequence $(U_t(x))_{x\in\bZ^d, t \geq 1}$ of i.i.d. $[0,1]$-uniform variables in an iterative way: given $\eta_t$, $\eta_{t+1}(x):=\ind_{U_{t+1}(x)\leq c_1(\theta_{x} \eta_t)}$, $x\in\bZ^d$. Here $\theta_{x}$ is the shift on $\mathbb{Z}^{d}$, that is, for $\eta \in \Omega_0$ we have $(\theta_{x} \eta)(y) = \eta(y+x)$, $y \in \mathbb{Z}^{d}$

This construction allows for coupling of $\bP_{\eta^1}$ and $\bP_{\eta^2}$, the graphical construction coupling, by using the same set of $[0,1]$-uniform i.i.d. variables $(U_t(x))$. The starting point of disagreement percolation is the observation that the value of $\eta_{t+1}(x)$ is sometimes independent of $\eta_t$, namely if either $U_{t+1}(x)< c_-:=\inf_{\eta\in\Omega}c_1(\eta)$ or $U_{t+1}(x)>c_+:=\sup_{\eta\in\Omega}c_1(\eta)$. This allows the environment to forget information, which can be encoded in the coupling.
 The disagreement percolation is then the triple $(\eta^1_t,\eta^2_t,\xi_t)_{t\geq 0}$, where $\eta^i_t$ is constructed from the initial configuration $\eta^i$ and the $(U_t(x))$, and $\xi_t$ is given by $\xi_0(x)=\ind_{\eta^1(x)\neq \eta^2(x)}$ and for $t>0$;
\begin{align}\label{eq:DP}
 \xi_t(x) = \begin{cases}
 1,\quad & U_t(x)\in [c_-,c_+] \text{ and } \exists\;y, \abs{y-x}\leq 1 : \xi_{t-1}(y)=1;\\
 0, &otherwise.
 \end{cases}
\end{align}
The name disagreement percolation comes from the fact that $\xi_t(x)=0$ implies $\eta^1_t(x)=\eta^2_t(x)$ and $(\xi_t)$ is a directed site percolation process with percolation parameter $p=c_+-c_-$. We denote the law of this so constructed triple $(\eta^1_t,\eta^2_t,\xi_t)_{t\geq 0}$  by $\hP_{\eta^1,\eta^2}$.

\begin{defn}
If $p=c_+-c_-<p_c$, where $p_c$ is the critical value of directed site percolation in $\bZ^d$, then we say that the disagreement percolation $\hP$ is subcritical.
\end{defn}

\begin{rem}
This coupling can be improved by looking at more information. For example the site percolation model does not use the total number of neighbours which satisfy $\xi_{t-1}(y)=1$, only that the indicator that this number is positive. By taking this information into account when deciding based on whether $\xi_t(x)$ should be 1 or 0 the range of PCA where the disagreement percolation is subcritical can be extended. 
\end{rem}

\begin{rem}
If the disagreement percolation coupling is subcritical, then necessarily there is a uniquely ergodic measure for the process $(\eta_t)$, as follows by standard coupling arguments and comparison with subcritical directed site percolation.
\end{rem}

\subsubsection{Disagreement percolation and backward cones}\label{subsection:conedisagreementpercolation}
The disagreement percolation we introduced in the previous subsection is a way to control the influence of the initial configuration on the future, by giving an upper bound on the space-time points which depend on differences in the initial configurations. In the context of this article we want something slightly different, namely to control the influence of a backwards cone. With this in mind we construct a different version of the disagreement percolation coupling.

Denote by $\bP^{-\infty}_\mu$ the law of $(\eta_t)_{t\in\bZ}$ under the stationary law $\mu$ and by $(U_t(x))_{t\in\bZ,x\in\bZ^d}$ the i.i.d. uniform $[0,1]$ variables of the corresponding graphical construction. Denote by $C_{b}:=\{(x,t) \in \bZ^d\times\{...,-1,0\} : \abs{x}\leq\abs{t} \}$ the infinite backward cone with tip at $(0,0)$ and by  $\cC_b:=\sigma(\eta_t(x) : (x,t)\in C_b)=\sigma(U_t(x) : (x,t)\in C_b)$ the $\sigma$-algebra generated by the sites which lie in the cone $C_b$.

Let $A, B\in\cC_b$. We now construct the disagreement percolation process $(\eta^1_t,\eta^2_t,\xi_t)_{t\in\bZ}$ with law $\hP_{A,B}$, where $\eta^1$ has law $\bP^{-\infty}_\mu(\cdot|A)$ and $\eta^2$ has law $\bP^{-\infty}_\mu(\cdot|B)$. The idea is almost the same as in Subsection \ref{subsection:basicdisagreementpercolation}, the only difference is on the cone $C_b$. On $C_b$, we draw $(\eta^1_t(x))_{(x,t)\in C_b}$ from $\bP^{-\infty}_\mu(\cdot|A)$, independently $(\eta^2_t(x))_{(x,t)\in C_b}$ from $\bP^{-\infty}_\mu(\cdot|B)$, and set $\xi_t(x)=1$ for $(x,t)\in C_b$. Outside $C_b$, $(\eta_t^1,\eta^2,\xi)$ evolves like the basic disagreement percolation coupling by using the same $(U_t(x))_{(x,t)\in C_b^c}$. As the evolution outside $C_b$ is the same as the basic disagreement percolation, the definition of subcriticality remains the same.

\begin{lem}
Suppose the disagreement percolation is subcritical. Then the environment satisfies \eqref{eq bcm}.
\end{lem}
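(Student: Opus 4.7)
The plan is to apply the cone-based disagreement percolation coupling $\hP_{A,\Omega}$ of Subsection~\ref{subsection:conedisagreementpercolation} to compare $\bP(\cdot\mid A)$ with $\bP$. Since any $A\in\mathcal{A}_{-\infty}$ is measurable with respect to the $\sigma$-algebra of a backward cone $C_b^R$ of slope $R$ (the range of the random walker) rather than the slope-$1$ cone $C_b$, I would first re-run the construction with $C_b^R$ in place of $C_b$: on $C_b^R$ sample $(\eta^1_t(x))$ from $\bP(\cdot\mid A)$ and $(\eta^2_t(x))$ independently from $\bP$ and set $\xi\equiv 1$ there, and outside $C_b^R$ use the common variables $(U_t(x))$, so that $\xi$ evolves as directed site percolation with parameter $p=c_+-c_-<p_c$ and $\eta^1_t(x)\neq\eta^2_t(x)$ still forces $\xi_t(x)=1$.

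The second step is the standard coupling inequality: for $B\in\mathcal{F}_{\Lambda(l)}$,
\begin{align*}
|\bP(B\mid A)-\bP(B)|
\;\leq\; \hP_{A,\Omega}\bigl(\eta^1\not\equiv\eta^2\text{ on }\Lambda(l)\bigr)
\;\leq\; \sum_{(x,t)\in\Lambda(l)}\hP_{A,\Omega}(\xi_t(x)=1).
\end{align*}
I would then estimate each summand by noting that $\{\xi_t(x)=1\}$ implies the existence of an open backward directed percolation path from $(x,t)$ into $C_b^R$, and that a short geometric computation shows the length of any such path is at least $c_R(\|x\|_1+t)\geq c_R\,l$ when $(x,t)\in\Lambda(l)$ lies outside $C_b^R$. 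Subcriticality then gives $\hP_{A,\Omega}(\xi_t(x)=1)\leq Ce^{-c(\|x\|_1+t)}$ via the standard exponential decay of connectivities, with $C,c>0$ uniform in $A$.

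Finally, summing over $(x,t)\in\Lambda(l)$ and using that the number of points in $\mathbb{H}$ at $\ell_1$-distance $m$ from the origin is polynomial of degree $d$ in $m$, the function
\[ \phi(l):=\sum_{(x,t)\in\Lambda(l)}Ce^{-c(\|x\|_1+t)} \]
tends to $0$ as $l\to\infty$, uniformly in $A$ and $B$, which is exactly \eqref{eq bcm}. The main obstacle is the exponential connectivity estimate for the percolation seeded on the infinite set $C_b^R$ rather than on a single site; this is where I would invoke the subcriticality assumption $p<p_c$ together with a union bound over the potential starting sites $(y,s)\in C_b^R$ that can reach $(x,t)$, reducing matters to the standard two-point subcritical decay for directed site percolation.
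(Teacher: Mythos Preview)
Your proposal is correct and follows essentially the same approach as the paper's proof: construct the cone-based disagreement percolation coupling of $\bP(\cdot\mid A)$ and $\bP$, bound the difference $|\bP(B\mid A)-\bP(B)|$ by the probability that the disagreement process $\xi$ is nonzero somewhere in $\Lambda(l)$, and use subcriticality of directed site percolation to show this probability tends to zero uniformly in $A,B$.

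You are in fact more careful than the paper on two points. First, you correctly observe that the events in $\mathcal{A}_{-\infty}$ live in a backward cone of slope roughly $R$ rather than the slope-$1$ cone $C_b$ used in the paper's construction; for $R>1$ the paper's statement ``let $A\in\cC_b$'' does not literally cover all $A\in\mathcal{A}_{-\infty}$, and your adjustment to $C_b^R$ fixes this at no cost (the disagreement process outside the wider cone is still nearest-neighbour directed percolation with the same parameter $p$). Second, the paper simply asserts that $\hP_A(\exists (x,t)\in\Lambda(l):\xi_t(x)=1)$ is ``exponentially small in $l$'', whereas you spell out the mechanism: any open backward path from $(x,t)\in\Lambda(l)$ to the cone has length at least $c_R(\|x\|_1+t)$, so either the dual survival estimate or your union bound over entry points of $C_b^R$ gives $\hP(\xi_t(x)=1)\leq Ce^{-c(\|x\|_1+t)}$, and the sum over $\Lambda(l)$ is controlled by the polynomial growth of level sets. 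Both routes are standard for subcritical directed percolation and yield the same exponential rate in $l$.
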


\begin{proof}
Let $A\in \cC_b$ be arbitrary and let $\hP_A$ be the disagreement percolation coupling of $\bP^{-\infty}_\mu(\cdot|A)$ and $\bP^{-\infty}_\mu$. We then have for any $B \in \cF_{\Lambda(l)} $, $l \geq 1$, 
\begin{align}
\abs{\bP^{-\infty}_\mu(B|A) - \bP^{-\infty}_\mu(B)} & \leq \hP_A(\exists (x,t) \in \Lambda(l) \colon \xi_t(x)=1 ),
\end{align}
which is exponentially small in $l$ and independent of the choice of $B$ and $A$.
\end{proof}

\subsubsection{Strong disagreement percolation}
We say that $(\eta^1_t,\eta^2_t,\xi_t)$ is a strong disagreement percolation coupling if $\xi_t(x)=0$ implies $\eta^1_t(x)=\eta^2_t(x)$ and $\eta^1$ and $\xi$ are independent. This independence is a stronger assumption than regular disagreement percolation.
\begin{lem}\label{lem sdp subcritical}
Suppose $p^*:=\max((c_+-c_-)/c_+, (c_+-c_-)/(1-c_-))<p_c$. Then there exist strong versions of the disagreement percolation couplings in Sections \ref{subsection:basicdisagreementpercolation} and \ref{subsection:conedisagreementpercolation}.
\end{lem}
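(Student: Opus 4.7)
The plan is to decouple the randomness driving $\eta^1$ from the randomness driving $\xi$, at the cost of raising the percolation parameter from $c_+ - c_-$ to $p^*$. Introduce three independent families of $[0,1]$-uniform i.i.d.\ variables $(U^1_t(x))$, $(U^2_t(x))$ and $(V_t(x))$. Generate $\eta^1$ by the standard graphical construction using only the first family, $\eta^1_{t+1}(x) = \ind_{U^1_{t+1}(x) \leq c_1(\theta_x \eta^1_t)}$; generate $\xi$ from the third family alone by declaring $\xi_{t+1}(x) = 1$ iff $V_{t+1}(x) \leq p^*$ and some neighbour $y$ of $x$ carries $\xi_t(y) = 1$; and initialise $\xi_0(x) = \ind_{\eta^1_0(x) \neq \eta^2_0(x)}$. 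Finally, on sites with $\xi_{t+1}(x) = 0$ copy $\eta^2_{t+1}(x) := \eta^1_{t+1}(x)$, and on sites with $\xi_{t+1}(x) = 1$ set $\eta^2_{t+1}(x) := \ind_{U^2_{t+1}(x) \leq q_x}$ where
\begin{equation}
q_x := c_1(\theta_x \eta^1_t) + \frac{c_1(\theta_x \eta^2_t) - c_1(\theta_x \eta^1_t)}{p^*}.
\end{equation}

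Three properties must then be verified. Agreement on $\{\xi = 0\}$ is built into the rule (on sites without an active $\xi_t$-neighbour the surrounding configurations of $\eta^1_t$ and $\eta^2_t$ agree, so their transition probabilities coincide and the copy rule is consistent). Independence of $\eta^1$ and $\xi$ is immediate: since the initial data is deterministic, $\eta^1$ is a measurable function of $(U^1_t(x))$ alone while $\xi$ is a measurable function of $(V_t(x))$ alone, and the two families are independent. The only non-trivial point is that $\eta^2$ has the correct law $\bP_{\eta^2_0}$: conditionally on the past $(\eta^1_{\leq t}, \eta^2_{\leq t}, \xi_{\leq t})$, the choice of $q_x$ gives exactly $(1-p^*) c_1(\theta_x \eta^1_t) + p^* q_x = c_1(\theta_x \eta^2_t)$, so that $\eta^2_{t+1}(x)$ has the right conditional transition probability, and the site-updates $(\eta^2_{t+1}(x))_x$ are conditionally independent across $x$; induction on $t$ then yields the correct marginal law of $\eta^2$.

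The main obstacle is the feasibility constraint $q_x \in [0,1]$ uniformly over $c_1(\theta_x \eta^i_t) \in [c_-, c_+]$. The extreme configurations $(c_1(\theta_x \eta^1_t), c_1(\theta_x \eta^2_t)) = (c_+, c_-)$ and $(c_-, c_+)$ yield, respectively, the sharp inequalities $p^* \geq (c_+ - c_-)/c_+$ (for $q_x \geq 0$) and $p^* \geq (c_+ - c_-)/(1-c_-)$ (for $q_x \leq 1$), which together are exactly the hypothesis on $p^*$. Subcriticality of $\xi$ is then automatic via stochastic comparison with oriented Bernoulli site percolation of parameter $p^* < p_c$, matching the notion of subcriticality used in Section~\ref{subsection:basicdisagreementpercolation}.

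For the cone variant the only modification concerns the initial data inside $C_b$: draw $(\eta^1_t(x))_{(x,t)\in C_b}$ from $\bP^{-\infty}_\mu(\cdot \mid A)$ and, independently, $(\eta^2_t(x))_{(x,t)\in C_b}$ from $\bP^{-\infty}_\mu(\cdot \mid B)$, set $\xi_t(x) = 1$ for $(x,t) \in C_b$, and run the identical site-by-site recipe outside $C_b$ using the three independent families $(U^1_t(x))$, $(U^2_t(x))$, $(V_t(x))$. Independence of $\eta^1$ and $\xi$ on $C_b$ is trivial because $\xi$ is deterministic there and the two $\eta$-slices are drawn from independent sources; outside $C_b$ the same inductive argument as in the basic case propagates the independence of $\eta^1$ and $\xi$ together with the correctness of the marginals of $\eta^1$ and $\eta^2$.
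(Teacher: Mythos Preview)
Your proposal is correct and essentially identical to the paper's proof: three independent uniform families drive $\eta^1$, $\eta^2$ and $\xi$ separately, with $\eta^2$ copied from $\eta^1$ on $\{\xi=0\}$ and resampled with threshold $q_x$ on $\{\xi=1\}$; your $q_x = c_1(\theta_x\eta^1_t) + (c_1(\theta_x\eta^2_t)-c_1(\theta_x\eta^1_t))/p^*$ is algebraically the same quantity as the paper's $\bigl(c_1(\theta_{-x}\eta^2_{t-1})-(1-p^*)c_1(\theta_{-x}\eta^1_{t-1})\bigr)/p^*$. You spell out the feasibility check $q_x\in[0,1]$ and the cone variant in more detail than the paper does, but the construction is the same.
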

\begin{proof}
The basic concept of the construction is similar to the regular disagreement percolation. The difference is that we no longer use a single $U_t(x)$ to build the processes $(\eta_t^1(x),\eta_t^2(x),\xi_t(x))$ from $(\eta_{t-1}^1,\eta_{t-1}^2,\xi_{t-1})$. Instead we take three $[0,1]$-uniform i.i.d random variables $(U_t(x)^1,U_t(x)^2,U_t(x)^3)$. We then set 
\mathtoolsset{showonlyrefs=false}
\begin{align}
\begin{aligned}\label{eq:sdp}
\eta^1_t(x)&=\ind_{U^1_t(x)\leq c_1(\theta_{-x}\eta^1_{t-1})};	\\
\xi_t(x) &=\ind_{U^3_t(x)\leq p^*} \ind_{\exists y,\abs{y-x}\leq 1: \xi_{t-1}(y)=1};	\\
\eta^2_t(x) &=\begin{cases}
\eta^1_t(x), \quad &\xi_t(x)=0;\\
1, &U^2_t(x)\leq \frac{c_1(\theta_{-x}\eta^2_{t-1})-(1-p^*)c_1(\theta_{-x}\eta^1_{t-1})}{p^*}\text{ and }\xi_t(x)=1;\\
0,&otherwise.
\end{cases}
\end{aligned}
\end{align}
\mathtoolsset{showonlyrefs=true}
The choice of $p^*$ guarantees that $\frac{c_1(\theta_{-x}\eta^2_{t-1})-(1-p^*)c_1(\theta_{-x}\eta^1_{t-1})}{p^*}\in [0,1]$, and a direct computation shows that the probability that $\eta^2_t(x)=1$ is $c_1(\theta_{-x}\eta^2_{t-1})$.
\end{proof}

\subsubsection{Proof of Theorem \ref{thm:sdp} and Corollary \ref{cor SDP}}

\begin{proof}[Proof of Theorem \ref{thm:sdp}]
The proof is based on a coupling argument. Let 
\begin{align}
C_{-k}:=\{(x,t)\in\bZ^d\times\{...,-k-1,-k\}: \abs{x-\gamma_{-k}}\leq \abs{t-k} \}
\end{align}
be the infinite backwards cone with tip at $(\gamma_{-k},-k)$. We construct iteratively the random variables $(\eta_t(x)^{1,m}, \eta_t(x)^{2,m},\xi_t(x)^m) \in E\times E\times \{0,1\}, (x,t)\in C_{-k-1+m}$, and $H^m \in \bN$, and denote their law by $\tP^m$. We start with $\eta_t(x)^{1,0}$ and $\eta_t(x)^{2,0},\xi_t(x)^0$ chosen independently from $\bP^{-\infty}_\mu\left(\cdot \right)$ and $\bP^{-\infty}_\mu\left(\cdot \; \middle|\; A_{-k-1}^{-k-1} \right)$ restricted to the cone $C_{-k-1+m}$ and set $\xi_t^0(x)=1$ for $(x,t)\in C_{-k-1}$, and $H^0=0$.

Given $\tP^m$, let $\tP^{m,*}$ be the extension of $\tP^m$ to the cone $C_{-k+m}$ based on the strong disagreement percolation coupling, that is 
\begin{align}
(\eta^{1,m}_t(x),\eta^{2,m}_t(x),\xi^m_t(x))_{(x,t)\in C_{-k+m}\setminus C_{-k-1+m}}
\end{align}
 are distributed according to the evolution described in \eqref{eq:sdp}.

The general strategy is as follows: We want to condition the measure $\tP^{m,*}$ on the event $\{\eta^{1,m} \in A_{-k}^{-k+m}, \eta^{2,m} \in A_{-k-1}^{-k+m}\}$. Observe that, on $\xi_{-k+m}(\gamma_{-k+m})=0$, the events $\eta^{1,m} \in A_{-k+m}^{-k+m}$ and $\eta^{2,m} \in A_{-k+m}^{-k+m}$ are equivalent. This is the good case. The bad case is when $\xi_{-k}(\gamma_{-k})=1$. In this event, we simply restart the coupling procedure by coupling $\bP^{-\infty}_\mu\left(\cdot \; \middle|\; A_{-k-1}^{-k} \right)$ and $\bP^{-\infty}_\mu\left(\cdot \; \middle|\; A_{-k}^{-k} \right)$ independently. The role of $H^m$ is to keep track of the number of iterations since the last time we had to reset and try again. 

Define 
\begin{align}
q_{m}(h) := &\tP^{m,*}\left(\xi^{m}_{-k+m}(\gamma_{-k+m})=0 \;\middle|\; \eta^{1,m} \in A_{-k}^{-k+m}, H^m=h \right)\\
&\wedge \tP^{m,*}\left(\xi^{m}_{-k+m}(\gamma_{-k+m})=0 \;\middle|\; \eta^{2,m} \in A_{-k-1}^{-k+m}, H^m=h \right);\\
Q^1_m := &\tP^{m,*}\left(\xi^{m}_{-k+m}(\gamma_{-k+m})=0 \;\middle|\; \eta^{1,m} \in A_{-k}^{-k+m} \right) - \sum_{h\geq 0}q_m(h);\\
Q^2_m := &\tP^{m,*}\left(\xi^{m}_{-k+m}(\gamma_{-k+m})=0 \;\middle|\; \eta^{2,m} \in A_{-k-1}^{-k+m} \right) - \sum_{h\geq 0}q_m(h) .
\end{align}
Let $B^1, B^2, D \in \sigma(C_{-k+m})$. We now define $\tP^{m+1}$ based on $\tP^m$ by
\begin{align}
&\tP^{m+1}\left(\eta^{1,m+1}\in B^1, \eta^{2,m+1}\in B^2, \xi^{m+1}\in D, H^{m+1}=h+1\right)\\
&:=q_m(h)\tP^{m,*}\left( \eta^{1,m}\in B^1, \eta^{2,m}\in B^2, \xi^{m}\in D, H^{m}=h \;\mid \;  \right. \\
& \quad \qquad \qquad\qquad\left. \eta^{1,m}\in A_{-k+m}^{-k+m}, \eta^{2,m}\in A_{-k+m}^{-k+m}, \xi^m_{-k+m}(\gamma_{-k+m})=0\right)\end{align}
and
\begin{align}
&\tP^{m+1}\left(\eta^{1,m+1}\in B^1, \eta^{2,m+1}\in B^2, \xi^{m+1}\in D, H^{m+1}=0\right)\\
&:=\left[
\tP^{m,*}\left( \eta^{1,m}\in B^1 \;\mid\; \eta^1_{-k+m}\in A_{-k}^{-k+m}, \xi^m_{-k+m}(\gamma_{-k+m})=0\right) Q^1_m
  \right.   
\\&+ \left. \tP^{m,*}\left(\eta^{1,m}\in B^1, \xi^m_{-k+m}(\gamma_{-k+m})=1\;\middle|\; \eta^{1,m}\in A_{-k}^{-k+m}\right)  \right]  \frac{1}{1-\sum_{h\geq 0}q_m(h) }
\\&\cdot\left[\tP^{m,*}\left( \eta^{2,m}\in B^2 \;\middle|\; \eta^{1,m}_{-k+m}\in A_{-k}^{-k+m}, \xi^m_{-k+m}(\gamma_{-k+m})=0\right) Q^2_m \right. 
\\&+\left.  \tP^{m,*}\left(\eta^{2,m}\in B^2, \xi^m_{-k+m}(\gamma_{-k+m})=1\;\middle|\; \eta^{2,m}\in A_{-k}^{-k+m}\right)  \right]\ind_{1 \text{ on }C_{-k+m}\in D}.
\end{align}
A direct computation shows that $\tP^{m+1}(\eta^{1,m+1}\in B^1)=\bP^{-\infty}_\mu(B^1 \;|\; A_{-k}^{-k+m})$ and $\tP^{m+1}(\eta^{2,m+1}\in B^2)=\bP^{-\infty}_\mu(B^2 \;|\; A_{-k}^{-k+m})$
assuming that $\tP^{m}$ satisfies the corresponding properties. Therefore $\tP^{k}$ extended to all space-time points using the strong disagreement percolation construction \eqref{eq:sdp} is a coupling of $\bP^{-\infty}_\mu(\cdot|A_{-k}^{-1})$ and $\bP^{-\infty}_\mu(\cdot|A_{-k-1}^{-1})$. We call this coupling $\tP^*$ and drop the super-index $k$ from the random variables. By construction of the coupling,
\begin{align}
\tP^*(\cdot|H=h)=\hP_{A_{-k}^{-1-h},A_{-k-1}^{-1-h}}\left(\cdot\;\middle|\; \eta^1\in A_{-h}^{-1}, \xi_{-i}(\gamma_{-i})=0, i=1,...,h \right)
\end{align}
where $\hP_{A_{-k}^{-1-h},A_{-k-1}^{-1-h}}$ is the strong disagreement percolation coupling starting from the cone $C_{-1-h}$. In particular, $\eta^{1}$ and $\xi$ are independent. Denote by $G:=\{\xi_0(x)=0\ \forall\;x\in \bZ^d\}$ the good event that the disagreement process has become extinct by time 0.
We have
\begin{align}
\frac{\bP^{-(k+1)}_\mu\left(B \; \middle|\; A_{-k-1}^{-1} \right)}{\bP^{-k}_\mu\left(B \; \middle|\; A_{-k}^{-1} \right)}
& \geq \frac{\tP^{*}\left(\eta^2_0\in B , G \right)}{\tP^{*}\left(\eta^1_0\in B\right)}	
= \frac{\tP^{*}\left(\eta^1_0\in B , G \right)}{\tP^{*}\left(\eta^1_0\in B\right)}
= \tP^*(G).
\end{align}
Reversing the roles of $\eta^1$ and $\eta^2$, we also have
\begin{align}
\frac{\bP^{-(k+1)}_\mu\left(B \; \middle|\; A_{-k-1}^{-1} \right)}{\bP^{-k}_\mu\left(B \; \middle|\; A_{-k}^{-1} \right)} \leq \tP^*(G)^{-1}.
\end{align}
Since $\tP(G^c | H=h)$ is exponentially small in $h$, we have completed the proof once we show that $\tP^*(H\leq h)$ is exponentially small in $k$ for a fixed $h$. To see this, we look at $H^m$ in more detail. Since $H^m$ either increases by one or is reset to 0, $(H^m)$ is a time-inhomogeneous house-of-cards process with transition probability $\bP(H^{m+1}=h+1|H^m=h)=q_m(h)$. We have that $q_m(h)$ equals
\begin{align}
 &\frac{\tP^{m,*}\left(\xi^m_{-k+m}(\gamma_{-k+m})=0, \eta^{1,m}\in A_{-k+m}^{-k+m} \;\middle|\;H^m=h\right)}{\max\left(\tP^{m,*}(\eta^{1,m}\in A_{-k+m}^{-k+m} \;\middle|\;H^m=h),\tP^{m,*}(\eta^{2,m}\in A_{-k+m}^{-k+m} \;\middle|\;H^m=h)\right)}\\
= &\frac{\tP^{m,*}\left(\xi^m_{-k+m}(\gamma_{-k+m})=0 \;\middle|\;H^m=h\right)}{\max\left(1, \tP^{m,*}(\eta^{2,m}\in A_{-k+m}^{-k+m} \;\middle|\;H^m=h)(\tP^{m,*}(\eta^{1,m}\in A_{-k+m}^{-k+m} \;\middle|\;H^m=h) )^{-1}\right)}\\
= &\frac{\tP^{m,*}\left(\xi^m_{-k+m}(\gamma_{-k+m})=0 \;\middle|\;H^m=h\right)}{\max\left(1, 
\tP^{m,*}(\xi^m_{-k+m}(\gamma_{-k+m})=0 \;\middle|\;H^m=h) + \tP^{m,*}(\eta^{2,m} /\eta^{1,m}) \right)}
\\ \geq & \frac{\tP^{m,*}\left(\xi^m_{-k+m}(\gamma_{-k+m})=0 \;\middle|\;H^m=h\right)}{1+ (\inf_{i,\eta}c_i(\eta))^{-1}\left(1-
\tP^{m,*}\left(\xi_{-k+m}(\gamma_{-k+m})=0\;\middle|\;H^m=h\right)\right)},
\end{align}
where in the last equality before the inequality we denoted by
\begin{align}
\tP^{m,*}\left(\eta^{2,m} /\eta^{1,m}\right) :=
\frac{\tP^{m,*}\left(\eta^{2,m}\in A_{-k+m}^{-k+m}, \xi^m_{-k+m}(\gamma_{-k+m})=1\;\middle|\;H^m=h\right)}{\tP^{m,*}\left(\eta^{1,m}\in A_{-k+m}^{-k+m} \;\middle|\;H^m=h\right)}.
\end{align}
Note that $\inf_{i,\eta}c_i(\eta)>0$, since $\inf_{i,\eta}c_i(\eta)=0$ implies $p^*=1>p_c$. Conditioned on $H^m=h$ the probability of $\xi^m_{-k+m}(\gamma_{-k+m})=0$ is larger than the probability that there is no percolation path from $C_{-k+m-h-1}$ to $(\gamma_{-k+m},-k+m)$, which converges exponentially fast to 1 in $h$.
Therefore there are constants $c_1,c_2>0$ so that $1-q_m(h)\leq c_1e^{-c_2 h}$. We also have $q_m(h)\geq (1-p^*)\inf_{i,\eta}c_i(\eta)$. Those two facts imply that $\tP^*(H\leq h)\leq c_3e^{-c_4(k-c_5 h)}$ for some constants $c_3,c_4,c_5>0$.  
\end{proof}

Corollary \ref{cor SDP} follows as a direct consequence of Theorems \ref{thm alternative EP} and  \ref{thm:sdp}.
\begin{proof}[Proof of Corollary \ref{cor SDP}]
Let $l \in \nat$ and consider $A_l \in \mathcal{A}_{-l}^{-1}$. By telescoping, for any $B \in \mathcal{F}_{=0}$, we have by Theorem \ref{thm:sdp} that 
\begin{align}
\left| \frac{\bP_\mu (B \mid A_l)}{\mu(B)}^{\pm} -1 \right| \leq \left[\prod_{i=1}^k (1+C\delta^i)\right]. 
\end{align}
Setting $M:=\prod_{i=1}^\infty (1+C\delta^i)$, and noting that the statement (and proof) of Theorem \ref{thm alternative EP} holds when $\mathcal{F}_{\geq0}$ is replaced by $\mathcal{F}_{=0}$, we  conclude the proof.
\end{proof}



\subsection*{Acknowledgement}
The authors thanks Markus Heydenreich and Noam Berger for useful discussions and comments, and Matthias Birkner for careful proof reading. S.A. Bethuelsen thanks LMU Munich for hospitality during the writing of the paper.
S.A. Bethuelsen was supported by the Netherlands Organization for Scientific Research (NWO).



\end{document}